\renewcommand\eqref[1]{(\ref{#1})} 
 \newtheorem{thm}{Theorem}[section]
 \newtheorem{cor}[thm]{Corollary}
 \newtheorem{lem}[thm]{Lemma}
 \newtheorem{prop}[thm]{Proposition}
 \theoremstyle{definition}
 \theoremstyle{remark}
 \newtheorem{rem}[thm]{Remark}
 \numberwithin{equation}{section}
\newcommand{\half}{\frac{1}{2}}
\newcommand{\ene}{\mathbb{N}}
\newcommand{\er}{\mathbb{R}}
\newcommand{\ce}{\mathbb{C}}
\newcommand{\tn}{\mathbb{T}^n}
\newcommand{\T}{\mathbb{T}^1}
\newcommand{\efee}{\mathcal{F}}
\newcommand{\efeM}{{\mathcal{F}_M}}
\newcommand{\efeG}{{\mathcal{F}_G}}
\newcommand{\efemo}{\mathcal{F}_M\otimes\overline{\mathcal{F}_M}}
\newcommand{\bi}{\begin{itemize}}
\newcommand{\cinfm}{{C}^{\infty}(M)}
\newcommand{\ei}{\end{itemize}}
\newcommand{\be}{\begin{enumerate}}
\newcommand{\ee}{\end{enumerate}}
\newcommand{\beq}{\begin{equation}}
\newcommand{\eq}{\end{equation}}
\newcommand{\cdxi}{\ce^{d_{\xi}\times d_{\xi}}}
\newcommand{\Dcal}{\mathcal{D}}
\def\p#1{{\left({#1}\right)}}
\def\jp#1{{\left\langle{#1}\right\rangle}}
\def\Op{{{\rm Op}}}
\def\En{{E_{o}}}
\def\Hcal{{\mathcal H}}
\DeclareMathOperator{\Tr}{Tr}
\def\Gh{{\widehat{G}}}
\def\HS{{\mathtt{HS}}}
\def\Rn{{{\mathbb R}^n}}
\def\Tn{{{\mathbb T}^n}}
\def\Zn{{{\mathbb Z}^n}}
\def\T{{{\mathbb T}^1}}
\def\N{{{\mathbb N}}}
\def\C{{{\mathbb C}}}
\def\SU2{{{\rm SU(2)}}}
\def\lapsu2{{{\mathcal L}_\SU2}}
\def\Op{\text{\rm Op}}
\begin{document}

%
%
%
%
%
%
%
%
%
\title[Fourier multipliers, symbols and nuclearity on compact manifolds]
 {Fourier multipliers, symbols and nuclearity on compact manifolds}

\author[Julio Delgado]{Julio Delgado}

\address{%
Department of Mathematics\\
Imperial College London\\
180 Queen's Gate, London SW7 2AZ\\
United Kingdom
}
\email{j.delgado@imperial.ac.uk}

\thanks{The first author was supported by Marie Curie IIF 301599 and by
the Leverhulme Grant RPG-2014-02. 
The second author was supported by EPSRC grant EP/K039407/1. 
No new data was collected or generated during the course of the research.}
\author[Michael Ruzhansky]{Michael Ruzhansky}

\address{%
Department of Mathematics\\
Imperial College London\\
180 Queen's Gate, London SW7 2AZ\\
United Kingdom
}

\email{m.ruzhansky@imperial.ac.uk}

\subjclass[2010]{Primary 35S05, 58J40; Secondary 22E30, 47B06, 47B10.}

\keywords{Compact manifolds, pseudo-differential operators, eigenvalues, Schatten classes, nuclearity, trace formula. }

\date{\today}
\begin{abstract}
The notion of invariant operators, or Fourier multipliers, is discussed for densely
defined operators on Hilbert spaces, with respect to a fixed partition of the space
into a direct sum of finite dimensional subspaces. As a consequence,
given a compact manifold $M$ endowed with a positive measure, we introduce a notion 
of the operator's full symbol adapted to the Fourier analysis relative to a fixed 
elliptic operator $E$. 
We give a description of Fourier multipliers, or of operators invariant relative to $E$.
We apply these concepts to study Schatten classes of operators on $L^2(M)$
and to obtain a formula for the trace of trace class operators.
We also apply it to provide conditions for operators between $L^{p}$-spaces
to be $r$-nuclear in the sense of Grothendieck. 
\end{abstract}

\maketitle

\section{Introduction}
Let $M$ be a closed manifold (i.e. a  compact smooth manifold without boundary) of 
dimension $n$ endowed with a positive measure $dx$. 
Given an elliptic positive pseudo-differential operator $E$ of order $\nu$ on $M$, 
by considering an orthonormal basis consisting of eigenfunctions of $E$ we will 
associate a discrete Fourier analysis to the operator $E$ in the sense introduced 
by Seeley (\cite{see:ex}, \cite{see:exp}). 
This analysis allows us to introduce further a notion of invariant operators and of 
matrix-symbols corresponding to those operators. 
The operators on $M$ will be then analysed in terms of the corresponding symbols 
relative to the operator $E$. 

As a general framework, we first discuss invariant operators, or Fourier 
multipliers in a general Hilbert space $\Hcal$. This notion is based on a 
partition of $\Hcal$ into a direct sum of finite dimensional subspaces, so that
a densely defined operator on $\Hcal$ can be decomposed as acting in these
subspaces. There are two main examples of this construction discussed in the 
paper: operators on $\Hcal=L^{2}(M)$ for a compact manifold $M$ as well as 
operators on $\Hcal=L^{2}(G)$ for a compact Lie group $G$. The difference in 
approaches to these settings is in the choice of partitions of $\Hcal$ into direct sums of
subspaces: in the former case they are chosen as eigenspaces of a fixed
elliptic pseudo-differential operator 
on $M$ while in the latter case they are chosen as linear spans 
of matrix coefficients of inequivalent irreducible unitary representations of $G$.

We note that for some results, the self-adjointness and ellipticity of $E$ can be dropped,
see \cite{Ruzhansky-Tokmagambetov:IMRN}.

We give applications of these notions to the derivation of conditions 
characterising those invariant operators on
$L^{2}(M)$ that belong to Schatten classes.
Furthermore, we also give conditions for nuclearity on $L^{p}$-spaces and,
more generally, for the $r$-nuclearity of operators. While the theory 
of $r$-nuclear operators in general Banach spaces has been developed by
Grothendieck \cite{gro:me} with numerous further advances
(e.g. in \cite{hi:pn,ko:pn,Oloff:pnorm,piet:r,Reinov}),
in this paper we give conditions in terms of
symbols for operators to be $r$-nuclear from $L^{p_{1}}(M)$ to $L^{p_{2}}(M)$
for $1\leq p_{1},p_{2}<\infty$ and $0<r\leq 1$. Consequently, we determine
relations between $p_{1}, p_{2}, r$ and $\alpha$ ensuring that the powers
$(I+E)^{-\alpha}$ are $r$-nuclear.
Trace formulas are also obtained relating operator traces to expressions involving
their symbols. 

In the recent work \cite{dr:suffkernel} the authors found sufficient conditions for 
operators to belong to Schatten classes $S_{p}$ on compact manifolds in terms of 
their Schwartz integral kernels. For $p<2$, it is customary to impose regularity
conditions on the kernel because there are counterexamples 
to conditions formulated only in terms of the integrability of kernels. Such examples
go back to Carleman's work \cite{car:ex} and their relevance to Schatten classes
has been discussed in \cite{dr13a:nuclp}.
A characteristic feature of conditions of this paper is that no regularity is assumed
neither on the symbol nor on the kernel. 
In the case of compact Lie groups, our results extend results on
Schatten classes and on $r$-nuclear operators on $L^p$ spaces that have been
obtained in  \cite{dr13:schatten} and \cite{dr13a:nuclp}. We show this by relating
the symbols introduced in this paper to matrix-valued symbols on compact
Lie groups developed in 
\cite{rt:groups} and in \cite{rt:book}.

Schatten classes of pseudo-differential operators in the setting of the 
Weyl-H\"or\-man\-der calculus have been considered in 
\cite{Toft:Schatten-AGAG-2006}, \cite{Toft:Schatten-modulation-2008}, 
\cite{Buzano-Nicola:2004},
\cite{Buzano-Nicola:powers-hypo-JFA-2007},
\cite{Buzano-Toft:Schatten-Weyl-JFA-2010}. Conditions for symbols of
lower regularity we given in \cite{sob:sch}. For the global analysis of
pseudo-differential operators on $\Rn$ see \cite{Boggiatto-Buzano-Rodino:bk},
as well as \cite[Chapter 4]{Nicola-Rodino:bk} also for the basic general introduction to
Schatten classes.

To formulate the notions more precisely, let $\Hcal$ be a complex Hilbert space and let 
$T:\Hcal\rightarrow \Hcal$ be a linear compact operator. If we denote by 
$T^*:\Hcal\rightarrow \Hcal$ the adjoint of  $T$, then the linear operator 
$(T^*T)^\half:\Hcal\rightarrow \Hcal$ is positive and compact. 
Let $(\psi_k)_k$ be an orthonormal basis for $\Hcal$ consisting of eigenvectors of 
$|T|=(T^*T)^\half$, and let $s_k(T)$ be the eigenvalue corresponding to the eigenvector 
$\psi_k$, $k=1,2,\dots$. The non-negative numbers $s_k(T)$, $k=1,2,\dots$, 
are called the singular values of $T:\Hcal\rightarrow \Hcal$. 
If $0<p<\infty$ and the sequence of singular values is $p$-summable, then $T$ 
is said to belong to the Schatten class  ${S}_p(\Hcal)$, and it is well known that each 
${S}_p(\Hcal)$ is an ideal in $\mathscr{L}(\Hcal)$. 
If $1\leq p <\infty$, a norm is associated to ${S}_p(\Hcal)$ by
 \[
 \|T\|_{S_p}=\left(\sum\limits_{k=1}^{\infty}(s_k(T))^p\right)^{\frac{1}{p}}.
 \] 
If $1\leq p<\infty$ 
 the class $S_p(\Hcal)$ becomes a Banach space endowed by the norm $\|T\|_{S_p}$. 
 If $p=\infty$ we define $S_{\infty}(\Hcal)$ as the class of bounded linear operators on $\Hcal$, 
 with 
$\|T\|_{S_\infty}:=\|T\|_{op}$, the operator norm.  For the Schatten class $S_2$ we will sometimes write $\|T\|_{\HS}$ instead of $\|T\|_{S_2}$. In the case $0<p<1$  the quantity $\|T\|_{S_p}$ only defines a  quasi-norm, and $S_p(\Hcal)$ is also complete. 
The space $S_1(\Hcal)$ is known as the {\em trace class} and an element of
 $S_2(\Hcal)$ is usually said to be a {\em Hilbert-Schmidt} operator. 
 For the  basic theory of Schatten classes we refer the reader to 
 \cite{gokr}, \cite{r-s:mp}, \cite{sim:trace}, \cite{sch:id}. 

It is well known that the class $S_2(L^2)$ is characterised by the   square integrability of the corresponding integral kernels, however, kernel estimates of this type are not effective
for classes $S_p(L^2)$ with $p<2$. This is explained by a classical Carleman's example
\cite{car:ex} on the summability of Fourier coefficients of continuous functions 
(see \cite{dr13a:nuclp} for a complete explanation of this fact). 
This obstruction explains the relevance of symbolic Schatten criteria and here we will clarify 
the advantage of the symbol approach with respect to this obstruction.
With this approach, no regularity of the kernel needs to be assumed.

In Section \ref{SEC:Lie-groups} we discuss the relation of our approach to that of
the global analysis on compact Lie groups.
In particular, in the case of compact
Lie groups the Fourier coefficients can be arranged into a (square)
matrix rather than in a column
leading to several simplifications. On general compact manifolds, 
this is not possible since the multiplicities
$d_{j}$ do not need to be all squares of integers.

We introduce $\ell^{p}$-style norms on the space of symbols $\Sigma$,
yielding discrete spaces $\ell^{p}(\Sigma)$ for $0<p\leq\infty$, normed
for $p\geq 1$.
Denoting by $\sigma_{T}$ the matrix symbol of an invariant operator $T$ provided by
Theorem \ref{THM:inv}, Schatten classes of invariant operators 
on $L^{2}(M)$ can be characterised
concisely by conditions
\begin{equation}\label{EQ:thm-L2-1}
T\in {\mathscr L}(L^{2}(M)) \Longleftrightarrow \sigma_{T}\in \ell^{\infty}(\Sigma),
\end{equation}
and for $0<p<\infty$,
\begin{equation}\label{EQ:thm-Sp-1}
T\in S_{p}(L^{2}(M)) \Longleftrightarrow \sigma_{T}\in \ell^{p}(\Sigma),
\end{equation}
see \eqref{EQ:thm-L2} and \eqref{EQ:thm-Sp}. 
Here, the condition that $T$ is invariant will mean that $T$ is strongly 
commuting with $E$ (see Theorem \ref{THM:inv}). 
On the level of the Fourier transform this means that
$$\widehat{Tf}(\ell)=\sigma(\ell) \widehat{f}(\ell)$$ for a family of
matrices $\sigma(\ell)$, i.e. $T$ assumes the familiar form of a 
Fourier multiplier.

In Section \ref{SEC:abstract} in 
Theorem \ref{THM:inv-rem} we discuss the abstract notion of symbol for
operators densely defined in a general Hilbert space $\Hcal$, and give several alternative
formulations for invariant operators, or for Fourier multipliers, relative to a fixed
partition of $\Hcal$ into a direct sum of finite dimensional subspaces,
$$\Hcal=\bigoplus_{j} H_{j}.$$
Consequently, in Theorem \ref{L2-abstract} 
we give the necessary and sufficient condition for the bounded extendability
of an invariant operator to ${\mathscr L}(\Hcal)$ in terms of its symbol, and in 
Theorem \ref{schchr-abstract} the necessary and sufficient condition for the
operator to be in Schatten classes $S_{r}(\Hcal)$ for $0<r<\infty$, as well
as the trace formula for operators in the trace class $S_{1}(\Hcal)$ in terms
of their symbols. 
As our subsequent analysis relies to a large extent on properties of
elliptic pseudo-differential operators on $M$, in Sections
\ref{SEC:Fourier} and \ref{SEC:invariant} we specify this abstract analysis
to the setting of operators densely defined on $L^{2}(M)$. The main difference
is that we now adopt the Fourier analysis to a fixed elliptic positive
pseudo-differential operator $E$ on $M$, contrary to the case of 
an operator $\En\in {\mathscr L}(\Hcal)$ in Theorem \ref{THM:inv-rem2}.

The notion of invariance depends on the choice of the spaces $H_{j}$.
Thus, in the analysis of operators on $M$ we take $H_{j}$'s to be 
the eigenspaces of $E$. However, other choices are possible. For example,
for $\Hcal=L^{2}(G)$ for a compact Lie group $G$, choosing $H_{j}$'s as
linear spans of representation coefficients for inequivalent irreducible
unitary representations of $G$, we make a link to the
quantization of pseudo-differential operator on compact Lie groups
as in \cite{rt:book}. These two partitions coincide when inequivalent representations of
$G$ produce distinct eigenvalues of the Laplacian; for example, this is the case
for $G={\rm SO(3)}.$ However, the partitions are different when inequivalent
representations produce equal eigenvalues, which is the case, for example, for 
$G={\rm SO(4)}.$ For the more explicit example on $\Hcal=L^{2}(\Tn)$ on the
torus see Remark \ref{REM:torus}. A similar choice could be made in other
settings producing a discrete spectrum and finite dimensional eigenspaces,
for example for operators in Shubin classes on $\Rn$, see Chodosh 
\cite{Chodosh} for the case $n=1$.

The analogous concept to Schatten classes in the setting of Banach spaces is
the notion of $r$-nuclearity introduced by Grothendieck \cite{gro:me}.
It has applications to questions of the distribution of eigenvalues of operators
in Banach spaces. 
In the setting of compact Lie groups these applications have been 
discussed in \cite{dr13a:nuclp} and they include conclusions on the 
distribution or summability of eigenvalues of operators 
acting on $L^{p}$-spaces. Another application is the 
Grothendieck-Lidskii formula which is the formula for the trace of operators
on $L^{p}(M)$.
Once we have $r$-nuclearity, most of further arguments are then purely 
functional analytic, so they apply equally well
in the present setting of closed manifolds. Because of this we omit the 
repetition of statements and refer the reader
to \cite{dr13a:nuclp} for further such applications.

Some results of this paper have been announced in \cite{Delgado-Ruzhansky:CRAS-kernels}, so here we provide their proofs, including a correction to the formulation of
\cite[Theorem 3.1, (iv)]{Delgado-Ruzhansky:CRAS-kernels}
given by Theorem \ref{THM:inv}, (iv), of this paper.

The paper is organised as follows. 
In Section \ref{SEC:abstract} we discuss Fourier multipliers and their symbols in
general Hilbert spaces.
In Section \ref{SEC:Fourier} we associate a
global Fourier analysis to an elliptic positive pseudo-differential operator $E$ on 
a closed manifold $M$. In Section \ref{SEC:invariant} we introduce the class
of operators invariant relative to $E$ as well as their matrix-valued symbols,
and apply this to characterise invariant operators in Schatten classes in
Section \ref{SEC:Schatten-mfds}.
In Section \ref{SEC:Lie-groups} we relate the analysis developed
so far to the analysis on compact Lie groups from
\cite{rt:groups}, \cite{rt:book}, and establish formula relating their matrix symbols
in the case when $M$ is a compact Lie group.
In particular, we will see that left-invariant operators on compact Lie groups
are invariant in our sense.
In Section \ref{SEC:kernels} we analyse the integral kernels of invariant
operators on general closed manifolds. 
Finally, in Section \ref{SEC:nuclearity} we apply our analysis to study $r$-nuclear
operators on $L^{p}$-spaces. 

Throughout the paper, we denote $\N_{0}=\N\cup\{0\}$.
Also $\delta_{j\ell}$ will denote the Kronecker delta, i.e.
$\delta_{j\ell}=1$ for $j=\ell$, and $\delta_{j\ell}=0$ for $j\not=\ell$.

The authors would like to thank V\'eronique Fischer, Alexandre Kirilov, and Augusto Almeida de Moraes Wagner for comments.

\section{Fourier multipliers in Hilbert spaces}
\label{SEC:abstract}

In this section we present an abstract set up to describe what we will
call invariant operators, or Fourier multipliers, acting on a general
Hilbert space $\Hcal$. We will give several characterisations of such
operators and their symbols. Consequently, we will apply these notions
to describe several properties of the operators, in particular, their
boundedness on $\Hcal$ as well as the Schatten properties.

We note that direct integrals (sums in our case) of Hilbert spaces
have been investigated in a much greater generality, see e.g.
Bruhat \cite{Bruhat:BK-Tata-1968},
Dixmier \cite[Ch 2., \S 2]{Dixmier:bk-algebras},
\cite[Appendix]{Dixmier:bk-Cstar-algebras}. The setting required for our
analysis is much simpler, so we prefer to adapt it specifically for
consequent applications, also providing short proofs for our statements.

The main application of the constructions below will be in the setting when
$M$ is a compact manifold without boundary, $\Hcal=L^{2}(M)$ and
$\Hcal^{\infty}=C^{\infty}(M)$, which will be described in detail
in Section \ref{SEC:Fourier}. However, several facts can be more
clearly interpreted in the setting of abstract Hilbert spaces, which will
be our set up in this section. With this particular example in mind,
in the following theorem, we can
think of $\{e_{j}^{k}\}$ being an orthonormal basis given by eigenfunctions
of an elliptic operator on $M$, and $d_{j}$ the corresponding 
multiplicities. However, we allow flexibility in grouping the eigenfunctions
in order to be able to also cover the case of operators
on compact Lie groups.

\begin{thm}\label{THM:inv-rem}
Let $\Hcal$ be a complex Hilbert space and let $\Hcal^{\infty}\subset \Hcal$ be a dense
linear subspace of $\Hcal$. Let $\{d_{j}\}_{j\in\N_{0}}\subset\N$ and let
$\{e_{j}^{k}\}_{j\in\N_{0}, 1\leq k\leq d_{j}}$ be an
orthonormal basis of $\Hcal$ such that
$e_{j}^{k}\in \Hcal^{\infty}$ for all $j$ and $k$. Let $H_{j}:={\rm span} \{e_{j}^{k}\}_{k=1}^{d_{j}}$,
and let $P_{j}:\Hcal\to H_{j}$ be the orthogonal projection.
For $f\in\Hcal$, we denote $$\widehat{f}(j,k):=(f,e_{j}^{k})_{\Hcal}$$ and let
$\widehat{f}(j)\in \ce^{d_{j}}$ denote the column of $\widehat{f}(j,k)$, $1\leq k\leq d_{j}.$
Let $T:\Hcal^{\infty}\to \Hcal$ be a linear operator.
Then the following
conditions are equivalent:
\begin{itemize}
\item[(A)] For each $j\in\ene_0$, we have $T(H_j)\subset H_j$. 
\item[(B)] For each $\ell\in\ene_0$ there exists a matrix 
$\sigma(\ell)\in\ce^{d_{\ell}\times d_{\ell}}$ such that for all $e_j^k$ 
$$
\widehat{Te_j^k}(\ell,m)=\sigma(\ell)_{mk}\delta_{j\ell}.
$$
\item[(C)]  If in addition, $e_j^k$ are in the domain of $T^*$ for all $j$ and $k$, then 
for each $\ell\in\ene_0 $ there exists a matrix 
$\sigma(\ell)\in\ce^{d_{\ell}\times d_{\ell}}$ such that
 \[\widehat{Tf}(\ell)=\sigma(\ell)\widehat{f}(\ell)\]
 for all $f\in\Hcal^{\infty}.$
\end{itemize}

The matrices $\sigma(\ell)$ in {\rm (B)} and {\rm (C)} coincide.

The equivalent properties {\rm (A)--(C)} follow from the condition 
\begin{itemize}
\item[(D)] For each $j\in\ene_0$, we have
$TP_j=P_jT$ on $\Hcal^{\infty}$.
\end{itemize}
If, in addition, $T$ extends to a bounded operator
$T\in{\mathscr L}(\Hcal)$ then {\rm (D)} is equivalent to {\rm (A)--(C)}.
\end{thm}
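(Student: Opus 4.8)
The plan is to establish the cyclic chain of equivalences $(A)\Leftrightarrow(B)\Leftrightarrow(C)$, then treat $(D)$ separately, first showing $(D)\Rightarrow(A)$ unconditionally and then $(A)\Rightarrow(D)$ under the boundedness hypothesis. The key observation underlying everything is that the Fourier coefficients $\whf(j,k)=(f,e_j^k)_\Hcal$ are simply the coordinates of $f$ in the orthonormal basis, so membership in $H_\ell$ is detected by the vanishing of all coefficients $\whf(\ell',k)$ with $\ell'\neq\ell$, and the projection $P_j$ acts by $P_jf=\sum_{k=1}^{d_j}\whf(j,k)\,e_j^k$.

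For $(A)\Rightarrow(B)$, I would fix $j,k$ and apply $T$ to the basis vector $e_j^k\in H_j$; by $(A)$ the image $Te_j^k$ lies in $H_j$, so $\widehat{Te_j^k}(\ell,m)=0$ whenever $\ell\neq j$. Defining $\sigma(\ell)_{mk}:=\widehat{Te_\ell^k}(\ell,m)$ packages the remaining coefficients into a $d_\ell\times d_\ell$ matrix, and the Kronecker delta in $(B)$ records exactly the vanishing just noted. Conversely $(B)\Rightarrow(A)$ is immediate: for $v=\sum_k c_k e_j^k\in H_j$, linearity gives $\widehat{Tv}(\ell,m)=\sum_k c_k\sigma(\ell)_{mk}\delta_{j\ell}$, which vanishes for $\ell\neq j$, forcing $Tv\in H_j$. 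For the passage $(B)\Leftrightarrow(C)$, the direction $(C)\Rightarrow(B)$ follows by testing the identity $\widehat{Tf}(\ell)=\sigma(\ell)\whf(\ell)$ on $f=e_j^k$, noting $\widehat{e_j^k}(\ell)$ is the standard basis vector times $\delta_{j\ell}$. For $(B)\Rightarrow(C)$ I would expand an arbitrary $f\in\Hcal^\infty$ in the basis and use linearity of $T$ together with the componentwise formula in $(B)$; the role of the extra hypothesis that $e_j^k\in\mathrm{dom}(T^*)$ is to justify interchanging $T$ with the (possibly infinite) basis expansion of $f$, i.e. to make $\widehat{Tf}(\ell,m)=(Tf,e_\ell^m)=(f,T^*e_\ell^m)$ well-defined and continuous in $f$ so that the sum converges to the claimed product.

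For the final statement concerning $(D)$, the implication $(D)\Rightarrow(A)$ needs no boundedness: given $v\in H_j$ we have $P_jv=v$, so $Tv=TP_jv=P_jTv\in H_j$. The genuinely delicate direction is $(A)\Rightarrow(D)$ under $T\in{\mathscr L}(\Hcal)$, and this is where I expect the main obstacle. The issue is that $(A)$ only asserts $T(H_j)\subset H_j$ on each individual block, which directly yields $TP_j=P_jTP_j$; to upgrade this to $TP_j=P_jT$ one must also control how $T$ maps the \emph{other} blocks $H_{j'}$, $j'\neq j$, into $H_j$. The clean way is to invoke $(B)$ or $(C)$ to see that $T$ maps $H_{j'}$ into $H_{j'}$, hence $P_jTP_{j'}=0$ for $j'\neq j$; summing $P_jT=P_jT\bigl(\sum_{j'}P_{j'}\bigr)$ over the decomposition then collapses to $P_jTP_j=TP_j$. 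This summation step is precisely where boundedness of $T$ is essential, since it legitimises applying $T$ to the convergent series $f=\sum_{j'}P_{j'}f$ and interchanging $T$ with the infinite sum via continuity; for a merely densely defined $T$ the partial sums need not stay in $\Hcal^\infty$ nor converge after applying $T$, which is why the equivalence with $(D)$ is asserted only under the bounded extension hypothesis.
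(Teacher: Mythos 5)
Your proposal is correct and follows essentially the same route as the paper's proof: the same matrix construction for (A)$\Leftrightarrow$(B), testing on basis vectors for (C)$\Rightarrow$(B), using the adjoint identity $(Tf,e_\ell^m)=(f,T^*e_\ell^m)$ to justify the series interchange in (B)$\Rightarrow$(C), the projection trick $P_jv=v$ for (D)$\Rightarrow$(A), and boundedness to pass $T$ through the series $f=\sum_{j'}P_{j'}f$ for (A)$\Rightarrow$(D). The only cosmetic difference is that the paper phrases the last step as showing $T(H_j^{\bot})\perp H_j$, whereas you phrase it as $P_jTP_{j'}=0$ for $j'\neq j$ and sum over the blocks (and note that $T(H_{j'})\subset H_{j'}$ already follows from (A) itself, so invoking (B) or (C) there is unnecessary); these are the same argument.
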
 

Under the assumptions of Theorem \ref{THM:inv-rem}, we have the direct sum 
decomposition
\begin{equation}\label{EQ:sum}
\Hcal = \bigoplus_{j=0}^{\infty} H_{j},\quad H_{j}={\rm span} \{e_{j}^{k}\}_{k=1}^{d_{j}},
\end{equation}
and we have $d_{j}=\dim H_{j}.$
The two applications that we will consider will be with $\Hcal=L^{2}(M)$ for a
compact manifold $M$ with $H_{j}$ being the eigenspaces of an elliptic 
pseudo-differential operator $E$, or with $\Hcal=L^{2}(G)$ for a compact Lie group
$G$ with $$H_{j}=\textrm{span}\{\xi_{km}\}_{1\leq k,m\leq d_{\xi}}$$ for a
unitary irreducible representation $\xi\in[\xi_{j}]\in\widehat{G}$. The difference
is that in the first case we will have that the eigenvalues of $E$ corresponding
to $H_{j}$'s are all distinct, while in the second case the eigenvalues of the Laplacian
on $G$ for which $H_{j}$'s are the eigenspaces, may coincide.
In Remark \ref{REM:torus} we give an example of this difference for operators on
the torus $\Tn$.

In view of properties (A) and (C), respectively, an operator $T$ satisfying any of
the equivalent properties (A)--(C) in
Theorem \ref{THM:inv-rem}, will be called an {\em invariant operator}, or
a {\em Fourier multiplier relative to the decomposition
$\{H_{j}\}_{j\in\N_{0}}$} in \eqref{EQ:sum}.
If the collection $\{H_{j}\}_{j\in\N_{0}}$
is fixed once and for all, we can just say that $T$ is {\em invariant}
or a {\em Fourier multiplier}.

The family of matrices $\sigma$ will be
called the {\em matrix symbol of $T$ relative to the partition $\{H_{j}\}$ and to the
basis $\{e_{j}^{k}\}$.}
It is an element of the space $\Sigma$ defined by
\begin{equation}\label{EQ:Sigma1}
\Sigma=\{\sigma:\N_{0}\ni\ell\mapsto\sigma(\ell)\in \ce^{d_{\ell}\times d_{\ell}}\}.
\end{equation}
A criterion for the extendability of $T$ to ${\mathscr L}(\Hcal)$ in terms
of its symbol will be given in Theorem \ref{L2-abstract}.

For $f\in\Hcal$, in the notation of Theorem \ref{THM:inv-rem},
by definition we have
\begin{equation}\label{EQ:ser}
f=\sum_{j=0}^{\infty} \sum_{k=1}^{d_{j}} \widehat{f}(j,k) e_{j}^{k}
\end{equation}
with the convergence of the series in $\Hcal$.
Since $\{e^k_j\}_{j\geq 0}^{1\leq k\leq d_j}$ is a complete orthonormal 
system on  $\Hcal$, for all $f\in \Hcal$ we have the Plancherel formula
\beq \label{EQ:Plancherel}
\|f\|^2_{\Hcal}=\sum\limits_{j=0}^{\infty}\sum\limits_{k=1}^{d_j}|( f,e_j^k)|^2
=  \sum\limits_{j=0}^{\infty}\sum\limits_{k=1}^{d_j}|\widehat{f}(j,k)|^{2}
=\|\widehat{f}\|^{2}_{\ell^2(\N_{0},\Sigma)},
\eq
where we interpret $\widehat{f}\in\Sigma$ as an element of the space
\begin{equation}\label{EQ:aux3}
\ell^2(\N_{0,}\Sigma)=
\{h:\ene_0\rightarrow \prod\limits_d\ce^{d}: h(j)\in \ce^{d_j}\, \mbox{ and }\,\sum\limits_{j=0}^{\infty}\sum\limits_{k=1}^{d_j}|h(j,k)|^2<\infty\}, 
\end{equation}
and where we have written $h(j,k)=h(j)_k$. 
In other words, $\ell^2(\N_{0,}\Sigma)$ is the space of all $h\in\Sigma$ such that
$$
\sum\limits_{j=0}^{\infty}\sum\limits_{k=1}^{d_j}|h(j,k)|^2<\infty.
$$
We endow  $\ell^2(\N_{0},\Sigma)$ with the norm
\begin{equation}\label{EQ:aux4}
\|h\|_{\ell^2(\N_{0,}\Sigma)}:=\left(\sum\limits_{j=0}^{\infty}\sum\limits_{k=1}^{d_j}|h(j,k)|^2\right)^{\half}.
\end{equation}

We note that the matrix symbol $\sigma(\ell)$ depends 
not only on the partition \eqref{EQ:sum} but also
on the choice of the orthonormal basis.
Whenever necessary, we will indicate the dependance of $\sigma$ on the orthonormal 
basis by writing $(\sigma,\{e_j^k\}_{j\geq 0}^{1\leq k\leq d_j} )$ and we also will refer to 
$(\sigma,\{e_j^k\}_{j\geq 0}^{1\leq k\leq d_j} )$ as the {\em symbol} of $T$. 
Throughout this  section the orthonormal basis will be fixed and unless there is some 
risk of confusion the symbols will be denoted simply by $\sigma$.  
In the invariant language, 
as will be clear from the proof of Theorem \ref{THM:inv-rem},
we have that the transpose of the symbol,
$\sigma(j)^{\top}=T|_{H_{j}}$ is just the restriction of
$T$ to $H_{j}$, which is well defined in view of the property (A).

We will also sometimes 
write $T_{\sigma}$ to indicate that $T_{\sigma}$ is an operator corresponding to the 
symbol $\sigma $. It is clear from the definition that invariant operators are 
uniquely determined by their symbols. Indeed, if $T=0$ we obtain 
$\sigma=0$  for any choice of an orthonormal basis.  
Moreover, we note that by taking $j=\ell$ in (B) of Theorem \ref{THM:inv-rem} we obtain
the formula for the symbol:
\beq\label{symbinv}
\sigma(j)_{mk}=\widehat{Te_j^k}(j,m),
\eq
for all $1\leq k,m\leq d_j$. The formula (\ref{symbinv}) furnishes 
an explicit formula for the symbol in terms of the operator and the orthonormal basis. 
The definition of Fourier coefficients tells us that for invariant operators
we have 
\beq\label{symbinv2}
\sigma(j)_{mk}=({Te_j^k},e_j^m)_{L^2(M)}.
\eq
In particular,  for the identity operator $T=I$ we have $\sigma_{I}(j)=I_{d_{j}}$,
where $I_{d_{j}}\in \C^{{d_{j}}\times {d_{j}}}$ is the identity matrix.

Before proving Theorem  \ref{THM:inv-rem}, let us establish a formula relating symbols 
with respect to different orthonormal basis. 
If $\{e_{\alpha}\}$ and $\{f_{\alpha}\}$ are orthonormal bases of 
$\Hcal$, we consider the unitary operator $U$ determined by 
$U(e_{\alpha})=f_{\alpha}$. Then we have
\[
(Te_{\alpha}, e_{\beta})_{\Hcal}=(UTe_{\alpha}, Ue_{\beta})_{\Hcal}
=(UTU^*Ue_{\alpha}, Ue_{\beta})_{\Hcal}
=(UTU^*f_{\alpha}, f_{\beta})_{\Hcal}.
\]
If $(\sigma_{T}, \{e_{\alpha}\})$ denotes the symbol of $T$ with respect to the 
orthonormal basis $\{e_{\alpha}\}$ and $(\sigma_{UTU^*}, \{f_{\alpha}\})$ 
denotes the symbol of $UTU^*$ with respect to the orthonormal basis $\{f_{\alpha}\}$ 
we have obtained the relation
\beq\label{difsymb} (\sigma_{T}, \{e_{\alpha}\})=({\sigma_{UTU^*}}, \{f_{\alpha}\}).\eq
Thus, the equivalence relation of basis $\{e_{\alpha}\}\sim  \{f_{\alpha}\}$ given by 
a unitary operator $U$ induces
the equivalence relation on the set $\Sigma$ of symbols given by 
\eqref{difsymb}. In view of this,
we can also think of the symbol as an element of the space
$\Sigma/\sim$ with the equivalence relation given by
\eqref{difsymb}.

We make another remark concerning part (C) of Theorem \ref{THM:inv-rem}.
We use the condition that $e_j^k$ are in the domain ${\rm Dom}(T^*)$ of $T^*$ in showing the implication
(B) $\Longrightarrow$ (C). Since $e_j^k$'s give a basis in $\Hcal$, and are all contained in 
${\rm Dom}(T^*)$, it follows that ${\rm Dom}(T^*)$ is dense in $\Hcal$. In particular, 
by \cite[Theorem VIII.1]{r-s:vol1}, $T$ must be closable (in part (C)). These conditions are not restrictive for the further
analysis since they are satisfied in the natural applications of this paper.

The principal application of the notions above will be as follows,
except for in the sequel we will need more general operators $E$
unbounded on $\Hcal$. In order to distinguish from this general case,
in the following theorem we use the notation $\En$.

\begin{thm}\label{THM:inv-rem2}
Continuing with the notation of Theorem \ref{THM:inv-rem}, let 
$\En\in {\mathscr L}(\Hcal)$ be a linear continuous operator such
that $H_{j}$ are its eigenspaces:
$$\En e_{j}^{k}=\lambda_{j} e_{j}^{k}$$ for each $j\in\ene_{0}$ and all $1\leq k\leq d_{j}$.
Then equivalent conditions {\rm (A)--(C)} imply the property 
\begin{itemize}
\item[(E)]
For each $j\in\ene_{0}$ and $1\leq k\leq j$, we have
$T\En e_{j}^{k}=\En T e_{j}^{k},$
\end{itemize}
and if 
$\lambda_{j}\not=\lambda_{\ell}$ 
for $j\not=\ell$, then {\rm (E)} is equivalent to  
properties {\rm (A)--(C)}.

\smallskip
Moreover, if $T$ extends to a bounded operator
$T\in{\mathscr L}(\Hcal)$ then equivalent properties {\rm (A)--(D)} 
imply the condition
\begin{itemize}
\item[(F)]  $T\En=\En T$ on $\Hcal$,
\end{itemize}
and if also $\lambda_{j}\not=\lambda_{\ell}$ 
for $j\not=\ell$, then {\rm (F)} is equivalent to  {\rm (A)--(E)}.
\end{thm}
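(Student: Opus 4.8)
The plan is to prove all four assertions by expanding $T$ against the orthonormal basis $\{e_{j}^{k}\}$ and exploiting the eigenvalue relation $\En e_{j}^{k}=\lambda_{j}e_{j}^{k}$; the distinctness hypothesis $\lambda_{j}\neq\lambda_{\ell}$ for $j\neq\ell$ will be the decisive ingredient in the two reverse implications, while boundedness of $T$ and $\En$ will handle the passage from the dense subspace to all of $\Hcal$.

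To see that {\rm (A)--(C)} imply {\rm (E)}, I would invoke property (A): since $T(H_{j})\subset H_{j}$ we have $Te_{j}^{k}\in H_{j}$, and because $H_{j}$ is exactly the $\lambda_{j}$-eigenspace of $\En$ this gives $\En Te_{j}^{k}=\lambda_{j}Te_{j}^{k}$. On the other hand $T\En e_{j}^{k}=\lambda_{j}Te_{j}^{k}$ is immediate from the eigenvalue equation, so the two sides of {\rm (E)} coincide. For the converse under the distinctness assumption, I would expand $Te_{j}^{k}=\sum_{\ell,m}a_{\ell m}e_{\ell}^{m}$. Then {\rm (E)}, rewritten as $\lambda_{j}Te_{j}^{k}=\En Te_{j}^{k}$, becomes $\sum_{\ell,m}a_{\ell m}(\lambda_{\ell}-\lambda_{j})e_{\ell}^{m}=0$; by orthonormality every coefficient $a_{\ell m}(\lambda_{\ell}-\lambda_{j})$ vanishes, and since $\lambda_{\ell}\neq\lambda_{j}$ for $\ell\neq j$ this forces $a_{\ell m}=0$ whenever $\ell\neq j$. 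Hence $Te_{j}^{k}\in H_{j}$, which is precisely {\rm (A)}, and the equivalences of Theorem \ref{THM:inv-rem} then deliver {\rm (B)} and {\rm (C)}.

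For the bounded case, {\rm (A)--(C)} already give {\rm (E)} on every basis vector $e_{j}^{k}$; since $\En e_{j}^{k}=\lambda_{j}e_{j}^{k}$ again lies in the linear span of the basis, the identity $T\En=\En T$ holds on the dense subspace $\mathrm{span}\{e_{j}^{k}\}\subset\Hcal^{\infty}$. As both $T$ and $\En$ belong to ${\mathscr L}(\Hcal)$, continuity extends this identity to all of $\Hcal$, which is {\rm (F)}. Conversely, restricting {\rm (F)} to the $e_{j}^{k}$ yields {\rm (E)}, and under the distinctness assumption the computation above recovers {\rm (A)--(C)}; because $T$ is bounded, Theorem \ref{THM:inv-rem} then also furnishes {\rm (D)}, so {\rm (F)} is equivalent to {\rm (A)--(E)}.

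The only genuinely delicate point is the reverse direction, where the distinctness $\lambda_{j}\neq\lambda_{\ell}$ is indispensable: it is exactly what separates the eigenspaces so that commuting with $\En$ forces $T$ to preserve each individual $H_{j}$. If distinct $j$ were allowed to share an eigenvalue, {\rm (E)} (and {\rm (F)}) would only force $T$ to preserve the larger, merged $\lambda$-eigenspaces rather than the separate $H_{j}$, and the equivalence with {\rm (A)--(C)} would break down; this is the mechanism behind the $\mathrm{SO(4)}$ and torus examples mentioned above, and it is why the distinctness hypothesis is attached only to the reverse implications.
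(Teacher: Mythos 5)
Your proof is correct and follows essentially the same route as the paper's: the forward implication via invariance of the eigenspaces, the converse via separation of the eigenvalues $\lambda_{j}$, and the passage to (F) by density of $\mathrm{span}\{e_{j}^{k}\}$ together with boundedness of $T$ and $\En$. The only cosmetic difference is that where the paper appeals to $H_{j}$ being the maximal $\lambda_{j}$-eigenspace, you establish this explicitly by expanding $Te_{j}^{k}$ in the basis and comparing coefficients, which legitimately uses $\En\in{\mathscr L}(\Hcal)$ to apply $\En$ term by term to the series.
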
 

For an operator $T=F(\En)$, when it is well-defined by the spectral calculus, we have
\begin{equation}\label{EQ:symbol-Fa}
\sigma_{F(\En)}(j)=F(\lambda_{j}) I_{d_{j}}.
\end{equation}
In fact, this is also well-defined then 
for a function $F$ defined on $\lambda_j$, with finite values which 
are e.g. $j$-uniformly bounded (also for non self-adjoint $E_o$).
We first prove Theorem \ref{THM:inv-rem}.

\begin{proof}[Proof of Theorem \ref{THM:inv-rem}]

(A) $\Longrightarrow$ (B).
If $T$  satisfies condition (A), we consider the matrix of 
$T|_{H_{j}}:H_{j}\rightarrow H_{j}$ with respect to the orthonormal basis 
$\{e^i_j:1\leq i\leq d_j\}$ of $H_j$ and denote it by $\beta(j)$. Then 
\[Te_j^k=\sum\limits_{i=1}^{d_j}\beta(j)_{ki}e_j^i.\]
Consequenlty, we have
\begin{align*} \widehat{Te_j^k}(\ell,m)=&(Te_j^k,e_{\ell}^m)
=\beta(j)_{km}\delta_{j\ell}
=\beta(\ell)_{km}\delta_{j\ell}.
\end{align*}
We take then $\sigma(\ell):=\beta(\ell)^{\top}$;
it belongs to $\ce^{d_{\ell}\times d_{\ell}}$ and satisfies (B).

\smallskip
(B) $\Longrightarrow$ (A).
Since $e_{j}^{k}\in\Hcal^{\infty}$, writing the series \eqref{EQ:ser} 
for $Te_j^k\in\Hcal$, we have
\begin{equation} \label{EQ:aux2}
Te_j^k=
\sum\limits_{\ell}\sum\limits_{m=1}^{d_{\ell}}\widehat{Te_j^k}(\ell,m)e_{\ell}^m
=\sum\limits_{\ell}\sum\limits_{m=1}^{d_{\ell}}\sigma(\ell)_{mk}\delta_{j\ell}e_{\ell}^m
=\sum\limits_{m=1}^{d_{\ell}}\sigma(j)_{mk}e_{j}^m\in \, H_j.
\end{equation}
Since $\{e^m_j:1\leq m\leq d_j\}$ spans $H_j$, we obtain (A).

\smallskip
(B) $\Longrightarrow$ (C).
We assume in addition that $e_j^k$ are in the domain of $T^*$ for all $j$ and $k$.
We also assume that for each $\ell\in\ene_0 $ 
there exists a matrix $\sigma(\ell)\in\ce^{d_{\ell}\times d_{\ell}}$ such that
\beq
\widehat{Te_j^k}(\ell,m)=\sigma(\ell)_{mk}\delta_{j\ell}.\label{eq21}
\eq
Now, if  $f\in\Hcal^{\infty}$, then $Tf\in\Hcal$, and by
 the inversion formula \eqref{EQ:ser} we have
$$
f=\sum\limits_{j=0}^{\infty}\sum\limits_{k=1}^{d_j}\widehat{f}(j,k) e_j^k.
$$
Now, using this and the fact that all $e_\ell^m$ are in the domain of $T^*$, we have 
\begin{align*} \widehat{Tf}(\ell,m)=&(Tf,e_\ell^m) \\
=&(f,T^* e_\ell^m)\\
=&\left(\sum\limits_{j=0}^{\infty}\sum\limits_{k=1}^{d_j} \widehat{f}(j,k)e_j^k,T^*e_\ell^m\right)\\
=&\sum\limits_{j=0}^{\infty}\sum\limits_{k=1}^{d_j}\widehat{f}(j,k) \left( Te_j^k,e_\ell^m\right)\\
=&\sum\limits_{j=0}^{\infty}\sum\limits_{k=1}^{d_j} \widehat{f}(j,k)\widehat{Te_j^k}(\ell,m)\\
=&\sum\limits_{j=0}^{\infty}\sum\limits_{k=1}^{d_j} \widehat{f}(j,k)\sigma(\ell)_{mk}\delta_{j\ell}\\
=&\sum\limits_{k=1}^{d_j} \sigma(\ell)_{mk}\widehat{f}(\ell,k),
\end{align*}
where we also used (\ref{eq21}).
Hence
$\widehat{Tf}(\ell)=\sigma(\ell)\widehat{f}(\ell),$
yielding (C).

\smallskip
(C) $\Longrightarrow$ (B). If $\widehat{Tf}(\ell)=\sigma(\ell)\widehat{f}(\ell)$,
then 
\begin{multline*}
\widehat{Te_j^k}(\ell,m)=\left(\sigma(\ell)\widehat{e_j^k}(\ell)\right)_{m}
=\sum\limits_{i=1}^{d_j}\sigma(\ell)_{mi}\widehat{e_j^k}(\ell,i)
=\sum\limits_{i=1}^{d_j}\sigma(\ell)_{mi}\delta_{j\ell}\delta_{ki}
=\sigma(\ell)_{mk}\delta_{j\ell},
\end{multline*}
which gives (B), even without any assumptions on $T^*$.

\smallskip
(D) $\Longrightarrow$ (A). We 
take $f\in H_{j}.$ Then $P_{j} f\in H_{j}$  since $P_{j} f=f$, so that by assumption (D)
we have
$$
Tf=TP_{j}f=P_{j}T f\in H_{j},
$$
implying (A).

\smallskip
(A) $\Longrightarrow$ (D).
For this part we assume in addition that $T$ extends to a
bounded operator $T\in {\mathscr L}(\Hcal)$.
First, we show that this together with (A) implies that $T(H_j^{\bot})$ is orthogonal to $H_j$.
For $g\in H_j^{\bot}$, we can write 
\[
g=\sum\limits_{\ell\neq j}\sum\limits_{k=1}^{d_{\ell}}(g,e_{\ell}^k)e_{\ell}^k
\]
with the convergence in $\Hcal$, so that
\[
Tg=\sum\limits_{\ell\neq j}\sum\limits_{k=1}^{d_{\ell}}(g,e_{\ell}^k)Te_{\ell}^k
\] 
with the convergence in $\Hcal$ due to the boundedness of $T$ on $\Hcal$.
Since by (A) we have
$Te_{\ell}^k\in H_{\ell}\subset H_j^{\bot} $ for $\ell\neq j$ we conclude that 
$Tg$ is orthogonal to $H_{j}$.

Let now $f\in \Hcal^{\infty}$. Writing
$f=f_{1}+f_{2}$ with $f_{1}:=P_{j}f$ so that 
$f_{1}\in H_{j}$ and $f_{2}\in H_{j}^{\bot}$ are both in $\Hcal^{\infty}$, we have
$$
P_{j}Tf=P_{j}Tf_{1}+P_{j}Tf_{2}=Tf_{1}=TP_{j}f,
$$
since the proved claim
$P_{j}f_{2}=0$ implies that $P_{j}Tf_{2}=0$.
\end{proof}

We now continue with the proof of  Theorem \ref{THM:inv-rem2} when the basis
$e_{j}^{k}$ corresponds to the eigenvectors of an operator
$\En\in {\mathcal L}(\Hcal).$

\begin{proof}[Proof of Theorem \ref{THM:inv-rem2}]

(A) $\Longrightarrow$ (E).
Let us fix some $e_j^k$. By condition (A) we can write 
\[
Te_j^k=\sum\limits_{i=1}^{d_j}\alpha_{i}e_j^i
\]
for some constants $\alpha_{i}$.
Then
\begin{multline*} \En T e_j^k=\En \sum\limits_{i=1}^{d_j}\alpha_{i}e_j^i
=\sum\limits_{i=1}^{d_j}\alpha_{i}\lambda_j e_j^i
=\lambda_j\sum\limits_{i=1}^{d_j}\alpha_{i}e_j^i
=\lambda_j Te_j^k
=T\lambda_je_j^k
=T \En e_j^k,
\end{multline*}
which shows (E).

\smallskip
(E) $\Longrightarrow$ (A).
%
We note that 
it is enough to prove that $Te_j^k\in H_j$ since $\{e_j^k:1\leq k\leq d_j\}$ forms a basis of the 
finite dimensional space $H_j$. We can assume that 
$Te_j^k\neq 0$ since otherwise there is nothing to prove.
We recall that $\En e_j^k=\lambda_j e_j^k$. Using property (E), we have
\[
\lambda_j Te_j^k=T\En e_j^k = \En T {e_j^k}.
\]
Hence
$Te_j^k\in \Hcal$ is a non-zero eigenvector of $\En$ corresponding to the eigenvalue 
$\lambda_j$. Consequently, since $H_{j}$ are maximal eigenspaces
corresponding to $\lambda_{j}$, we must have $Te_j^k\in H_j.$ 

\smallskip
(E) $\Longrightarrow$ (F).
Since we have already shown that 
(A)--(C) always imply (E), it is enough to prove that (E) implies (F)
under the additional assumption that $T\in {\mathscr L}(\Hcal)$.

Let us write $S:=\En\circ T, D:=T\circ \En$ and let $f\in \Hcal$. 
Under the assumptions both $S$ and $D$ are bounded on $\Hcal$,
and hence the formula \eqref{EQ:ser} implies
\[
Sf=\lim\limits_N\sum\limits_{j=0}^{N}\sum\limits_{k=1}^{d_j}(f,e_j^k)Se_j^k
=\lim\limits_N\sum\limits_{j=0}^{N}\sum\limits_{k=1}^{d_j}(f,e_j^k)De_j^k
=Df,
\]
with the convergent series in $\Hcal$.

\smallskip
(F) $\Longrightarrow$ (A). 
We note that we require $T\in {\mathscr L}(\Hcal)$ in order 
for $T\En$ and $\En T$ to make sense on $\Hcal$.
It is clear that (F) implies (E),
and under the additional assumption that 
$\lambda_{j}\not=\lambda_{\ell}$ 
for $j\not=\ell$ we already know that (A)--(C) and (E) are equivalent.
If $T$ is bounded on $\Hcal$, then they are also equivalent to (D).
\end{proof}

We have the following criterion for the extendability of
a densely defined invariant operator $T:\Hcal^{\infty}\to \Hcal$ to ${\mathscr L}(\Hcal)$,
which was an additional hypothesis for properties (D) and (F).
In the statements below we fix a partition into $H_{j}$'s as in \eqref{EQ:sum}
and the invariance refers to it.

\begin{thm}\label{L2-abstract} 
An invariant linear operator $T:\Hcal^{\infty}\to \Hcal$ extends to a bounded
operator from $\Hcal$ to $\Hcal$ if and only if its symbol $\sigma$ satisfies
$\sup\limits_{\ell\in\N_{0}}\|\sigma(\ell)\|_{{\mathscr L}(H_{\ell})}<\infty.$
Moreover, denoting this extension also by $T$, we have
\[
\| T\|_{{\mathscr L}(\Hcal)} =\sup\limits_{\ell\in \N_{0}}\|\sigma(\ell)\|_{{\mathscr L}(H_{\ell})}.
\]
\end{thm}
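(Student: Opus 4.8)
The plan is to establish the two inequalities
$\|T\|_{\mathscr{L}(\Hcal)}\le \sup_\ell\|\sigma(\ell)\|_{\mathscr{L}(H_\ell)}$
and
$\sup_\ell\|\sigma(\ell)\|_{\mathscr{L}(H_\ell)}\le \|T\|_{\mathscr{L}(\Hcal)}$
separately; together with the standard extension-by-density argument they yield both the equivalence and the norm identity. The two tools I would use throughout are the Plancherel identity \eqref{EQ:Plancherel}, which identifies $\Hcal$ isometrically with $\ell^2(\N_0,\Sigma)$, and the multiplier form $\widehat{Tf}(\ell)=\sigma(\ell)\widehat{f}(\ell)$ from property (C) of Theorem \ref{THM:inv-rem}. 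The conceptual point is that invariance (property (A)) makes $T$ block diagonal, with blocks $\sigma(\ell)$ acting on the finite-dimensional $H_\ell\cong\C^{d_\ell}$, so the statement is exactly the assertion that the norm of a direct sum of operators equals the supremum of the norms of the summands. The only care needed is that $T$ is a priori merely densely defined, so I cannot quote that direct-sum formula directly and must argue the two bounds by hand.

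For the upper bound, assume $C:=\sup_\ell\|\sigma(\ell)\|_{\mathscr{L}(H_\ell)}<\infty$. Given $f\in\Hcal^\infty$ we have $Tf\in\Hcal$, so Plancherel applies to $Tf$; using $\widehat{Tf}(\ell)=\sigma(\ell)\widehat{f}(\ell)$ together with the blockwise estimate $\|\sigma(\ell)\widehat{f}(\ell)\|_{\C^{d_\ell}}\le C\|\widehat{f}(\ell)\|_{\C^{d_\ell}}$ and summing over $\ell$ gives $\|Tf\|_\Hcal\le C\|f\|_\Hcal$. Since $\Hcal^\infty$ is dense, $T$ extends to a bounded operator with $\|T\|_{\mathscr{L}(\Hcal)}\le C$.

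For the lower bound, suppose $T\in\mathscr{L}(\Hcal)$. The key observation is that each $H_\ell$ is contained in $\Hcal^\infty$, being the finite span of the $e_\ell^k\in\Hcal^\infty$, and is $T$-invariant by (A); hence for $f\in H_\ell$ all Fourier blocks other than the $\ell$-th vanish, so $\|f\|_\Hcal=\|\widehat{f}(\ell)\|_{\C^{d_\ell}}$ and $\|Tf\|_\Hcal=\|\widehat{Tf}(\ell)\|_{\C^{d_\ell}}=\|\sigma(\ell)\widehat{f}(\ell)\|_{\C^{d_\ell}}$. As $f$ ranges over $H_\ell$ the column $\widehat{f}(\ell)$ ranges over all of $\C^{d_\ell}$, whence $\|\sigma(\ell)\|_{\mathscr{L}(H_\ell)}\le\|T\|_{\mathscr{L}(\Hcal)}$; taking the supremum over $\ell$ closes the loop and forces equality. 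The main obstacle, to the extent there is one, is purely bookkeeping: one must keep track of the identification of the $\mathscr{L}(H_\ell)$-operator norm with the Euclidean matrix norm of $\sigma(\ell)$ acting on columns (note that it is immaterial whether one works with $\sigma(\ell)$ or with its transpose $T|_{H_\ell}$, since a matrix and its transpose have equal operator norm), and verify that Plancherel may legitimately be applied to $Tf$ and not only to $f$.
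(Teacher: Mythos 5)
Your proposal is correct and follows essentially the same route as the paper: the upper bound via the Plancherel identity \eqref{EQ:Plancherel} combined with the multiplier form $\widehat{Tf}(\ell)=\sigma(\ell)\widehat{f}(\ell)$, and the lower bound by restricting $T$ to each finite-dimensional invariant subspace $H_\ell$ and identifying $\|T|_{H_\ell}\|_{\mathscr{L}(H_\ell)}$ with $\|\sigma(\ell)\|_{op}$ (the paper phrases this identification through an explicit unitary $U:H_\ell\to\ce^{d_\ell}$ and the matrix $\beta(\ell)=\sigma(\ell)^{\top}$, which is exactly your Fourier-coefficient identification, including the observation that transposition does not change the operator norm).
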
 
\begin{proof}  
We will often abbreviate writing $\|\sigma(\ell)\|_{op}:=\|\sigma(\ell)\|_{{\mathscr L}(H_{\ell})}$.
Let us first suppose that 
$\|\sigma(\ell)\|_{op}\leq C$ for all $\ell\in\N_{0}$. 
By the Plancherel formula \eqref{EQ:Plancherel}
we have
\begin{align*} \|Tf\|_{\Hcal}^2=&\|\widehat{Tf}\|_{\ell^2(\ene_0,\Sigma)}^2\\
=& \sum\limits_{\ell}\|\widehat{Tf}(\ell)\|_{\ell^2(\ce^{d_{\ell}})}^2\\
=& \sum\limits_{\ell}\|\sigma(\ell)\widehat{f}(\ell)\|_{\ell^2(\ce^{d_{\ell}})}^2\\
\leq& \sum\limits_{\ell}\|\sigma(\ell)\|^{2}_{op}\|\widehat{f}(\ell)\|_{\ell^2(\ce^{d_{\ell}})}^2\\
\leq& \sup\limits_{\ell}\|\sigma(\ell)\|^{2}_{op}
\sum\limits_{\ell}\|\widehat{f}(\ell)\|_{\ell^2(\ce^{d_{\ell}})}^2\\
=& \p{\sup\limits_{\ell}\|\sigma(\ell)\|_{op}}^{2}\|f\|^{2}_{\Hcal}.
\end{align*}

Conversely, let us suppose that $T$ is bounded on $\Hcal$. 
Then there exists a constant $C>0$ such that $\|Tf\|_{\Hcal}\leq C$ for all
 $f$ such that $\|f\|_{\Hcal}=1$. 
We can take $C:=\| T\|_{{\mathscr L}(\Hcal)}$. Hence 
\[T|_{H_{j}}:H_{j}\rightarrow H_{j}\]
is bounded and $\|T|_{H_{j}}\|_{\mathscr{L}(H_{j})}\leq C$. On the other hand, let
$\beta(j)$ denote the matrix of $T|_{H_{j}}:H_{j}\rightarrow H_{j}$ with respect to the orthonormal basis $\{e^i_j:1\leq i\leq d_j\}$ of $H_j$ as in the proof of Part
(A) implies (B) in Theorem \ref{THM:inv-rem}. 
We consider an unitary operator $U:H_{j}\to \ce^{d_{j}}$ which defines coordinates in $\ce^{d_{j}}$ 
of vectors in $H_{j}$  with respect to the orthonormal basis $\{e^k_{j}:1\leq k\leq d_{j}\}$ of $H_{j}$. We also consider the operator 
$A(j):\ce^{d_{j}}\to \ce^{d_{j}}$ induced by the matrix $\beta(j)$. Then 
\[T|_{H_{j}}=U^*A(j)U,\] 
and 
\[\|\sigma(j)\|_{op}=\|\beta(j)\|_{op}=\|A(j)\|_{op}=\|T|_{H_{j}}\|_{\mathscr{L}(H_{j})}\leq C,\]
completing the proof.
\end{proof}

We also record the formula for the symbol of the composition of two invariant
operators:

\begin{prop}\label{comp1-abstract} 
If $S,T:\Hcal^{\infty}\to \Hcal$ are invariant operators with respect to the same 
orthonormal partition, and such that the domain of $S\circ T$ contains $\Hcal^{\infty}$,
then $S\circ T:\Hcal^{\infty}\to \Hcal$ is also 
invariant with respect to the same partition. Moreover, if $\sigma_S$ denotes the symbol of 
$S$ and $\sigma_T$ denotes the symbols of $T$ 
with respect to  the same orthonormal basis then 
\[\sigma_{S\circ T}=\sigma_S\sigma_T,\]
i.e. $\sigma_{S\circ T}(j)=\sigma_S(j)\sigma_T(j)$ for all $j\in\N_{0}.$
\end{prop}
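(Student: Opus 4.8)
The plan is to prove invariance of $S\circ T$ via the characterisation (A) of Theorem \ref{THM:inv-rem}, and then to read off the product formula directly from the explicit expression \eqref{symbinv} for the symbol. Both steps reduce to finite-dimensional linear algebra on each $H_j$, so the only real subtlety is domain bookkeeping.

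First I would verify property (A) for $S\circ T$. Since every basis vector $e_j^k$ lies in $\Hcal^{\infty}$ and $\Hcal^{\infty}$ is a linear subspace, the whole eigenspace $H_j={\rm span}\{e_j^k\}_{k=1}^{d_j}$ is contained in $\Hcal^{\infty}$. By invariance of $T$, property (A) gives $Te_j^k\in H_j\subset\Hcal^{\infty}$, so $STe_j^k$ is well defined (this is anyway guaranteed by the hypothesis that $\Hcal^{\infty}$ lies in the domain of $S\circ T$). As $Te_j^k$ is a finite linear combination of the $e_j^i$, applying property (A) for $S$ termwise yields $STe_j^k\in H_j$. Since the $e_j^k$ span $H_j$, linearity gives $(S\circ T)(H_j)\subset H_j$; that is, $S\circ T$ satisfies (A) and is therefore invariant with respect to the same partition.

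Next I would compute the symbol. Using the expansion established in \eqref{EQ:aux2}, invariance of $T$ gives $Te_j^k=\sum_{i=1}^{d_j}\sigma_T(j)_{ik}\,e_j^i$, and likewise $Se_j^i=\sum_{m=1}^{d_j}\sigma_S(j)_{mi}\,e_j^m$. Substituting into the defining formula \eqref{symbinv}, which reads $\sigma(j)_{mk}=\widehat{Te_j^k}(j,m)=(Te_j^k,e_j^m)_{\Hcal}$, I obtain
\[
\sigma_{S\circ T}(j)_{mk}=(STe_j^k,e_j^m)_{\Hcal}
=\sum_{i=1}^{d_j}\sigma_T(j)_{ik}\,(Se_j^i,e_j^m)_{\Hcal}
=\sum_{i=1}^{d_j}\sigma_S(j)_{mi}\,\sigma_T(j)_{ik},
\]
which is exactly the $(m,k)$ entry of the matrix product $\sigma_S(j)\sigma_T(j)$. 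Hence $\sigma_{S\circ T}(j)=\sigma_S(j)\sigma_T(j)$ for every $j\in\N_{0}$.

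The computation itself is routine, and the step I would flag as requiring the most care is ensuring that $STe_j^k$ makes sense and that the termwise application of $S$ is legitimate. This turns out not to be a genuine obstacle: invariance of $T$ forces $Te_j^k$ into the finite-dimensional space $H_j\subset\Hcal^{\infty}\subset{\rm Dom}(S)$, so $S$ may be applied to a finite sum with no convergence issues, while the standing hypothesis $\Hcal^{\infty}\subset{\rm Dom}(S\circ T)$ handles a general $f\in\Hcal^{\infty}$ and makes $S\circ T$ a well-defined operator $\Hcal^{\infty}\to\Hcal$.
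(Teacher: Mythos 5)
Your proof is correct, but it takes a genuinely different route from the paper's. The paper proves the proposition by applying property (C) of Theorem \ref{THM:inv-rem} twice: $\widehat{(S\circ T)f}(j)=\widehat{S(Tf)}(j)=\sigma_S(j)\widehat{Tf}(j)=\sigma_S(j)\sigma_T(j)\widehat{f}(j)$ for $f\in\Hcal^{\infty}$, and then concludes invariance of $S\circ T$ again via (C). You instead work with property (A): you show $(S\circ T)(H_j)\subset H_j$ by pushing basis vectors through the finite-dimensional spaces $H_j\subset\Hcal^{\infty}$, and then extract the product formula entrywise from \eqref{symbinv}--\eqref{symbinv2}. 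Your route has two concrete advantages. First, (C) carries the extra hypothesis that the $e_j^k$ lie in ${\rm Dom}(T^*)$ (and ${\rm Dom}(S^*)$), which your argument never needs, so your proof covers invariant operators satisfying only (A)/(B). Second, the paper's middle step applies the multiplier identity of $S$ to $g=Tf$, which lies in ${\rm Dom}(S)$ but need not lie in $\Hcal^{\infty}$; this is harmless (the proof of (B) $\Longrightarrow$ (C) extends the identity to all of ${\rm Dom}(S)$ once $e_\ell^m\in{\rm Dom}(S^*)$), but it is a subtlety your finite-dimensional computation sidesteps entirely. What the paper's approach buys in exchange is brevity and the fact that it directly establishes the multiplier identity (C) for $S\circ T$ on all of $\Hcal^{\infty}$, rather than only the subspace conditions (A)/(B) plus the symbol formula; under the standing hypotheses the two endpoints are of course equivalent.
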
 
\begin{proof}
Recalling the definition of the composition of densely defined operators,
the domain of $S\circ T$ is the space of functions $f$ in the domain of $T$ such that
$Tf$ is in the domain of $S$, in which case we set
$(S\circ T)f=S(Tf)$. The assumption says that we are in the position to use
Theorem \ref{THM:inv-rem}.
Applying the condition (C) of Theorem \ref{THM:inv-rem} repeatedly, we have
$$
\widehat{(S\circ T)f}(j)=\widehat{S(Tf)}(j)
=\sigma_S(j)\widehat{Tf}(j)\\
=\sigma_S(j)\sigma_T(j)\widehat{f}(j),
$$
so that $S\circ T$ is invariant by Part (C) of Theorem \ref{THM:inv-rem}.
\end{proof}

We now show another application of 
the above notions to give a characterisation of Schatten classes of invariant operators in
terms of their symbols. 

\begin{thm}\label{schchr-abstract} Let $0<r<\infty$. 
An invariant operator $T\in {\mathscr L}(\Hcal)$ with symbol 
$\sigma$ is in the Schatten class
$S_r(\Hcal)$ 
if and only if  $$\sum\limits_{\ell=0}^{\infty}\|\sigma(\ell)\|_{S_r(H_{\ell})}^r<\infty.$$ 
Moreover
\begin{equation}\label{EQ:Sch1}
\|T\|_{S_r(\Hcal)}=\p{\sum\limits_{\ell=0}^{\infty}\|\sigma(\ell)\|_{S_r(H_{\ell})}^r}^{1/r}.
\end{equation}
In particular, if $T$ is in the trace class
$S_1(\Hcal)$, then we have the trace formula
\begin{equation}\label{EQ:Sch2}
\Tr(T)=\sum\limits_{\ell=0}^{\infty}\Tr(\sigma(\ell)).
\end{equation}
\end{thm}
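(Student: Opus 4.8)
The plan is to exploit the block-diagonal structure that invariance forces on $T$. By property (A) of Theorem \ref{THM:inv-rem}, $T$ preserves each $H_{\ell}$, so writing $T_{\ell}:=T|_{H_{\ell}}:H_{\ell}\to H_{\ell}$ we obtain an orthogonal direct sum decomposition $T=\bigoplus_{\ell} T_{\ell}$ relative to \eqref{EQ:sum}. First I would check that $T^{*}$ respects the same decomposition: since $T\in{\mathscr L}(\Hcal)$, the adjoint $T^{*}$ is everywhere defined, and for $g\in H_{\ell}$, $f\in H_{m}$ with $m\neq\ell$ we have $(T^{*}g,f)_{\Hcal}=(g,Tf)_{\Hcal}=0$ because $Tf\in H_{m}\perp H_{\ell}\ni g$; hence $T^{*}(H_{\ell})\subset H_{\ell}$ and $T^{*}=\bigoplus_{\ell}T_{\ell}^{*}$. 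Consequently $T^{*}T=\bigoplus_{\ell}T_{\ell}^{*}T_{\ell}$, and since the positive square root of a block-diagonal positive operator is computed blockwise (the blocks being mutually orthogonal reducing subspaces), $|T|=(T^{*}T)^{\half}=\bigoplus_{\ell}|T_{\ell}|$.

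The key step is then the spectral bookkeeping: the singular values of $T$, i.e. the eigenvalues of $|T|$ counted with multiplicity, are exactly the union with multiplicity of the singular values of the finite-dimensional blocks $T_{\ell}$. Granting this, summing $r$-th powers yields
\[
\|T\|_{S_r(\Hcal)}^{r}=\sum_{k}s_k(T)^{r}=\sum_{\ell=0}^{\infty}\|T_{\ell}\|_{S_r(H_{\ell})}^{r},
\]
with $T\in S_r(\Hcal)$ precisely when the right-hand side is finite; indeed for $0<r<\infty$ an $r$-summable singular-value sequence tends to zero, so $T$ is compact (the limit of the finite-rank truncations $\bigoplus_{\ell\leq N}T_{\ell}$) and lies in $S_r(\Hcal)$. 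It remains to replace $\|T_{\ell}\|_{S_r(H_{\ell})}$ by $\|\sigma(\ell)\|_{S_r(H_{\ell})}$. From the proof of Theorem \ref{THM:inv-rem} the matrix of $T_{\ell}$ in the basis $\{e_{\ell}^{i}\}$ is $\beta(\ell)=\sigma(\ell)^{\top}$, and since a matrix and its transpose share the same singular values, $\|\sigma(\ell)\|_{S_r(H_{\ell})}=\|T_{\ell}\|_{S_r(H_{\ell})}$. This establishes both the equivalence and the norm identity \eqref{EQ:Sch1}.

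For the trace formula I would evaluate $\Tr(T)$ in the adapted orthonormal basis $\{e_{\ell}^{m}\}$, which is legitimate once $T\in S_1(\Hcal)$. Using \eqref{symbinv}, namely $\sigma(\ell)_{mm}=\widehat{Te_{\ell}^{m}}(\ell,m)=(Te_{\ell}^{m},e_{\ell}^{m})_{\Hcal}$, I obtain
\[
\Tr(T)=\sum_{\ell=0}^{\infty}\sum_{m=1}^{d_{\ell}}(Te_{\ell}^{m},e_{\ell}^{m})_{\Hcal}
=\sum_{\ell=0}^{\infty}\sum_{m=1}^{d_{\ell}}\sigma(\ell)_{mm}
=\sum_{\ell=0}^{\infty}\Tr(\sigma(\ell)),
\]
which is \eqref{EQ:Sch2}.

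The main obstacle is the justification of the singular-value identification for the infinite orthogonal direct sum: I must verify carefully that $|T|=\bigoplus_{\ell}|T_{\ell}|$ and that the spectrum of this block-diagonal self-adjoint operator is the disjoint union of the blockwise spectra with the correct multiplicities, and that this identification is compatible with summing $r$-th powers both in the quasi-norm range $0<r<1$ and in the normed range $r\geq 1$. Everything else reduces to the elementary linear algebra of finite matrices together with the Plancherel identity \eqref{EQ:Plancherel}.
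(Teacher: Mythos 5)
Your proof is correct and takes essentially the same approach as the paper: both arguments reduce the theorem to the blockwise identity $\|T\|_{S_r(\Hcal)}^r=\sum_{\ell}\|T|_{H_{\ell}}\|_{S_r(H_{\ell})}^r$ via the singular-value bookkeeping for the invariant decomposition, identify $\|T|_{H_{\ell}}\|_{S_r(H_{\ell})}=\|\sigma(\ell)\|_{S_r(H_{\ell})}$ through the matrix $\beta(\ell)=\sigma(\ell)^{\top}$ (transposition preserving singular values), and obtain \eqref{EQ:Sch2} by evaluating $\Tr(T)$ in the adapted basis $\{e_{\ell}^{k}\}$. Your explicit check that $T^{*}$, and hence $|T|$, is block-diagonal is simply a more detailed substitute for the paper's ``assume $T$ positive definite'' reduction together with its use of property (D), so the two proofs coincide in substance.
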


\begin{proof} 

First, we claim that Schatten classes of 
invariant operators can be characterised
in terms of the projections to the eigenspaces $H_{\ell}$: 
\begin{equation}\label{EQ:T-lem}
\|T\|_{S_r(\Hcal)}^r=\sum\limits_{\ell=0}^{\infty}\|T|_{H_{\ell}}\|_{S_r(H_{\ell})}^r.
\end{equation}
Let us prove \eqref{EQ:T-lem}.
Since $$\|T\|_{S_r}=\||T|\|_{S_r}$$ we can assume without loss of
generality that $T$ is positive definite. We first observe that
$\lambda$ is an eigenvalue (singular value) of $T$ if and only if $\lambda$ is an eigenvalue (singular value) of $T|_{H_{\ell(\lambda)}} $ for some $\ell(\lambda)$. Indeed, if $\lambda$ is an eigenvalue of $T$ there exists $\varphi_{\lambda}\in \Hcal\backslash\{0\}$ such that 
$T\varphi_{\lambda}=\lambda\varphi_{\lambda}$. 
Using Part (D) of Theorem \ref{THM:inv-rem}, we get that 
$$TP_{\ell}\varphi_{\lambda}=\lambda P_{\ell}\varphi_{\lambda}$$ holds for every $\ell$.
Since $\varphi_{\lambda}\not=0$, there 
exists $\ell(\lambda)$ such that $P_{\ell(\lambda)}\varphi_{\lambda}\not=0$.
Consequently, $\lambda$ is the eigenvalue of $T|_{H_{\ell(\lambda)}}=TP_{\ell(\lambda)}$. 
Conversely, since $T(H_{\ell(\lambda)})\subset H_{\ell(\lambda)}$, 
an eigenvalue of $T|_{H_{\ell(\lambda)}}$ is also an eigenvalue of $T$. 
Therefore, we obtain \eqref{EQ:T-lem}.

Now, given \eqref{EQ:T-lem}, to prove \eqref{EQ:Sch1}, it is enough to check that 
\beq
\label{syop}\|\sigma(\ell)\|_{S_r(H_{\ell})}=\|T|_{H_{\ell}}\|_{S_r(H_{\ell})}.
\eq
To prove (\ref{syop}) we consider an unitary operator 
$U:H_{\ell}\to \ce^{d_{\ell}}$ which defines coordinates in 
$\ce^{d_{\ell}}$ of functions in $H_{\ell}$  with respect to the orthonormal 
basis $\{e^k_{\ell}:1\leq k\leq d_{\ell}\}$ of $H_{\ell}$. 
We also consider the operator $A(\ell):\ce^{d_{\ell}}\to \ce^{d_{\ell}}$ 
induced by the matrix $(\sigma_{T}(\ell))^{\top}$. Then 
\[T|_{H_{\ell}}=U^*A(\ell)U,\] 
 and basic properties of Schatten quasinorms imply that
 \[\|T|_{H_{\ell}}\|_{S_r(H_{\ell})}=\|A(\ell)\|_{S_r({\ce^{d_{\ell}}})}=\|\sigma(\ell)\|_{S_r},\]
 completing the proof of (\ref{syop}) and of \eqref{EQ:Sch1}.

Finally, let us prove \eqref{EQ:Sch2} for operators in the trace class $S_{1}(\Hcal)$.
Since the trace $\Tr(T)$ does not depend on the choice of the orthonormal basis in
$\Hcal$, using property (C) and formula \eqref{EQ:aux2}, we can write 
\begin{multline*}
\Tr(T)=
\sum\limits_{\ell}\sum\limits_{k=1}^{d_{\ell}}(T e_{\ell}^k,e_{\ell}^k)
=\sum\limits_{\ell}\sum\limits_{k=1}^{d_{\ell}}\sum\limits_{m=1}^{d_{\ell}}\sigma(\ell)_{mk}(e_{\ell}^m,e_{\ell}^k)\\
=\sum\limits_{\ell}\sum\limits_{k=1}^{d_{\ell}}\sum\limits_{m=1}^{d_{\ell}}\sigma(\ell)_{mk}\delta_{mk}
=\sum\limits_{\ell}\sum\limits_{k=1}^{d_{\ell}}\sigma(\ell)_{kk}
=\sum\limits_{\ell}\Tr(\sigma(\ell)),
\end{multline*}
completing the proof.
\end{proof}

\begin{rem}\label{REM:torus}
We note that the membership in ${\mathscr L}(\Hcal)$ and in the Schatten classes
$S_{r}(\Hcal)$ does not depend on the decomposition of $\Hcal$ into subspaces $H_{j}$
as in \eqref{EQ:sum}. However, the notion of invariance does depend on it.
For example, let $\Hcal=L^{2}(\Tn)$ for the $n$-torus $\Tn=\Rn/\Zn$. 
Choosing 
$$H_{j}={\rm span}\{ e^{2\pi{\rm  i} j\cdot x} \}, \quad j\in\Zn,$$ 
we recover
the construction of Section \ref{SEC:Lie-groups} on compact Lie groups
and moreover, invariant operators
with respect to $\{H_{j}\}_{j\in\Zn}$ are the translation invariant operators on the torus
$\Tn$. However, to recover the construction of Section \ref{SEC:invariant}
on manifolds, we take
$\widetilde{H_{\ell}}$ to be the eigenspaces of the Laplacian $E$ on $\Tn$, so that
$$
\widetilde{H_{\ell}}=\bigoplus_{|j|^{2}=\ell} H_{j}=
{\rm span}\{e^{2\pi{\rm  i} j\cdot x}:\; j\in\Zn \textrm{ and }
|j|^{2}=\ell\},
\quad \ell\in\N_{0}.
$$ 
Then translation invariant operators on $\Tn$, i.e. operators invariant
relative to the partition $\{H_{j}\}_{j\in\Zn}$, are also invariant relative to the partition
$\{\widetilde{H_{\ell}}\}_{\ell\in\N_{0}}$ 
(or relative to the Laplacian, in terminology of Section \ref{SEC:invariant}).
If we have information on the eigenvalues of
$E$, like we do on the torus, we may sometimes also
recover invariant operators relative to the partition
$\{\widetilde{H_{\ell}}\}_{\ell\in\N_{0}}$ as linear combinations of translation invariant
operators composed with phase shifts and complex conjugation.
\end{rem}

\section{Fourier analysis associated to an elliptic operator}
\label{SEC:Fourier}

Our main application will be to study operators on compact manifolds, so we start this
section by describing the discrete Fourier series associated to an 
elliptic positive pseudo-differential operator as an adaptation of the construction in
Section \ref{SEC:abstract}. In order to fix the notation for the rest of the paper
we may give some explicit expressions for notions of Section \ref{SEC:abstract}
in the present setting.

Let $M$ be a compact smooth manifold of dimension $n$ without boundary, endowed with a fixed volume $dx$.  
 We denote by $\Psi^{\nu}(M)$ the H\"ormander class of pseudo-differential 
 operators of order $\nu\in\er$,
 i.e. operators which, in every coordinate chart, are operators in H\"ormander classes 
 on $\Rn$ with symbols
 in $S^\nu_{1,0}$, see e.g. \cite{shubin:r} or \cite{rt:book}.
 In this paper we will be using the class  $\Psi^{\nu}_{cl}(M)$ of classical operators, i.e. operators
 with symbols having (in all local coordinates) an asymptotic expansion of the symbol in
 positively homogeneous components (see e.g. \cite{Duis:BK-FIO-2011}).
 Furthermore, we denote by $\Psi_{+}^{\nu}(M)$ the class of positive definite operators in 
 $\Psi^{\nu}_{cl}(M)$,
 and by $\Psi_{e}^{\nu}(M)$ the class of elliptic operators in $\Psi^{\nu}_{cl}(M)$. Finally, 
 $$\Psi_{+e}^{\nu}(M):=\Psi_{+}^{\nu}(M)\cap \Psi_{e}^{\nu}(M)$$ 
 will denote the  class of classical positive elliptic 
 pseudo-differential operators of order $\nu$.
 We note that complex powers of such operators are well-defined, see e.g.
 Seeley \cite{Seeley:complex-powers-1967}. 
 In fact, all pseudo-differential operators considered in 
 this paper will be classical, so we may omit explicitly mentioning it every time, but we note
 that we could equally work with general operators in $\Psi^{\nu}(M)$ since their
 powers have similar properties, see e.g. \cite{Strichartz:functional-calculus-AJM-1972}.
 
We now associate a discrete Fourier analysis to the operator 
$E\in\Psi_{+e}^{\nu}(M)$ inspired 
by those constructions
considered by Seeley (\cite{see:ex}, \cite{see:exp}), see also 
Greenfield and Wallach \cite{Greenfield-Wallach:hypo-TAMS-1973}. 
However, we adapt it to our purposes and in the sequel also
prove several auxiliary statements concerning the
eigenvalues of $E$ and their multiplicities, useful to us in the subsequent analysis.
In general, the construction below is exactly the one appearing in
Theorem \ref{THM:inv-rem}.

The eigenvalues of $E$ (counted without multiplicities)
form a sequence $\{\lambda_j\}$ which we order so that
\begin{equation}\label{EQ:lambdas}
0=\lambda_{0}<\lambda_{1}<\lambda_{2}<\cdots.
\end{equation}
For each eigenvalue $\lambda_j$, there is
the corresponding finite dimensional eigenspace $H_j$ of functions on $M$, which are smooth due to the 
ellipticity of $E$. We set 
$$
d_j:=\dim H_j, 
\textrm{ and } H_0:=\ker E, \; \lambda_0:=0.
$$
We also set $d_{0}:=\dim H_{0}$. Since the operator $E$ is elliptic, it is Fredholm,
hence also $d_{0}<\infty$ (we can refer to \cite{Atiyah:global-aspects-1968}, \cite{ho:apde2} for
various properties of $H_{0}$ and $d_{0}$).

We fix  an orthonormal basis of $L^2(M)$ consisting of eigenfunctions of $E$:
\beq\label{fam}\{e^k_j\}_{j\geq 0}^{1\leq k\leq d_j},\eq 
where $\{e^k_j\}^{1\leq k\leq d_j}$ is an orthonormal basis of $H_j$. 
Let $P_j:L^2(M)\rightarrow H_j$ be the corresponding projection. 
We shall denote by $(\cdot,\cdot)$ the inner product of $L^2(M)$. 
 We observe that we have
 \[P_jf=\sum\limits_{k=1}^{d_j}(f,e_j^k) e_j^k,\]
for $f\in L^2(M)$. The `Fourier' series takes the form 
\[f=\sum\limits_{j=0}^{\infty}\sum\limits_{k=1}^{d_j}(f,e_j^k )e_j^k,\]
for each $f\in L^2(M)$.
The Fourier coefficients of $f\in L^2(M)$ with respect to the orthonormal basis $\{e^k_j\}$ 
will be denoted by 
\begin{equation}\label{EQ:F-coeff}
(\efee f)(j,k):=\widehat{f}(j,k):=(f,e_j^k).
\end{equation}
We will call the collection of $\widehat{f}(j,k)$ the {\em Fourier coefficients of $f$ relative to $E$},
or simply the {\em Fourier coefficients of $f$}.

\smallskip
Since $\{e^k_j\}_{j\geq 0}^{1\leq k\leq d_j}$ forms a complete orthonormal 
system in  $L^2(M)$, for all $f\in L^2(M)$ we have the Plancherel formula
\eqref{EQ:Plancherel}, namely,
\beq \label{EQ:Plancherel2}
\|f\|^2_{L^{2}(M)}=\sum\limits_{j=0}^{\infty}\sum\limits_{k=1}^{d_j}|( f,e_j^k)|^2
=  \sum\limits_{j=0}^{\infty}\sum\limits_{k=1}^{d_j}|\widehat{f}(j,k)|^{2}
=\|\widehat{f}\|^{2}_{\ell^2(\ene_0,\Sigma)},
\eq
where the space $\ell^2(\ene_0,\Sigma)$ and its norm are as in
\eqref{EQ:aux3} and \eqref{EQ:aux4}.

We can think of $\efee=\efee_{M}$ as of the Fourier transform being an isometry
from $L^2(M)$ into 
$\ell^2(\ene_0,\Sigma)$. The inverse of this Fourier transform can be then expressed by 
\beq\label{Fourier1inv}
(\efee^{-1}{h})(x)=\sum\limits_{j= 0}^{\infty}\sum\limits_{k=1}^{d_j}h(j,k)e_j^k(x).
\eq
If $f\in L^2(M)$, we also write
\[\widehat{f}(j)=\left(\begin{array}{c}\widehat{f}(j,1)\\
\vdots\\
\widehat{f}(j,d_j)\end{array} \right)\in\ce^{d_j},\]
thus thinking of the Fourier transform always as a column vector.
In particular, we think of 
\[\widehat{e_j^k}(\ell)=\left(\widehat{e_j^k}(\ell,m)\right)_{m=1}^{d_{\ell}}\]
as of a column, and we notice that
\beq
\widehat{e_j^k}(\ell,m)=\delta_{j\ell}\delta_{km}.
\label{eqdeltas}
\eq
Smooth functions on $M$ can be characterised by
\begin{align}\label{EQ:smooth}
f\in C^{\infty}(M)  & \Longleftrightarrow 
\forall N \; \exists C_{N}: \; |\widehat{f}(j,k)|\leq C_{N} (1+\lambda_{j})^{-N}
\textrm{ for all } j, k \\ \nonumber
& \Longleftrightarrow 
\forall N \; \exists C_{N}: \; |\widehat{f}(j)|\leq C_{N} (1+\lambda_{j})^{-N}
\textrm{ for all } j,
\end{align}
where $|\widehat{f}(j)|$ is the norm of the vector $\widehat{f}(j)\in\C^{d_{j}}.$
The implication `$\Longleftarrow$' here is immediate, while `$\Longrightarrow$'
follows from the Plancherel formula \eqref{EQ:Plancherel} and the fact that
for $f\in C^{\infty}(M)$ we have $(I+E)^{N}f\in L^{2}(M)$ for any $N$.

\smallskip
For $u\in \Dcal'(M)$, we denote its Fourier coefficient 
$$\widehat{u}(j,k):=u(\overline{e_{j}^{k}}),$$ and by duality, the space of distributions
can be characterised by
$$
f\in \Dcal'(M)   \Longleftrightarrow 
\exists M \; \exists C: \; |\widehat{u}(j,k)|\leq C(1+\lambda_{j})^{M}
\textrm{ for all } j, k.
$$

We will denote by $H^{s}(M)$ the usual Sobolev space over $L^{2}$ on $M$.
This space can be defined in local coordinates or, by the fact that 
$E\in \Psi^{\nu}_{+e}(M)$ is positive and elliptic with $\nu>0$, it can be
characterised by
\begin{multline}\label{EQ:Sob-char}
 f\in H^{s}(M) \Longleftrightarrow (I+E)^{s/\nu} f\in L^{2}(M)
 \Longleftrightarrow
 \{(1+\lambda_{j})^{s/\nu}\widehat{f}(j)\}_{j} \in \ell^{2}(\N_{0},\Sigma) \\
 \Longleftrightarrow
 \sum\limits_{j=0}^{\infty}\sum\limits_{k=1}^{d_j}
 (1+\lambda_{j})^{2s/\nu}|\widehat{f}(j,k)|^{2}<\infty.
\end{multline}
the last equivalence following from the Plancherel formula
\eqref{EQ:Plancherel}.
For the characterisation of analytic functions (on compact manifolds $M$) 
we refer to Seeley \cite{see:exp}.


\section{Invariant operators and symbols on compact manifolds}
\label{SEC:invariant}

We now discuss an application of a notion of an invariant operator and of its symbol
from Theorem \ref{THM:inv-rem} in the case of 
$\Hcal=L^{2}(M)$ and $\Hcal^{\infty}=C^{\infty}(M)$ and describe its basic
properties. 
We will consider operators $T$ densely defined on $L^{2}(M)$, and we will be making
a natural assumption that their domain contains $C^{\infty}(M)$.
We also note that while in Theorem \ref{THM:inv-rem2} it was assumed that the
operator $\En$ is bounded on $\Hcal$, this is no longer the case for the operator $E$ here.
Indeed, an elliptic  pseudo-differential operator $E\in\Psi_{+e}^{\nu}(M)$ of order
$\nu>0$ is not bounded on $L^{2}(M)$. 

Moreover, we do not want to assume that $T$ extends to a bounded operator on 
$L^{2}(M)$
to obtain analogues of properties (D) and (F) in Section \ref{SEC:abstract},
because this is too restrictive from the point of view of differential operators.
Instead, we show that in the present setting it is enough to assume that
$T$ extends to a continuous operator on $\Dcal'(M)$ to reach the same
conclusions.

So, we combine the statement of Theorem \ref{THM:inv-rem} and the necessary 
modification of Theorem \ref{THM:inv-rem2} to the setting of
Section \ref{SEC:Fourier} as follows.

We also remark that Part (iv) of the following theorem provides a correct formulation for
a missing assumption in \cite[Theorem 3.1, (iv)]{Delgado-Ruzhansky:CRAS-kernels}.

\begin{thm}\label{THM:inv}
Let $M$ be a closed manifold and 
let $T:\cinfm\to L^{2}(M)$
be a linear operator.
Then the following
conditions are equivalent:
\begin{itemize}
\item[(i)] For each $j\in\ene_0$, we have $T(H_j)\subset H_j$. 
\item[(ii)]
For each $j\in\ene_{0}$ and $1\leq k\leq j$, we have
$TE e_{j}^{k}=ET e_{j}^{k}.$
\item[(iii)] For each $\ell\in\ene_0$ there exists a matrix 
$\sigma(\ell)\in\ce^{d_{\ell}\times d_{\ell}}$ such that for all $e_j^k$ 
\beq\label{invadef}\widehat{Te_j^k}(\ell,m)=\sigma(\ell)_{mk}\delta_{j\ell}.\eq
\item[(iv)]  If, in addition, the domain of $T^*$ contains $C^\infty(M)$, then for each $\ell\in\ene_0 $ there exists a matrix 
$\sigma(\ell)\in\ce^{d_{\ell}\times d_{\ell}}$ such that
 \[\widehat{Tf}(\ell)=\sigma(\ell)\widehat{f}(\ell)\]
 for all $f\in\cinfm.$
\end{itemize}
The matrices $\sigma(\ell)$ in {\rm (iii)} and {\rm (iv)} coincide.

\smallskip
If $T$
extends to a linear continuous operator
$T:\Dcal'(M)\rightarrow \Dcal'(M)$ then
the above properties are also equivalent to the following ones:
\begin{itemize}
\item[(v)] For each $j\in\ene_0$, we have
$TP_j=P_jT$ on $C^{\infty}(M)$.
\item[(vi)]  $TE=ET$ on $L^{2}(M)$.
\end{itemize}
\end{thm}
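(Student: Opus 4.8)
The plan is to prove Theorem \ref{THM:inv} by reducing it to the abstract results of Section \ref{SEC:abstract}, handling carefully the single point where that reduction fails: the operator $E$ is now unbounded on $\Hcal=L^2(M)$. For the equivalence of (i)--(iv), I would observe that these four conditions are verbatim the conditions (A)--(C) of Theorem \ref{THM:inv-rem}, applied with $\Hcal=L^2(M)$ and $\Hcal^\infty=C^\infty(M)$. Specifically, (i) is (A), (iii) is (B), and (iv) is (C); the basis functions $e_j^k$ lie in $C^\infty(M)$ by the ellipticity of $E$, and the hypothesis in (iv) that $\mathrm{Dom}(T^*)\supset C^\infty(M)$ is exactly the additional assumption imposed in (C). Thus (i) $\Leftrightarrow$ (iii) $\Leftrightarrow$ (iv) and the coincidence of the symbol matrices follow immediately from Theorem \ref{THM:inv-rem}, with no new work.

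The condition (ii) plays the role of (E) in Theorem \ref{THM:inv-rem2}, so I would next prove (i) $\Leftrightarrow$ (ii). Here the implication (i) $\Rightarrow$ (ii) is purely algebraic and goes exactly as in the proof (A) $\Rightarrow$ (E): writing $Te_j^k=\sum_i \alpha_i e_j^i$ by (i), and applying $E$, which acts as the scalar $\lambda_j$ on all of $H_j$, gives $ETe_j^k=\lambda_j Te_j^k=TEe_j^k$. For (ii) $\Rightarrow$ (i), the argument of (E) $\Rightarrow$ (A) carries over unchanged because it only uses that $e_j^k$ are eigenvectors and that $H_j$ is the \emph{maximal} eigenspace for $\lambda_j$: if $Te_j^k\neq 0$ then $ETe_j^k=\lambda_j Te_j^k$ forces $Te_j^k$ to be an eigenvector for $\lambda_j$, hence $Te_j^k\in H_j$. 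Crucially, this direction never needs $E$ to be bounded, so the unboundedness of $E$ causes no difficulty here; and by \eqref{EQ:lambdas} the eigenvalues are already distinct, so the nondegeneracy hypothesis $\lambda_j\neq\lambda_\ell$ is automatic.

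The substantive part is the final block: under the standing assumption that $T$ extends to a continuous operator $T:\mathcal{D}'(M)\to\mathcal{D}'(M)$, proving the equivalence of (v) and (vi) with (i)--(iv). The difficulty is that in Theorem \ref{THM:inv-rem} the analogues (D) and (F) required boundedness of $T$ on $\Hcal$, which I am explicitly declining to assume. The idea is that continuity on $\mathcal{D}'(M)$ is a workable substitute: for (i) $\Rightarrow$ (v), the proof (A) $\Rightarrow$ (D) used $L^2$-boundedness to sum the series $Tg=\sum_{\ell\neq j}(g,e_\ell^k)Te_\ell^k$ and conclude $Tg\perp H_j$; here I would instead expand $g\in H_j^\perp\cap C^\infty(M)$ in the Fourier series \eqref{EQ:ser}, which converges in $C^\infty(M)$ (equivalently, in $\mathcal{D}'(M)$ by \eqref{EQ:smooth}), and use continuity of $T$ on $\mathcal{D}'(M)$ to pass $T$ through the sum, getting $Tg\in\overline{\bigoplus_{\ell\neq j}H_\ell}$ in the distributional topology, whence $P_jTg=0$. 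The converse (v) $\Rightarrow$ (i) is as before: $f\in H_j$ gives $Tf=TP_jf=P_jTf\in H_j$. For (vi), since (i)--(iv) already yield (ii), it suffices to show (ii) together with the $\mathcal{D}'$-continuity gives $TE=ET$ on $L^2(M)$, and conversely; this is the $\mathcal{D}'$-analogue of (E) $\Leftrightarrow$ (F), where again the density of $C^\infty(M)$ and continuity of both $TE$ and $ET$ on the appropriate spaces let me upgrade the identity from basis elements to all of $L^2(M)$. The main obstacle, and the one point demanding care, is precisely this topological bookkeeping: verifying that continuity on $\mathcal{D}'(M)$ (rather than boundedness on $L^2$) genuinely suffices to justify interchanging $T$ with the infinite Fourier expansions, which relies on the characterisations \eqref{EQ:smooth} of the $C^\infty$ and $\mathcal{D}'$ topologies in terms of the decay of Fourier coefficients in $\lambda_j$.
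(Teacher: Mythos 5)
Your proposal is correct and follows essentially the same route as the paper: (i), (iii), (iv) are delegated to Theorem \ref{THM:inv-rem}, (i) $\Leftrightarrow$ (ii) repeats the arguments of Theorem \ref{THM:inv-rem2} using maximality of the eigenspaces $H_j$ and the automatic distinctness \eqref{EQ:lambdas}, and the equivalences with (v), (vi) are obtained by passing $T$ through Fourier expansions using continuity on $\Dcal'(M)$ in place of $L^2$-boundedness. The only cosmetic difference is that you expand smooth elements of $H_j^{\bot}$ with convergence in $C^{\infty}(M)$, whereas the paper expands arbitrary $g\in H_j^{\bot}$ in $L^2(M)$ and uses that $L^2$-convergence implies $\Dcal'$-convergence; both justifications are valid.
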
 

If any of the equivalent conditions (i)--(iv) of Theorem \ref{THM:inv} are satisfied, 
we say that the operator $T:\cinfm\rightarrow L^{2}(M)$ 
is {\em invariant (or is a Fourier multiplier) relative to $E$}.
We can also say 
that $T$ is $E$-invariant or is an $E$-multiplier. 
This recovers the notion of invariant operators 
given by Theorem \ref{THM:inv-rem}, with respect to the partitions
$H_{j}$'s in \eqref{EQ:sum} which are fixed being the eigenspaces of $E$.
When there is no risk of confusion we will just refer to such kind of operators 
as invariant operators or as multipliers.
It is clear from (i) that the operator $E$ itself or functions of $E$ defined
by the functional calculus are invariant relative to $E$. 

We note that the boundedness of $T$ on $L^{2}(M)$ needed for conditions
(D) and (F) in Theorem \ref{THM:inv-rem} and in Theorem \ref{THM:inv-rem2}
is now replaced by the condition that $T$ is continuous on $\Dcal'(M)$ which
explored the additional structure of $L^{2}(M)$ and allows application to
differential operators.

We call $\sigma$ in (iii) and (iv) the {\em matrix symbol of $T$} or simply 
the {\em symbol}. It is an element of the space $\Sigma=\Sigma_{M}$ defined by
\begin{equation}\label{EQ:Sigma}
\Sigma_M:=\{\sigma:\N_{0}\ni\ell\mapsto\sigma(\ell)\in \ce^{d_{\ell}\times d_{\ell}}\}.
\end{equation}
Since the expression for the symbol depends only on the basis $e_{j}^{k}$ and not
on the operator $E$ itself, this notion coincides with the symbol defined in
Theorem \ref{THM:inv-rem}.

Let us comment on several conditions in Theorem \ref{THM:inv} in this setting.
Assumptions (v) and (vi) are stronger than those in (i)--(iv).
On one hand, clearly (vi) contains (ii). On the other hand, 
as we will see in the proof, assumption (v) implies (i) without the 
additional hypothesis that $T$ is continuous on $\Dcal'(M)$.

In analogy to the strong commutativity in (v), {\em if $T$ is continuous on $\Dcal'(M)$},
so that all the assumptions (i)--(vi) are equivalent, we may say that
$T$ is {\em strongly invariant relative to $E$} in this case.

The expressions in (vi) make sense as both sides are defined (and even continuous) on
$\Dcal'(M)$. 

We also note that without additional assumptions, it is known from the
general theory of densily defined operators on Hilbert spaces that conditions
(v) and (vi) are generally not equivalent, see e.g.
Reed and Simon \cite[Section VIII.5]{r-s:vol1}.
If $T$ is a differential operator, the additional assumption of continuity
on $\Dcal'(M)$ for 
parts (v) and (vi) is satisfied.  In
\cite[Section 1, Definition 1]{Greenfield-Wallach:hypo-TAMS-1973} 
Greenfield and Wallach
called a differential operator $D$ to be an $E$-invariant operator if  $ED=DE$,
which is our condition (vi). However, Theorem  \ref{THM:inv} describes more
general operators as well as reformulates them in the form of Fourier multipliers
that will be explored in the sequel.

There will be several useful classes of symbols, in particular the moderate growth
class
\begin{equation}\label{EQ:sym-tempe}
{\mathcal S}'(\Sigma):=\{\sigma\in\Sigma:
\exists N, C \textrm{ such that }
\|\sigma(\ell)\|_{op}\leq C(1+\lambda_{\ell})^{N} \; \forall \ell\in\N_{0}
\},
\end{equation}
where $$ \|\sigma(\ell)\|_{op}=\|\sigma(\ell)\|_{{\mathscr L}(H_{\ell})}$$ 
denotes the matrix multiplication
operator norm with respect to $\ell^2(\ce^{d_{\ell}})$.

In the case when $M$ is a compact Lie group and $E$ is a Laplacian on $G$,
left-invariant operators on $G$, i.e. operators commuting with the left action of $G$,
are also invariant relative to $E$ in the sense of Theorem \ref{THM:inv};
this will be shown in Proposition \ref{scheq1}  after we investigate in 
Section \ref{SEC:Lie-groups} the relation
between the symbol in Theorem \ref{THM:inv} and matrix symbols of operators
on compact Lie groups. However, we need an adaptation of the above construction
since the natural decomposition into $H_{j}$'s in \eqref{EQ:sum} may in general violate
the condition \eqref{EQ:lambdas}.

As in Section \ref{SEC:abstract} since the notion of the symbol depends only
on the basis, for the identity operator $T=I$ we have $$\sigma_{I}(j)=I_{d_{j}},$$
where $I_{d_{j}}\in \C^{I_{d_{j}}\times I_{d_{j}}}$ is the identity matrix, and 
for an operator $T=F(E)$, when it is well-defined by the spectral calculus, we have
\begin{equation}\label{EQ:symbol-F}
\sigma_{F(E)}(j)=F(\lambda_{j}) I_{d_{j}}.
\end{equation}

\begin{proof}[Proof of Theorem  \ref{THM:inv}]
Once the basis $e_{j}^{k}$ is fixed,
the equivalence of (i), (ii) and (iv) follows from the equivalence of 
(A), (B) and (C) in Theorem \ref{THM:inv-rem}.

\smallskip
(ii) $\Longrightarrow$ (i).
We first note that both $ET$ and $TE$ are well-defined on $e_{j}^{k}$: for the former,
since $e_{j}^{k}$ is smooth, we have $Te_{j}^{k}\in L^{2}(M)$ and hence in $\Dcal'(M)$
where $E$ is well-defined as a pseudo-differential operator, while, for the latter, 
$E e_{j}^{k}=\lambda_{j} e_{j}^{k}\in H_{j}\subset C^{\infty}(M)$ and hence it is
in the domain of $T$.
The rest of the proof is identical to (E) $\Longrightarrow$ (A) in the proof of
Theorem \ref{THM:inv-rem2}.

\smallskip
(i) $\Longrightarrow$ (ii). This is the same as (A) $\Longrightarrow$ (E) in the proof of
Theorem \ref{THM:inv-rem2}.

\smallskip
(v) $\Longrightarrow$ (i). We 
take $f\in H_{j}.$ Then $P_{j} f=f\in C^{\infty}(M)$ so that by assumption (v)
we have
$$
Tf=TP_{j}f=P_{j}T f\in H_{j},
$$
implying (i).

\smallskip
(i) $\Longrightarrow$ (v).
We now assume in addition that $T$ is continuous on $\Dcal'(M)$.
First, we show that (i) implies that for any $g\in H_j^{\bot}\subset L^{2}(M)$, we have 
$\langle Tg,\overline{e_{j}^{k}}\rangle=0$
in the sense of distributions.
We can write 
\[
g=\sum\limits_{\ell\neq j}\sum\limits_{k=1}^{d_{\ell}}(g,e_{\ell}^k)e_{\ell}^k
\]
with the convergence in $L^{2}(M)$.
Hence
\[
Tg=\sum\limits_{\ell\neq j}\sum\limits_{k=1}^{d_{\ell}}(g,e_{\ell}^k)Te_{\ell}^k
\] 
with the convergence in $\Dcal'(M)$.
Since $Te_{\ell}^k\in H_{\ell}\subset H_j^{\bot} $ for $\ell\neq j$ we conclude that 
$Tg$ is orthogonal to $H_{j}$.

Let now $f\in C^{\infty}(M)$. Writing
$f=f_{1}+f_{2}$ with $f_{1}=P_{j}f$ so that 
$f_{1}\in H_{j}$ and $f_{2}\in H_{j}^{\bot}$ are necessarily smooth, and
$P_{j}f_{2}=0$, we have
$$
P_{j}Tf=P_{j}Tf_{1}+P_{j}Tf_{2}=Tf_{1}=TP_{j}f,
$$
since the above property implies that $P_{j}Tf_{2}=0$.

\smallskip
(vi) $\Longrightarrow$ (ii). Trivial.

\smallskip
(ii) $\Longrightarrow$ (vi).
For the following, we assume that $T$ is continuous on $\Dcal'(M)$.
Let us write $S:=E\circ T, D:=T\circ E$ and let $f\in L^2(M)$. We can write 
\[
f=\sum\limits_{j=0}^{\infty}\sum\limits_{k=1}^{d_j}(f,e_j^k)e_j^k
\]
with the series convergent in $L^{2}(M)$. 
Since both $S$ and $D$ are continuous on $\Dcal'(M)$, we now have
\[
Sf=\lim\limits_N\sum\limits_{j=0}^{N}\sum\limits_{k=1}^{d_j}(f,e_j^k)Se_j^k
=\lim\limits_N\sum\limits_{j=0}^{N}\sum\limits_{k=1}^{d_j}(f,e_j^k)De_j^k
=Df.
\]
The limit should be understood in $\Dcal'(M)$. Indeed, if we write 
\[
f_N=\sum\limits_{j=0}^{N}\sum\limits_{k=1}^{d_j}(f,e_j^k)e_j^k,
\]
then $f_N\rightarrow f$ in $L^2$ and hence also in $\Dcal'(M)$, 
which implies $S f_{N}\to Sf$ and $Df_{N}\to Df$ in $\Dcal'(M)$.
 \end{proof}

We now discuss how invariant operators can be expressed in terms of their symbols.

\begin{prop}\label{PROP:quant}
An invariant operator $T_{\sigma}$ associated to the symbol $\sigma$ can be written 
in the following way:
\begin{align}
T_{\sigma}f(x)=&\sum\limits_{\ell=0}^{\infty}\sum\limits_{m=1}^{d_{\ell}}(\sigma(\ell)\widehat{f}(\ell))_me_{\ell}^m(x)\label{form23}\\
=&\sum\limits_{\ell=0}^{\infty}[\sigma(\ell)\widehat{f}(\ell)]^{\top} e_{\ell}(x),\nonumber
\end{align}
where $[\sigma(\ell)\widehat{f}(\ell)]$ denotes the column-vector, and 
$[\sigma(\ell)\widehat{f}(\ell)]^{\top}e_{\ell}(x)$ denotes the multiplication
(the scalar product)
of the column-vector $[\sigma(\ell)\widehat{f}(\ell)]$ with the column-vector
$e_{\ell}(x)=(e_{\ell}^{1}(x),\cdots, e_{\ell}^{m}(x))^{\top}$.
In particular, we also have
\begin{equation}\label{EQ:Tsigma-e}
(T_{\sigma}e_{j}^{k})(x)=\sum\limits_{m=1}^{d_{j}} \sigma(j)_{mk}e_{j}^{m}(x).
\end{equation}
If $\sigma\in {\mathcal S}'(\Sigma)$ and $f\in C^{\infty}(M)$, the convergence in
\eqref{form23} is uniform.
\end{prop}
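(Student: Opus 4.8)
The plan is to derive the quantization formula \eqref{form23} directly from the Fourier inversion formula \eqref{Fourier1inv} together with property (iv) of Theorem \ref{THM:inv}, and then to obtain the uniform convergence under the hypothesis $\sigma\in{\mathcal S}'(\Sigma)$ by a Cauchy-sequence estimate using the rapid decay of Fourier coefficients of smooth functions. First I would take $f\in C^{\infty}(M)$, so that $\widehat{f}(\ell)=\sigma(\ell)\widehat{f}(\ell)$ is replaced by $\widehat{T_{\sigma}f}(\ell)=\sigma(\ell)\widehat{f}(\ell)$ by part (iv). Applying the inversion formula \eqref{Fourier1inv} to $T_{\sigma}f$ gives
\[
(T_{\sigma}f)(x)=\sum_{\ell=0}^{\infty}\sum_{m=1}^{d_{\ell}}\widehat{T_{\sigma}f}(\ell,m)\,e_{\ell}^{m}(x)
=\sum_{\ell=0}^{\infty}\sum_{m=1}^{d_{\ell}}(\sigma(\ell)\widehat{f}(\ell))_{m}\,e_{\ell}^{m}(x),
\]
which is exactly \eqref{form23}; the second line of \eqref{form23} is merely the rewriting of the inner sum over $m$ as the scalar product of the column-vectors $[\sigma(\ell)\widehat{f}(\ell)]$ and $e_{\ell}(x)$. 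The special case \eqref{EQ:Tsigma-e} then follows by setting $f=e_{j}^{k}$ and using $\widehat{e_{j}^{k}}(\ell)=\delta_{j\ell}$ in the vector sense from \eqref{eqdeltas}, so that only the term $\ell=j$ survives and $(\sigma(j)\widehat{e_{j}^{k}}(j))_{m}=\sigma(j)_{mk}$.

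For the uniform convergence I would estimate the tail of the series. The key point is that the eigenfunctions satisfy a pointwise bound; since each $e_{\ell}^{m}$ is smooth and the eigenspaces are finite dimensional, one has a polynomial bound $|e_{\ell}^{m}(x)|\leq C(1+\lambda_{\ell})^{\alpha}$ for some fixed $\alpha$ uniformly in $x$, $m$, and $\ell$ (this is a standard consequence of local Weyl asymptotics / elliptic regularity for $E$, and can also be derived from the embedding $H^{s}(M)\hookrightarrow C(M)$ for $s>n/2$ combined with \eqref{EQ:Sob-char}). Writing $w_{\ell}(x):=\sum_{m=1}^{d_{\ell}}(\sigma(\ell)\widehat{f}(\ell))_{m}e_{\ell}^{m}(x)$ for the $\ell$-th term, I would bound
\[
|w_{\ell}(x)|\leq \Bigl(\sum_{m=1}^{d_{\ell}}|e_{\ell}^{m}(x)|^{2}\Bigr)^{1/2}\,\|\sigma(\ell)\widehat{f}(\ell)\|_{\ell^{2}(\ce^{d_{\ell}})}
\leq \Bigl(\sum_{m=1}^{d_{\ell}}|e_{\ell}^{m}(x)|^{2}\Bigr)^{1/2}\,\|\sigma(\ell)\|_{op}\,|\widehat{f}(\ell)|
\]
by Cauchy--Schwarz. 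The factor $\|\sigma(\ell)\|_{op}$ is at most $C(1+\lambda_{\ell})^{N}$ by \eqref{EQ:sym-tempe}, the eigenfunction bound contributes another polynomial factor in $(1+\lambda_{\ell})$, while \eqref{EQ:smooth} gives $|\widehat{f}(\ell)|\leq C_{N'}(1+\lambda_{\ell})^{-N'}$ for every $N'$. Choosing $N'$ large enough forces $\sum_{\ell}\sup_{x}|w_{\ell}(x)|<\infty$, so the series converges absolutely and uniformly by the Weierstrass $M$-test.

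The main obstacle is controlling the pointwise size of the eigenfunctions uniformly over the whole eigenspace and over $\ell$, i.e. obtaining the polynomial bound for $\bigl(\sum_{m}|e_{\ell}^{m}(x)|^{2}\bigr)^{1/2}$; this quantity is basis-independent (it is the restriction to the diagonal of the spectral projection kernel $\sum_{m}e_{\ell}^{m}(x)\overline{e_{\ell}^{m}(y)}$) and its polynomial control in $\lambda_{\ell}$ is the only genuinely analytic input beyond the algebra of the Fourier expansion. Everything else reduces to routine application of Cauchy--Schwarz, the operator-norm bound \eqref{EQ:sym-tempe}, and the rapid decay \eqref{EQ:smooth}. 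I would close by remarking that the same estimate shows the series can be differentiated term by term, but since only uniform convergence of \eqref{form23} is claimed, the argument above suffices.
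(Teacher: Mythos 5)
Your proof is correct and follows essentially the same route as the paper: formula \eqref{form23} is obtained from part (iv) of Theorem \ref{THM:inv} combined with the inversion formula \eqref{Fourier1inv}, the special case \eqref{EQ:Tsigma-e} from the delta relations \eqref{eqdeltas}, and the uniform convergence from the moderate growth condition \eqref{EQ:sym-tempe}. The only difference is one of detail: the paper simply asserts uniform convergence ``in view of \eqref{EQ:sym-tempe}'', while you supply a complete justification (Sobolev or local Weyl sup-bounds on eigenfunctions, the multiplicity bound $d_{\ell}\lesssim(1+\lambda_{\ell})^{n/\nu}$ of Proposition \ref{weyl2a}, Cauchy--Schwarz, rapid decay of $\widehat{f}$, and the Weierstrass $M$-test), which is a valid way to fill in exactly what the paper leaves implicit.
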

\begin{proof} 
Formula \eqref{form23} follows from Part (iv) of Theorem \ref{THM:inv},
with uniform convergence for $f\in C^{\infty}(M)$ in view of
\eqref{EQ:sym-tempe}.
Then, using \eqref{form23} and (\ref{eqdeltas}) we can calculate
\begin{align*}
(T_{\sigma}e_{j}^{k})(x)=&\sum\limits_{\ell=0}^{\infty}\sum\limits_{m=1}^{d_{\ell}}(\sigma(\ell)\widehat{e_{j}^{k}}(\ell))_me_{\ell}^m(x)\\ 
=&\sum\limits_{\ell=0}^{\infty}\sum\limits_{m=1}^{d_{\ell}}\left(\sum\limits_{i=1}^{d_{\ell}}(\sigma(\ell))_{mi}\widehat{e_{j}^{k}}(\ell,i)\right)e_{\ell}^m(x)\\
=&\sum\limits_{\ell=0}^{\infty}\sum\limits_{m=1}^{d_{\ell}}\sum\limits_{i=1}^{d_{\ell}}(\sigma(\ell))_{mi}\delta_{j\ell}\delta_{ki}e_{\ell}^m(x)\\
=&\sum\limits_{m=1}^{d_{j}}(\sigma(j))_{mk}e_{j}^m(x),
\end{align*}
yielding \eqref{EQ:Tsigma-e}.
\end{proof}

Theorem \ref{L2-abstract} characterising invariant operators bounded on 
$L^{2}(M)$ now becomes
\begin{thm}\label{L2} 
An invariant linear operator $T:\cinfm\rightarrow L^{2}(M)$ extends to a bounded
operator from $L^{2}(M)$ to $L^{2}(M)$ if and only if its symbol $\sigma$ satisfies
$$\sup\limits_{\ell\in\N_{0}}\|\sigma(\ell)\|_{op}<\infty,$$
where $ \|\sigma(\ell)\|_{op}=\|\sigma(\ell)\|_{{\mathscr L}(H_{\ell})}$ 
is the matrix multiplication
operator norm with respect to $H_{\ell}\simeq\ell^2(\ce^{d_{\ell}})$.
Moreover, we have
\[
\| T\|_{{\mathscr L}(L^{2}(M))} =\sup\limits_{\ell\in \N_{0}}\|\sigma(\ell)\|_{op}.
\]
\end{thm}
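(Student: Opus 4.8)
The plan is to read Theorem \ref{L2} as the specialization of the abstract Theorem \ref{L2-abstract} to the concrete data of Section \ref{SEC:Fourier}. All hypotheses of Theorem \ref{THM:inv-rem} are met with $\Hcal=L^{2}(M)$, $\Hcal^{\infty}=\cinfm$, the orthonormal basis $\{e_{j}^{k}\}$ of eigenfunctions of $E$, and $H_{j}$ the eigenspaces of $E$, so that $L^{2}(M)=\bigoplus_{j}H_{j}$ as in \eqref{EQ:sum}. As noted after \eqref{EQ:Sigma}, the notion of $E$-invariance of Theorem \ref{THM:inv} is exactly the abstract invariance relative to this partition and the matrix symbol $\sigma$ is the same object; hence both the boundedness criterion and the norm identity of Theorem \ref{L2} are precisely the content of Theorem \ref{L2-abstract} in this setting, and the proof is complete once this identification is made.

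For completeness I would spell out the two inequalities underlying the norm equality, transcribing the proof of Theorem \ref{L2-abstract}. For sufficiency, assuming $C:=\sup_{\ell}\|\sigma(\ell)\|_{op}<\infty$, I would use the multiplier relation $\widehat{Tf}(\ell)=\sigma(\ell)\widehat{f}(\ell)$ from part (iv) of Theorem \ref{THM:inv} together with the Plancherel formula \eqref{EQ:Plancherel2}: summing the block estimates $\|\sigma(\ell)\widehat{f}(\ell)\|^{2}\leq C^{2}\|\widehat{f}(\ell)\|^{2}$ over $\ell$ gives $\|Tf\|_{L^{2}(M)}\leq C\|f\|_{L^{2}(M)}$ on the dense subspace $\cinfm$, whence a bounded extension with $\|T\|_{{\mathscr L}(L^{2}(M))}\leq C$. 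For necessity, assuming $T$ bounded with $C:=\|T\|_{{\mathscr L}(L^{2}(M))}$, invariance makes each restriction $T|_{H_{\ell}}:H_{\ell}\to H_{\ell}$ well defined with $\|T|_{H_{\ell}}\|_{{\mathscr L}(H_{\ell})}\leq C$, and identifying $H_{\ell}$ with $\ce^{d_{\ell}}$ via the coordinate unitary $U$ one recovers $\sup_{\ell}\|\sigma(\ell)\|_{op}\leq C$.

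The one step deserving attention---and the part I would single out as the crux---is the passage between the operator norm of $T|_{H_{\ell}}$ and the matrix norm $\|\sigma(\ell)\|_{op}$ in the necessity direction: since $T|_{H_{\ell}}=U^{*}A(\ell)U$ with $A(\ell)$ induced by $\sigma(\ell)^{\top}$, this relies on the invariance of the operator norm under the unitary $U$ and under transposition, exactly as in the chain $\|\sigma(\ell)\|_{op}=\|\beta(\ell)\|_{op}=\|A(\ell)\|_{op}=\|T|_{H_{\ell}}\|_{{\mathscr L}(H_{\ell})}$ of the proof of Theorem \ref{L2-abstract}. The only conceptual difference from the abstract setting is that $E$ is here unbounded, but this is immaterial since the statement and its proof refer only to the partition $\{H_{j}\}$ and never to $E$ directly.
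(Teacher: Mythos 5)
Your proposal is correct and matches the paper exactly: the paper derives Theorem \ref{L2} as the immediate specialization of Theorem \ref{L2-abstract} to $\Hcal=L^{2}(M)$, $\Hcal^{\infty}=\cinfm$ and the eigenspace partition $\{H_{j}\}$, observing as you do that both the symbol and the boundedness criterion depend only on the basis $\{e_{j}^{k}\}$ and not on $E$ itself. Your transcription of the two directions (Plancherel plus the block estimates for sufficiency, and the unitary identification $T|_{H_{\ell}}=U^{*}A(\ell)U$ with $\|\sigma(\ell)\|_{op}=\|\beta(\ell)\|_{op}$ for necessity) reproduces the paper's proof of Theorem \ref{L2-abstract} step for step.
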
 

This can be extended to Sobolev spaces. We will use 
the multiplication property for Fourier multipliers which is a direct
consequence of Proposition \ref{comp1-abstract}:

\begin{prop}\label{comp1} 
If $S,T:C^{\infty}(M)\to L^{2}(M)$ are invariant operators with respect to $E$ 
such that the domain of $S\circ T$ contains $C^{\infty}(M)$,
then $S\circ T:C^{\infty}(M)\to L^{2}(M)$ is also 
invariant with respect to $E$. Moreover, if $\sigma_S$ denotes the symbol of 
$S$ and $\sigma_T$ denotes the symbols of $T$ 
with respect to  the same orthonormal basis then 
\[\sigma_{S\circ T}=\sigma_S\sigma_T,\]
i.e. $\sigma_{S\circ T}(j)=\sigma_S(j)\sigma_T(j)$ for all $j\in\N_{0}.$
\end{prop}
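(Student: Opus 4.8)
The statement to prove is Proposition \ref{comp1}, which asserts that the composition of two $E$-invariant operators on $C^{\infty}(M)$ is again $E$-invariant, with symbol equal to the product of the symbols. Let me sketch the proof.

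\medskip

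The plan is to deduce this directly from the abstract composition result, Proposition \ref{comp1-abstract}, which was already established in the general Hilbert space setting. The key observation is that the notion of $E$-invariance from Theorem \ref{THM:inv} is, by construction, exactly the notion of invariance relative to the partition $\{H_{j}\}_{j\in\N_{0}}$ from Theorem \ref{THM:inv-rem}, where the $H_{j}$ are the eigenspaces of $E$. Concretely, I would specialise the abstract framework to $\Hcal=L^{2}(M)$ and $\Hcal^{\infty}=C^{\infty}(M)$, with the orthonormal basis $\{e_{j}^{k}\}$ of eigenfunctions of $E$ fixed in Section \ref{SEC:Fourier}. Since the symbol in Theorem \ref{THM:inv} was noted to coincide with the symbol from Theorem \ref{THM:inv-rem} (it depends only on the basis, not on $E$ itself), the conclusions transfer verbatim.

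\medskip

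First I would verify that the hypotheses of Proposition \ref{comp1-abstract} are met: $S$ and $T$ are invariant with respect to the same orthonormal partition $\{H_{j}\}$, and the domain of $S\circ T$ contains $C^{\infty}(M)=\Hcal^{\infty}$ by assumption. These are precisely the standing assumptions of Proposition \ref{comp1-abstract}. Applying that proposition immediately yields that $S\circ T$ is invariant relative to $\{H_{j}\}$, hence $E$-invariant in the sense of Theorem \ref{THM:inv}, and that the symbols multiply pointwise: $\sigma_{S\circ T}(j)=\sigma_{S}(j)\sigma_{T}(j)$ for all $j\in\N_{0}$. If one prefers a self-contained argument, the same conclusion follows by applying condition (iv) of Theorem \ref{THM:inv} twice: for $f\in C^{\infty}(M)$ one computes
\[
\widehat{(S\circ T)f}(\ell)=\sigma_{S}(\ell)\,\widehat{Tf}(\ell)=\sigma_{S}(\ell)\,\sigma_{T}(\ell)\,\widehat{f}(\ell),
\]
which exhibits $S\circ T$ as a Fourier multiplier with symbol $\sigma_{S}\sigma_{T}$ via part (iv) of Theorem \ref{THM:inv}.

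\medskip

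I do not anticipate a genuine obstacle here, since the work has already been done in the abstract setting. The one point requiring a moment of care is the domain condition for the composition: one must ensure that $Tf$ lies in the domain of $S$ for $f\in C^{\infty}(M)$, so that $(S\circ T)f=S(Tf)$ is meaningful and the intermediate step $\widehat{Tf}(\ell)=\sigma_{T}(\ell)\widehat{f}(\ell)$ can be fed into the identity for $S$. This is exactly what the hypothesis ``the domain of $S\circ T$ contains $C^{\infty}(M)$'' guarantees, mirroring the corresponding remark in the proof of Proposition \ref{comp1-abstract}. Thus the proof reduces to citing the abstract proposition after identifying the two notions of invariance.
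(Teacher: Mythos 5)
Your proposal is correct and follows exactly the paper's route: the paper states Proposition \ref{comp1} as a direct consequence of Proposition \ref{comp1-abstract}, whose proof is precisely the two-step application of the multiplier identity $\widehat{(S\circ T)f}(j)=\sigma_S(j)\widehat{Tf}(j)=\sigma_S(j)\sigma_T(j)\widehat{f}(j)$ that you give as your self-contained alternative. Your identification of the $E$-invariance of Theorem \ref{THM:inv} with invariance relative to the eigenspace partition of Theorem \ref{THM:inv-rem}, together with the domain remark for the composition, is the same reduction the paper intends.
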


Recalling Sobolev spaces $H^{s}(M)$ in \eqref{EQ:Sob-char} we have:

\begin{cor}\label{COR:Sobolev} 
Let an invariant linear operator $T:\cinfm\rightarrow C^{\infty}(M)$ 
have symbol $\sigma_{T}$ for which there exists $C>0$  and $m\in\mathbb R$ such that 
\[
\|\sigma_{T}(\ell)\|_{op}\leq C(1+\lambda_{\ell})^{\frac{m}{\nu}}
\]
holds for all $\ell\in\N_{0}$. Then $T$ extends to a bounded operator
from $H^{s}(M)$ to $H^{s-m}(M)$ for every $s\in\mathbb R$.
\end{cor}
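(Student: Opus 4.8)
The plan is to reduce this Sobolev estimate to the $L^{2}$-boundedness criterion of Theorem \ref{L2} by conjugating $T$ with suitable powers of $I+E$. First I would record that, since $E\in\Psi_{+e}^{\nu}(M)$ with $\nu>0$, for every $\alpha\in\mathbb{R}$ the operator $(I+E)^{\alpha}$ is well defined by the functional calculus, is invariant relative to $E$, and by \eqref{EQ:symbol-F} has symbol $\sigma_{(I+E)^{\alpha}}(\ell)=(1+\lambda_{\ell})^{\alpha}I_{d_{\ell}}$. The Sobolev characterisation \eqref{EQ:Sob-char} shows moreover that $(I+E)^{\alpha}$ is an isomorphism $H^{s}(M)\to H^{s-\alpha\nu}(M)$ for each $s$; in particular $(I+E)^{s/\nu}\colon H^{s}(M)\to L^{2}(M)$ and $(I+E)^{-(s-m)/\nu}\colon L^{2}(M)\to H^{s-m}(M)$ are isomorphisms with norms comparable to $1$.

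Next I would introduce the conjugated operator $\widetilde{T}:=(I+E)^{(s-m)/\nu}\circ T\circ(I+E)^{-s/\nu}$, defined on $C^{\infty}(M)$. All three factors are invariant relative to $E$, so Proposition \ref{comp1} applies and shows that $\widetilde{T}$ is again invariant, with symbol equal to the product of the three symbols. Since the symbols of the two powers are scalar multiples of the identity, this product is $\sigma_{\widetilde{T}}(\ell)=(1+\lambda_{\ell})^{(s-m)/\nu}(1+\lambda_{\ell})^{-s/\nu}\sigma_{T}(\ell)=(1+\lambda_{\ell})^{-m/\nu}\sigma_{T}(\ell)$. The hypothesis $\|\sigma_{T}(\ell)\|_{op}\leq C(1+\lambda_{\ell})^{m/\nu}$ then gives $\|\sigma_{\widetilde{T}}(\ell)\|_{op}\leq C$ uniformly in $\ell$, whence Theorem \ref{L2} yields that $\widetilde{T}$ extends to a bounded operator on $L^{2}(M)$.

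Finally I would unwind the conjugation: on $C^{\infty}(M)$ one has $T=(I+E)^{-(s-m)/\nu}\circ\widetilde{T}\circ(I+E)^{s/\nu}$, a composition of bounded maps $H^{s}(M)\to L^{2}(M)\to L^{2}(M)\to H^{s-m}(M)$, giving $\|Tf\|_{H^{s-m}}\leq C'\|f\|_{H^{s}}$; the density of $C^{\infty}(M)$ in $H^{s}(M)$ then provides the claimed bounded extension. I expect the only delicate point to be the bookkeeping of domains needed to legitimately apply Proposition \ref{comp1}, namely checking that the compositions genuinely make sense on $C^{\infty}(M)$ (which they do, since $(I+E)^{-s/\nu}$ preserves $C^{\infty}(M)$ and the remaining factors map it into $L^{2}(M)$), so that the symbol of $\widetilde{T}$ really is the product of the three symbols. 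An alternative, more direct route avoids the conjugation entirely: applying Part (iv) of Theorem \ref{THM:inv} together with the Plancherel-type characterisation \eqref{EQ:Sob-char}, one estimates $\|Tf\|_{H^{s-m}}^{2}=\sum_{\ell}(1+\lambda_{\ell})^{2(s-m)/\nu}\|\sigma_{T}(\ell)\widehat{f}(\ell)\|^{2}\leq C^{2}\sum_{\ell}(1+\lambda_{\ell})^{2s/\nu}\|\widehat{f}(\ell)\|^{2}=C^{2}\|f\|_{H^{s}}^{2}$ directly, which is in fact the cleanest argument but does not exploit the multiplication property recorded in Proposition \ref{comp1}.
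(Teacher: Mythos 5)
Your proof is correct and follows essentially the same route as the paper: the paper also conjugates $T$ by powers of $I+E$ to form $S=(I+E)^{\frac{s-m}{\nu}}\circ T\circ (I+E)^{-\frac{s}{\nu}}$, computes $\sigma_{S}(\ell)=(1+\lambda_{\ell})^{-\frac{m}{\nu}}\sigma_{T}(\ell)$ via Proposition \ref{comp1} and \eqref{EQ:symbol-F}, and concludes by Theorem \ref{L2}. Your extra remarks (the explicit unwinding of the conjugation and the direct Plancherel estimate via \eqref{EQ:Sob-char}) are sound but do not change the argument in any essential way.
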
 
\begin{proof}
We note that by \eqref{EQ:Sob-char} the condition that 
$T:H^{s}(M)\to H^{s-m}(M)$ is bounded is equivalent to 
the condition that the operator
$$S:=(I+E)^{\frac{s-m}{\nu}}\circ T\circ (I+E)^{-\frac{s}{\nu}}$$ is bounded
on $L^{2}(M)$.
By Proposition \ref{comp1} and the fact that the powers of $E$ are
pseudo-differential operators with diagonal symbols, 
see \eqref{EQ:symbol-F}, we have
$$
\sigma_{S}(\ell)=(1+\lambda_{\ell})^{-\frac{m}{\nu}}\sigma_{T}(\ell).
$$
But then $\|\sigma_{S}(\ell)\|_{op}\leq C$ for all $\ell$ in view of the assumption on
$\sigma_{T}$, so that
the statement follows from Theorem \ref{L2}.
\end{proof}

\section{Schatten classes of operators on compact manifolds}
\label{SEC:Schatten-mfds}

In this section we give an application of the constructions in the previous
section to determine the membership of operators in Schatten classes
and then apply it to a particular family of operators on $L^{2}(M)$.

As a consequence of Theorem \ref{schchr-abstract}, 
we can now characterise invariant operators in Schatten classes on compact manifolds. 
We note that this characterisation does not assume any regularity of the kernel nor
of the symbol. Once we observe that the conditions for the membership
in the Schatten classes depend only on the basis $e_{j}^{k}$ and not on 
the operator $E$, we immediately obtain:

\begin{thm}\label{schchr} Let $0<r<\infty$. 
An invariant operator $T:L^2(M)\rightarrow L^2(M)$ is in $S_r(L^2(M))$ 
if and only if  $\sum\limits_{\ell=0}^{\infty}\|\sigma_{T}(\ell)\|_{S_r}^r<\infty$. 
Moreover
\[\|T\|_{S_r(L^2(M))}^r=\sum\limits_{\ell=0}^{\infty}\|\sigma_{T}(\ell)\|_{S_r}^r.\]
If an invariant operator $T:L^2(M)\rightarrow L^2(M)$ is in the trace class
$S_1(L^2(M))$, then 
\[\Tr(T)=\sum\limits_{\ell=0}^{\infty}\Tr(\sigma_{T}(\ell)).\]
\end{thm}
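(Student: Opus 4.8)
The plan is to deduce Theorem~\ref{schchr} directly from the abstract result Theorem~\ref{schchr-abstract} by specialising the general Hilbert space setting to $\Hcal=L^{2}(M)$ and $\Hcal^{\infty}=C^{\infty}(M)$, with the partition $\{H_{j}\}_{j\in\N_{0}}$ taken to be the eigenspaces of the elliptic operator $E$ as constructed in Section~\ref{SEC:Fourier}. The key observation, already highlighted before the statement, is that the Schatten membership conditions in Theorem~\ref{schchr-abstract} depend only on the orthonormal basis $\{e_{j}^{k}\}$ and the associated symbol $\sigma$, and \emph{not} on the particular operator $E$ that generated the partition. So the first step is simply to verify that the hypotheses of Theorem~\ref{schchr-abstract} are met: an invariant operator $T\in{\mathscr L}(L^{2}(M))$ in the sense of Theorem~\ref{THM:inv} is invariant relative to the decomposition \eqref{EQ:sum}, and its symbol $\sigma_{T}$ is the same object appearing in Theorem~\ref{THM:inv-rem}.

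Once this identification is in place, the two displayed formulas are immediate transcriptions. For the Schatten norm, Theorem~\ref{schchr-abstract} gives
\[
\|T\|_{S_{r}(\Hcal)}^{r}=\sum_{\ell=0}^{\infty}\|\sigma(\ell)\|_{S_{r}(H_{\ell})}^{r},
\]
and here $H_{\ell}$ is finite dimensional of dimension $d_{\ell}$, so $\|\sigma(\ell)\|_{S_{r}(H_{\ell})}$ is exactly the Schatten quasinorm $\|\sigma_{T}(\ell)\|_{S_{r}}$ of the matrix $\sigma_{T}(\ell)\in\ce^{d_{\ell}\times d_{\ell}}$. This yields both the characterisation ($T\in S_{r}$ iff $\sum_{\ell}\|\sigma_{T}(\ell)\|_{S_{r}}^{r}<\infty$) and the norm identity. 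Similarly, the trace formula \eqref{EQ:Sch2} of Theorem~\ref{schchr-abstract} transcribes verbatim to $\Tr(T)=\sum_{\ell}\Tr(\sigma_{T}(\ell))$ for $T\in S_{1}(L^{2}(M))$.

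There is essentially no hard analytic obstacle here, since all the real work was carried out in the abstract Theorem~\ref{schchr-abstract}; the decomposition \eqref{EQ:T-lem} into eigenspace restrictions, the unitary intertwining $T|_{H_{\ell}}=U^{*}A(\ell)U$ identifying $\|T|_{H_{\ell}}\|_{S_{r}(H_{\ell})}$ with $\|\sigma(\ell)\|_{S_{r}}$, and the diagonal trace computation are already established there. The only point requiring a word of care is confirming that the notion of invariance used in Theorem~\ref{schchr} (property (i)--(iv) of Theorem~\ref{THM:inv}, invariance relative to $E$) coincides with the abstract invariance of Theorem~\ref{THM:inv-rem} relative to the partition into eigenspaces of $E$; but this is exactly the content of the remark following Theorem~\ref{THM:inv} that the symbol depends only on the basis, together with the fact that condition (i) is literally condition (A). Hence I would write the proof as a short paragraph invoking Theorem~\ref{schchr-abstract} after noting this identification, rather than repeating any of the eigenvalue or matrix-norm calculations.
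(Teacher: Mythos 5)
Your proposal is correct and matches the paper's own treatment: the paper states Theorem~\ref{schchr} as an immediate consequence of the abstract Theorem~\ref{schchr-abstract}, after exactly the observation you make, namely that Schatten membership and the symbol depend only on the basis $\{e_j^k\}$ (hence on the partition into the $H_\ell$'s) and not on the operator $E$ itself. No further argument is given or needed, since the eigenspace decomposition, the unitary identification $T|_{H_\ell}=U^*A(\ell)U$, and the trace computation were all carried out in the abstract setting.
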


\begin{rem}\label{rema} In Section \ref{SEC:Lie-groups} 
we will establish a relation between the notion of symbol introduced 
in Theorem \ref{THM:inv} 
and the corresponding symbol in the setting of 
compact Lie groups (cf. \cite{rt:book, rt:groups}). 
In particular the characterisation above extends the one obtained in 
Theorem 3.7 of \cite{dr13:schatten}.
\end{rem}

We now apply Theorem \ref{schchr} to determining which powers of $E$ belong
to which Schatten classes. But first we record a useful relation between
the sequences $\lambda_{j}$ and $d_{j}$ of eigenvalues of $E$ and their
multiplicities.

\begin{prop}\label{weyl2a}
Let $M$ be a closed manifold of dimension $n$, and let $E\in\Psi_{+e}^{\nu}(M)$, with $\nu>0$. 
Then there exists a constant $C>0$ such that we have
\begin{equation}\label{EQ:Weyl}
d_j\leq C (1+\lambda_j)^{\frac{n}{\nu}}
\end{equation}
for all $j\geq 1$.
Moreover, we also have
\beq
\label{asymp2}
\sum\limits_{j=1}^{\infty}d_j(1+\lambda_j)^{-q}<\infty \quad{\textrm{ if and only if }}\quad q>\frac{n}{\nu}.
\eq
\end{prop}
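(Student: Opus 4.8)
The only substantial ingredient I would use is Weyl's law for the elliptic positive operator $E\in\Psi_{+e}^{\nu}(M)$. Let $N(\lambda):=\sum_{1\le j,\ \lambda_j\le\lambda} d_j$ denote the eigenvalue counting function of $E$ (eigenvalues counted with multiplicity). The Weyl asymptotics for elliptic pseudo-differential operators of order $\nu$ on a closed $n$-manifold provide a constant $C_0>0$ with $N(\lambda)\sim C_0\,\lambda^{n/\nu}$ as $\lambda\to\infty$; see e.g. Shubin \cite{shubin:r}, H\"ormander \cite{ho:apde2}, or Seeley \cite{see:ex}. In particular there exist $0<c_1\le c_2$ and $\lambda_\ast>0$ such that $c_1\lambda^{n/\nu}\le N(\lambda)\le c_2\lambda^{n/\nu}$ for $\lambda\ge\lambda_\ast$. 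My plan is to isolate this as the sole deep input and to derive both assertions by elementary summation.

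For \eqref{EQ:Weyl} I would note that $H_j$ is one of the eigenspaces counted in $N(\lambda_j)$, so $d_j\le N(\lambda_j)$. The upper Weyl bound then gives $d_j\le c_2\lambda_j^{n/\nu}$ for $\lambda_j\ge\lambda_\ast$, and after enlarging the constant to absorb the finitely many indices $j\ge 1$ with $\lambda_j<\lambda_\ast$, this yields $d_j\le C(1+\lambda_j)^{n/\nu}$ for all $j\ge 1$. Only the upper half of Weyl's law is needed here.

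For \eqref{asymp2} I would write the series as a Stieltjes integral against $N$ and integrate by parts. Setting $S(q):=\sum_{j=1}^{\infty} d_j(1+\lambda_j)^{-q}=\int_{0^+}^{\infty}(1+t)^{-q}\,dN(t)$ and using $N(0^+)=0$ (since $\lambda_1>\lambda_0=0$), integration by parts gives
\[
S(q)=\lim_{t\to\infty}(1+t)^{-q}N(t)+q\int_{0}^{\infty}N(t)(1+t)^{-q-1}\,dt.
\]
With $N(t)\sim C_0 t^{n/\nu}$, the boundary term at infinity tends to $0$ exactly when $q>n/\nu$, while the integral is comparable to $\int^{\infty}t^{n/\nu}(1+t)^{-q-1}\,dt$, which is finite if and only if $q>n/\nu$. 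Hence $S(q)<\infty$ if and only if $q>n/\nu$, which is \eqref{asymp2}.

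The main obstacle is simply the dependence on Weyl's law, a deep but standard theorem that I would cite rather than reprove. The delicate point within the elementary part is the critical exponent $q=n/\nu$: the two-sided bound $c_1 t^{n/\nu}\le N(t)\le c_2 t^{n/\nu}$ alone does not force divergence, since bounding the increments $N(2^{k+1})-N(2^k)$ from below requires the genuine asymptotic equivalence $N(t)\sim C_0 t^{n/\nu}$. The integration-by-parts formulation handles this case automatically, as the integral then behaves like $\int^{\infty} t^{-1}\,dt$ and diverges.
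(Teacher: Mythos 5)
Your proof is correct, and for the second assertion it takes a genuinely different route from the paper. For \eqref{EQ:Weyl} the two arguments essentially coincide: the paper invokes the Weyl counting formula for the first-order operator $(I+E)^{1/\nu}$, you invoke it for $E$ directly, and both read off $d_j\le N(\lambda_j)\le C(1+\lambda_j)^{n/\nu}$. For \eqref{asymp2}, however, the paper stays on the operator side: it identifies the series with $\|(I+E)^{-q/2}\|_{S_2}^2$, notes that $(I+E)^{-q/2}\in\Psi^{-\nu q/2}(M)$ by the functional calculus, and deduces the equivalence from the near-diagonal kernel estimate $|K(x,y)|\le C_\alpha |x-y|^{-\alpha}$ for $\alpha>n-\nu q/2$ together with its sharpness, so that $K\in L^2(M\times M)$ precisely when some $\alpha$ satisfies $n>2\alpha>2n-\nu q$, i.e.\ when $q>n/\nu$. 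You instead stay on the spectral side: Abel summation (Stieltjes integration by parts) against the counting function converts the series into a boundary term plus $q\int_0^\infty N(t)(1+t)^{-q-1}\,dt$, and the Weyl bounds finish the argument. Your route is more elementary and self-contained, since it reuses only the input already needed for \eqref{EQ:Weyl}; it also cleanly separates which half of Weyl's law drives each implication (the upper bound for convergence, the lower bound for divergence), and it sidesteps the delicate point in the paper's proof, namely that the ``only if'' direction there rests on the asserted sharpness of the kernel singularity estimate. What the paper's approach buys in exchange is coherence with its overall theme of relating Schatten properties to kernels and symbols, the exact identity $\sum_j d_j(1+\lambda_j)^{-q}=\|(I+E)^{-q/2}\|_{S_2}^2$ being the same mechanism that later yields Proposition \ref{PROP:elliptic}. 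One small remark on your final paragraph: once the integration by parts is performed, the two-sided bounds $c_1t^{n/\nu}\le N(t)\le c_2t^{n/\nu}$ alone already suffice at the critical exponent, because $\int^\infty N(t)(1+t)^{-q-1}\,dt\ge c_1\int^\infty t^{n/\nu-q-1}\,dt=\infty$ for $q\le n/\nu$; the genuine asymptotic equivalence $N(t)\sim C_0t^{n/\nu}$ is nowhere needed in your argument, only in the naive dyadic-increment approach you rightly discard.
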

\begin{proof}
Since $(1+\lambda_j)^{1/\nu}$ are the eigenvalues of the first-order elliptic positive
operator $(I+E)^{1/\nu}$ with multiplicities $d_j$, the Weyl eigenvalue
counting formula for the operator $(I+E)^{1/\nu}$ gives
$$
\sum_{j:\ (1+\lambda_j)^{1/\nu}\leq \lambda} d_j=C_0\lambda^n+O(\lambda^{n-1})
$$
as $\lambda\to\infty$. This implies $d_j\leq C(1+\lambda_j)^{n/\nu}$ for sufficiently large
$\lambda_j$, implying the estimate \eqref{EQ:Weyl}.

To prove \eqref{asymp2},
let us denote $T:=(I+E)^{-q/2}$. Then the eigenvalues of $T$ are $(1+\lambda_j)^{-q/2}$ with
multiplicities $d_j$. This implies
\begin{equation}\label{EQ:HSK}
\sum\limits_{j=0}^{\infty}d_j(1+\lambda_j)^{-q}=\|T\|^2_{S_{2}}\asymp \|K\|^2_{L^2(M\times M)}.
\end{equation}
By the functional calculus of pseudo-differential operators, we have
$T\in \Psi^{-\nu q/2}(M)$, and so its integral 
kernel $K(x,y)$ is smooth for $x\not=y$, and near the diagonal
$x=y$, identifying points with their local coordinates, we have
$$
|K(x,y)|\leq C_\alpha |x-y|^{-\alpha},
$$
for any $\alpha>n-\nu q/2$, 
see e.g. \cite{Duis:BK-FIO-2011} or \cite[Theorem 2.3.1]{rt:book}. Thus
order is sharp with respect to the order of the operator.
Therefore, $K\in L^2(M\times M)$ if and only if there exists $\alpha$ such that
$n>2\alpha>2n-\nu q$. Together with \eqref{EQ:HSK} this implies \eqref{asymp2}.
\end{proof}

\begin{prop}\label{PROP:elliptic}
Let $M$ be a closed manifold of dimension $n$, and let $E\in \Psi^{\nu}_{+e}(M)$ be a
positive elliptic pseudo-differential operator of order $\nu>0$.
Let $0<p<\infty$.
Then we have
\begin{equation}\label{EQ:ell-powers}
(I+E)^{-\frac{\alpha}{\nu}}\in S_{p}(L^{2}(M)) \;\textrm{ if and only if }\;
\alpha>\frac{n}{p}.
\end{equation}
\end{prop}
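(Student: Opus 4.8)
The plan is to reduce the statement to the summability criterion of Proposition \ref{weyl2a} by computing the symbol of the invariant operator $(I+E)^{-\alpha/\nu}$ and then applying the Schatten characterisation of Theorem \ref{schchr}. Since $(I+E)^{-\alpha/\nu}=F(E)$ with $F(t)=(1+t)^{-\alpha/\nu}$ is well-defined by the spectral calculus and is a function of $E$, it is invariant relative to $E$ in the sense of Theorem \ref{THM:inv}, and by \eqref{EQ:symbol-F} its symbol is diagonal:
\[
\sigma(\ell)=(1+\lambda_\ell)^{-\alpha/\nu}\, I_{d_\ell}, \qquad \ell\in\N_{0}.
\]

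First I would compute the Schatten quasinorm of each block. Since $\sigma(\ell)$ is a scalar multiple of the $d_\ell\times d_\ell$ identity matrix, all its singular values equal $(1+\lambda_\ell)^{-\alpha/\nu}$, and there are exactly $d_\ell$ of them, so that
\[
\|\sigma(\ell)\|_{S_p}^{p}=d_\ell\,(1+\lambda_\ell)^{-\alpha p/\nu}.
\]
By Theorem \ref{schchr} (applied with $r=p$), the operator $(I+E)^{-\alpha/\nu}$ belongs to $S_p(L^2(M))$ if and only if
\[
\sum_{\ell=0}^{\infty}\|\sigma(\ell)\|_{S_p}^{p}
=\sum_{\ell=0}^{\infty}d_\ell\,(1+\lambda_\ell)^{-\alpha p/\nu}<\infty .
\]

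The term $\ell=0$ equals $d_0$ (as $\lambda_0=0$) and is finite, so the convergence of the series is governed entirely by the tail $\ell\geq 1$. I would then invoke the summability relation \eqref{asymp2} of Proposition \ref{weyl2a} with $q=\alpha p/\nu$, which asserts that the tail sum is finite precisely when $q>\frac{n}{\nu}$. This condition is equivalent to $\alpha p/\nu>n/\nu$, that is, to $\alpha>\frac{n}{p}$, which yields the claimed equivalence \eqref{EQ:ell-powers}.

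There is no serious obstacle here: the heavy lifting is carried out by Theorem \ref{schchr}, which turns Schatten membership into a block-diagonal sum, and by the Weyl-law estimate \eqref{asymp2}, which converts that sum into the arithmetic condition on $q$. The only point requiring a moment's care is confirming that $(I+E)^{-\alpha/\nu}$ genuinely satisfies the hypotheses of Theorem \ref{schchr}, namely that it is an invariant operator with the stated diagonal symbol; this is immediate from the functional-calculus formula \eqref{EQ:symbol-F}.
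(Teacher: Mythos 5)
Your proposal is correct and follows essentially the same route as the paper: both reduce the question to the convergence of $\sum_{j} d_j (1+\lambda_j)^{-\alpha p/\nu}$ and conclude via \eqref{asymp2} of Proposition \ref{weyl2a}. The only cosmetic difference is that you pass through the symbol $\sigma(\ell)=(1+\lambda_\ell)^{-\alpha/\nu} I_{d_\ell}$ and Theorem \ref{schchr}, whereas the paper reads off the singular values $(1+\lambda_j)^{-\alpha/\nu}$ with multiplicities $d_j$ directly from positivity of the operator; the resulting series and the final step are identical.
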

\begin{proof}
We note that the operator $(I+E)^{-\frac{\alpha}{\nu}}$ is positive definite, 
its singular values are $(1+\lambda_{j})^{-\frac{\alpha}{\nu}}$ with multiplicities 
$d_{j}$. Therefore, 
$$
\|(I+E)^{-\frac{\alpha}{\nu}}\|_{S_p}^p
= \sum\limits_{j=0}^{\infty}d_j (1+\lambda_j)^{-\frac{\alpha p}{\nu}},
$$
which is finite if and only if $\alpha p>n$ by (\ref{asymp2}),
implying the statement.
\end{proof}

\section{Relation to the setting of compact Lie groups}
\label{SEC:Lie-groups}

In the recent work \cite{dr13:schatten} the authors studied Schatten classes of operators on 
compact Lie groups. We now explore how the notion of the symbol from
Theorem \ref{THM:inv}
corresponds to the 
matrix-valued symbols on compact Lie groups, and how the results for Schatten classes
correspond to each other when $M=G$ is a compact Lie group. 
In this and the following sections we assume that all operators are continuous on $\Dcal'(G)$
so that the integral kernels of such operators are distributions.

We will give two types of decompositions of $L^{2}(G)$ into $H_{j}$'s as in
\eqref{EQ:sum}. First, we choose $H_{j}$'s determined by unitary irreducible
representations of $G$. However, in this case the condition
\eqref{EQ:lambdas} may fail. Consequently, to view this analysis 
as a special case of the construction on manifolds in Section \ref{SEC:invariant} with
condition \eqref{EQ:lambdas}, we group representations corresponding to the same
eigenvalue of the Laplacian together, to form a coarser decomposition of $L^{2}(G)$
into a direct sum of finite dimensional subspaces. The example of this types of
partitions is given in Remark \ref{REM:torus} in the case of the torus $\Tn$.

Now, we recall some basic definitions.
Let $G$ be a compact Lie group of dimension $n$ equipped
with the normalised Haar measure. 
Let $\widehat{G}$ denote the set of equivalence classes of continuous irreducible unitary 
representations of $G$. Since $G$ is compact, the set $\widehat{G}$ is discrete.  
For $[\xi]\in \widehat{G}$, by choosing a basis in the representation space of $\xi$, we can view 
$\xi$ as a matrix-valued function $\xi:G\rightarrow \ce^{d_{\xi}\times d_{\xi}}$, where 
$d_{\xi}$ is the dimension of the representation space of $\xi$. 
By the Peter-Weyl theorem the collection
$$
\left\{ \sqrt{d_\xi}\,\xi_{ij}: \; 1\leq i,j\leq d_\xi,\; [\xi]\in\Gh \right\}
$$
is the orthonormal basis of $L^2(G)$.
If $f\in L^1(G)$ we define its group Fourier transform at $\xi$ by 
 \beq\label{Fourier}
 \efee_{G} f(\xi)\equiv \widehat{f}(\xi):=\int_{G}f(x)\xi(x)^*dx,
 \eq  
where $dx$ is the normalised Haar measure on $G$. 
If $\xi$ is a matrix representation, we have $\widehat{f}(\xi)\in\ce^{d_{\xi}\times d_{\xi}} $. 
We note that this Fourier transform is different from the one we considered on manifolds
in \eqref{EQ:F-coeff} which produced vector-valued Fourier coefficients instead of the
matrix-valued ones obtained in \eqref{Fourier}.

The Fourier inversion formula is a consequence of the Peter-Weyl theorem, so that we have
\beq 
f(x)=\sum\limits_{[\xi]\in \widehat{G}}d_{\xi} \Tr(\xi(x)\widehat{f}(\xi)).
\eq

For each $[\xi]\in \widehat{G}$, the matrix elements of $\xi$ 
are the eigenfunctions for the Laplacian $\mathcal{L}_G$ 
(or the Casimir element of the universal enveloping algebra), 
with the same eigenvalues which we denote by 
$-\lambda^2_{[\xi]}$, so that we have
\begin{equation}\label{EQ:Lap-lambda}
-\mathcal{L}_G\xi_{ij}(x)=\lambda^2_{[\xi]}\xi_{ij}(x)\qquad\textrm{ for all } 1\leq i,j\leq d_{\xi}.
\end{equation} 
For a thorough discussion of Laplacians on compact Lie groups we refer to
\cite{Stein:BOOK-topics-Littlewood-Paley}.

The weight for measuring the decay or growth of Fourier coefficients in this setting is 
$\jp{\xi}:=(1+\lambda^2_{[\xi]})^{\half}$, the eigenvalues of the elliptic first-order pseudo-differential operator 
$(I-\mathcal{L}_G)^{\half}$.
The Parseval identity takes the form 
$$
\|f\|_{L^2(G)}= \left(\sum\limits_{[\xi]\in \widehat{G}}d_{\xi}\|\widehat{f}(\xi)\|^2_{\HS}\right)^{\half},\quad
\textrm{ where }\,\,
\|\widehat{f}(\xi)\|^2_{\HS}=\Tr(\widehat{f}(\xi)\widehat{f}(\xi)^*),$$
which defines the norm on 
$\ell^2(\widehat{G})$. 

For a linear continuous operator $A$ from $C^{\infty}(G)$ to $\mathcal{D}'(G) $ 
we define  its {matrix-valued symbol} $\tau_A(x,\xi)\in\cdxi$ by 
\begin{equation}\label{EQ:A-symbol}
\tau_A(x,\xi):=\xi(x)^*(A\xi)(x)\in\cdxi.
\end{equation}
Then one has (\cite{rt:book}, \cite{rt:groups}) the global quantization
\begin{equation}\label{EQ:A-quant}
Af(x)=\sum\limits_{[\xi]\in \widehat{G}}d_{\xi}\Tr(\xi(x)\tau_A(x,\xi)\widehat{f}(\xi))
\end{equation}
in the sense of distributions, and the sum is independent of the choice of a representation $\xi$ from each 
equivalence class 
$[\xi]\in \widehat{G}$. If $A$ is a linear continuous operator from $C^{\infty}(G)$ to $C^{\infty}(G)$,
the series \eqref{EQ:A-quant} is absolutely convergent and can be interpreted in the pointwise
sense. We will also write $A=\Op(\tau_A)$ for the operator $A$ given by
the formula \eqref{EQ:A-quant}.
We refer to \cite{rt:book, rt:groups} for the consistent development of this quantization
and the corresponding symbolic calculus.

In the case of a left-invariant operator $A$, its symbol $\tau_{A}$ is independent of $x$,
and formula \eqref{EQ:A-symbol} reduces to
\begin{equation}\label{EQ:A-symbol-inv}
\tau_A(\xi)=\xi(x)^*(A\xi)(x)=A\xi(e),
\end{equation}
where $e$ is the unit element of the group.

We can now establish a correspondence between the two frameworks, the one in this paper and the one given in \cite{dr13:schatten}. 
 In the setting of compact Lie groups the unitary dual being discrete, we can enumerate the representations as $\xi_j$ for $0\leq j<\infty .$
 The indices $(i,\ell)$ of each matrix $\xi(x)$ will be enumerated following the lexicographical order $((i,\ell)\leq (i',\ell ') \mbox{ if } i<i' \mbox{ or } (i=i'\mbox{ and } \ell\leq \ell '))$. In this way, we 
 fix the orthonormal basis $\{e_j^k\}$ given by 
\beq\label{ort11}
\{e_j^k\}_{1\leq k\leq d_j}=
\left\{\sqrt{d_{\xi _j}}(\xi _j)_{i \ell}\right\}_{1\leq i,\ell\leq d_{\xi_j}},\eq
where $d_j=d_{\xi_j}^2$ and $k$ represents an entry of the matrix of the 
representation following the lexicographical order described above. 
Then we have the subspaces
\begin{equation}\label{EQ:Hj-G}
H_{j}\equiv H_{[\xi_{j}]}:= {\rm span}\{ (\xi _j)_{i \ell}: \; 1\leq i,\ell\leq d_{\xi_j} \}.
\end{equation}

On a compact Lie group $G$ we can consider $E$ to be a bi-invariant Laplacian,
see Stein \cite{Stein:BOOK-topics-Littlewood-Paley} for a discussion of such operators.
Then, in view of the Peter-Weyl theorem,
the functions $\{e_j^k\}_{1\leq k\leq d_j}$ are its eigenfunctions, 
with norm one in $L^{2}(G)$ with respect to the normalised Haar measure, and
corresponding to the same eigenvalue $\lambda_{j}$.
However, the condition \eqref{EQ:lambdas} does not hold in general since 
non-equivalent representations in $\widehat{G}$ may give the same eigenvalues 
of the Laplacian.

We now observe that there is also a correspondence between the vector-valued Fourier transform introduced in \eqref{EQ:F-coeff} and the matrix-valued Fourier transform defined in (\ref{Fourier}). Such correspondence can be established by applying once more the lexicographical order to the matrix-valued Fourier transform (\ref{Fourier}). 

 In order to study such correspondence, for $d\in\ene$ we will define a bijection from the set of indices  of the matrix-symbol $\{1,\ldots,d\}^2$ onto the set of indices $\{1,\ldots,d^2\}$ and calculate its inverse. If $(j,k)\in\{1,\ldots,d\}^2$ we define
\[\Gamma_d(j,k):=(j-1)d+k.\]
The function $\Gamma_d$ is surjective, indeed if $t\in\{1,\ldots,d^2\}$, $j$ can be obtained from
\[j=\left\lfloor\frac{t-1}{d}\right\rfloor+1,\]
where $\lfloor \cdot \rfloor$ denotes the function defined for $x\geq 0$ by $\lfloor x \rfloor =\max\{y\in\ene_0:y\leq x\}$.

For the term $k$ we observe that
\[j-1=\left\lfloor\frac{t-1}{d}\right\rfloor,\]
hence 
\[k=t-\left\lfloor\frac{t-1}{d}\right\rfloor d.\]
Since we are dealing with finite sets with the same number of elements, the injectivity of $\Gamma$ follows.

We can now establish correspondences between the Fourier transforms on $G=M$, for $M$ 
viewed as both a compact manifold and a compact Lie group. 
Taking into account (\ref{Fourier}) and \eqref{ort11} we obtain  
\beq\label{FMG}
({\mathcal{F}}_Mf)(i,t)= (f,e_{i}^{t})_{L^{2}}=
\sqrt{d_{\xi_{i}}}(({\mathcal{F}}_Gf)(\xi _i))_{(t-\left\lfloor\frac{t-1}{d_{\xi_{i}}}
\right\rfloor d_{\xi_{i}}, \left\lfloor\frac{t-1}{d_{\xi_{i}}}\right\rfloor+1)},
\eq
for $i\in\ene_0$, $1\leq t \leq d_{i}=d_{\xi_i}^2$. In the another direction we have
\beq\label{FMGa} 
(({\mathcal{F}}_Gf)(\xi_{\ell}))_{i,j}=
\frac{1}{\sqrt{d_{\xi_{\ell}}}}({\mathcal{F}}_Mf)(\ell, \Gamma_{d_{\xi_{\ell}}}(j,i)),
\eq
for $1\leq i,j \leq d_{\xi_{\ell}}$.\\

 For the sake of simplicity, we introduce the following notation:
 
 \[
 \psi(t,d):= \left\lfloor\frac{t-1}{d}\right\rfloor+1, \,\,
 \phi(t,d):=t-\left\lfloor\frac{t-1}{d}\right\rfloor d,
 \] 
where $t\in\{1,\ldots,d^2\}$. With this notation formula (\ref{FMG}) becomes 
\beq\label{FMG2}
({\mathcal{F}}_Mf)(\ell,m)= \sqrt{d_{\xi_{\ell}}}(({\mathcal{F}}_Gf)(\xi _{\ell})_{(\phi(m,d_{\xi_{\ell}}), \psi(m,d_{\xi_{\ell}}))}.
\eq
We also have 
\[e_j^k=(\sqrt{d_{\xi_{j}}}\,\xi_{j})_{(\psi(k,d_{\xi_{j}}), \phi(k,d_{\xi_{j}}))}.\]
In the calculations below we will use the following basic relations for the Fourier transform on a compact Lie group $G$:
$$
({\mathcal{F}}_G(\eta_{rs})(\eta))_{ij}=\int\limits_{G}\eta _{rs}(x)\overline{\eta_{ji}(x)}dx
=\frac{1}{d_{\eta}}\delta_{(i,j),(s,r)},
$$
which means that ${\mathcal{F}}_G(\eta_{rs})(\eta)$ is the matrix of dimension $d_{\eta}\times d_{\eta}$ with the only entry different from zero 
equal to $\frac{1}{d_{\eta}}$ in the position $(s,r)$. We will denote this matrix by $\frac{1}{d_{\eta}}(\delta_{(i,j),(s,r)})_{ij},$
and we have also 
$\delta_{(i,j),(s,r)}=1$ if $i=s$ and $r=j$, and 
$\delta_{(i,j),(s,r)}=0$ if $i\not=s$ or $r\not=j$.

Thus, for an invariant operator we obtain 
\begin{align}
(\efeG (T(\xi_{rs})))(\xi)=\tau(\xi)({\mathcal{F}}_G(\xi_{rs})(\xi))
= \tau(\xi)\frac{1}{d_{\xi}}(\delta_{(i,j),(s,r)})_{ij}\label{ght}.
\end{align}
 
In other words $(\efeG (T(\xi_{rs})))(\xi)$ is a matrix of dimension $d_{\xi}\times d_{\xi}$ with all the columns zero except for
the $r$-column which is equal to the $s$-column of $\frac{1}{d_{\xi}}\tau(\xi)$.\\

We shall denote by $\sigma$ the symbol corresponding to $T$ and consider
the orthonormal basis $\{e_j^k\}$ defined in (\ref{ort11}) in the sense of (\ref{invadef}) on manifolds. 
The symbol introduced in (\ref{EQ:A-symbol}) in the sense of groups will be denoted by $\tau$. 
We now can find formulae relating the symbols $\tau$ and $\sigma$. We begin by finding a formula for $\sigma$ in terms of $\tau$. By   (\ref{FMG2}), (\ref{invadef}) and (\ref{ght}) we obtain
\begin{align*}\sigma(\ell)_{mi}=&(\efeM (Te_{\ell}^i))(\ell,m)\\
=&\sqrt{d_{\xi_{\ell}}}((\efeG (Te_{\ell}^i))(\xi _{\ell}))_{(\phi(m,d_{\xi_{\ell}}), \psi(m,d_{\xi_{\ell}}))}\\
=&\sqrt{d_{\xi_{\ell}}}((\efeG (T(\sqrt{d_{\xi_{\ell}}}\xi_{\ell})_{\psi(i,d_{\xi_{\ell}}),
\phi(i,d_{\xi_{\ell}})}))(\xi _{\ell}))_{(\phi(m,d_{\xi_{\ell}}), \psi(m,d_{\xi_{\ell}}))}\\
=&d_{\xi_{\ell}}((\efeG (T(\xi_{\ell})_{\psi(i,d_{\xi_{\ell}}),\phi(i,d_{\xi_{\ell}})}))
(\xi _{\ell}))_{(\phi(m,d_{\xi_{\ell}}), \psi(m,d_{\xi_{\ell}}))}\\
=&d_{\xi_{\ell}}d_{\xi_{\ell}}^{-1}(\tau(\xi_{\ell})(\delta _{((p,q),(\phi(i,d_{\xi_{\ell}}),\psi(i,d_{\xi_{\ell}}))})_{pq})_{(\phi(m,d_{\xi_{\ell}}), \psi(m,d_{\xi_{\ell}}))}\\
=&\tau(\xi_{\ell})_{(\phi(m,d_{\xi_{\ell}}),\phi(i,d_{\xi_{\ell}}))}
\delta_{\psi(i,d_{\xi_{\ell}}),\psi(m,d_{\xi_{\ell}})}.
\end{align*}

Therefore, we obtain
\beq\label{taus2} {\sigma(\ell)_{mi}= }\left\{
\begin{array}{rl}
\tau(\xi_{\ell})_{(\phi(m,d_{\xi_{\ell}}),\phi(i,d_{\xi_{\ell}}))}\,,&\, 
\mbox{ if }\psi(m,d_{\xi_{\ell}})=\psi(i,d_{\xi_{\ell}}),\\
0\,\,\,\,\,\,\,\,,&\, \mbox{ otherwise}.\\
\end{array} \right. \eq

We note that both functions $\phi$ and $\psi$ are periodic with respect to the first 
parameters $i$ and $m$, implying that there is a periodic structure in the `big' manifold-symbol
$\sigma$ composed of some copies of the `small' group-symbol $\tau$.

We will now give a graphical description of the relations (\ref{taus2}) between the two symbols. 
The entries of $\tau(\xi_{\ell})$ are distributed inside the matrix-symbol $\sigma$ according to (\ref{taus2}): setting $d:=d_{\xi_{\ell}}$ it is
\begingroup
    \fontsize{9pt}{12pt}
\[
  \begin{blockarray}{ccccccccccccccc}
	   &   &   &  &   &  &i & & & &  & & & & &&   \\
     &   &   &  & d_{\xi_{\ell}}  & d_{\xi_{\ell}}+1 & & & & & 
  &  & &  
     d_{\xi_{\ell}}^{2}  \\
		 &   &   &  & \downarrow  & \downarrow & & & & & \downarrow & & &  \downarrow  \\
  \begin{block}{c\Left{}{(\mkern1mu}cccccccccccccc<{\mkern1mu})}
     & \tau(\xi_{\ell})_{11} & \tau(\xi_{\ell})_{12} & \cdots & \tau(\xi_{\ell})_{1d} & 0 &0&\cdots &0& &0&0&\cdots &0 \\
		& \tau(\xi_{\ell})_{21} & \tau(\xi_{\ell})_{22} & \cdots & \tau(\xi_{\ell})_{2d} & 0 &0&\cdots &0&\cdots &0&0&\cdots &0 \\
		& \vdots &\vdots   &  & \vdots   & \vdots   &\vdots  & &\vdots  &\cdots &\vdots  &\vdots  & &\vdots  & \\
	  & \tau(\xi_{\ell})_{d1} & \tau(\xi_{\ell})_{d2} & \cdots & \tau(\xi_{\ell})_{dd} & 0 &0&\cdots &0&\cdots &0&0&\cdots &0\\
		&0  & 0 & \cdots & 0 &\tau(\xi_{\ell})_{11} &\tau(\xi_{\ell})_{12}&\cdots &\tau(\xi_{\ell})_{1d}& &0&0&\cdots &0\\
				& 0 &0& \cdots & 0 &\tau(\xi_{\ell})_{21} &\tau(\xi_{\ell})_{22} &\cdots &\tau(\xi_{\ell})_{2d}&\cdots &0&0&\cdots &0 \\
		& \vdots &\vdots   &  & \vdots   & \vdots   &\vdots  & &\vdots  &\cdots &\vdots  &\vdots  & &\vdots  \\
	  &0 &  0 & \cdots & 0 & \tau(\xi_{\ell})_{d1}&\tau(\xi_{\ell})_{d2} &\cdots &\tau(\xi_{\ell})_{dd}&\cdots &0&0&\cdots &0\\
		& \vdots &\vdots   &  & \vdots   & \vdots   &\vdots  & &\vdots  &\cdots &\vdots  &\vdots  & &\vdots  \\
		& \vdots &\vdots   &  & \vdots   & \vdots   &\vdots  & &\vdots  &\cdots &\vdots  &\vdots  & &\vdots   \\
	&0  & 0 & \cdots & 0 & 0 &0&\cdots &0& &\tau(\xi_{\ell})_{11}&\tau(\xi_{\ell})_{12}&\cdots &\tau(\xi_{\ell})_{1d}\\
		& 0 &0 & \cdots & 0& 0&0 &\cdots &0&\cdots &\tau(\xi_{\ell})_{21}&\tau(\xi_{\ell})_{22}&\cdots &\tau(\xi_{\ell})_{2d} \\
		& \vdots &\vdots   &  & \vdots   & \vdots   &\vdots  & &\vdots  &\cdots &\vdots  &\vdots  & &\vdots   \\
	  &0 & 0 & \cdots &  0& 0&0 &\cdots &0&\cdots &\tau(\xi_{\ell})_{d1} &\tau(\xi_{\ell})_{d2}&\cdots &\tau(\xi_{\ell})_{dd}		
  \end{blockarray}
\]
\endgroup
On the other hand, given the symbol $\sigma$, an application of equations (\ref{taus2}) for $1\leq m,i\leq d_{\xi_{\ell}}$ gives 
\begin{equation}\label{taus2-2}
\tau(\xi_{\ell})_{mi}=\sigma(\ell)_{mi}, \,\mbox{ for }\, 1\leq m,i\leq d_{\xi_{\ell}}.
\end{equation}

The proposition below shows that the Schatten quasi-norms $\|\cdot\|_{S_r}$ 
of the symbols $\tau$ and $\sigma$ are in agreement when $M=G$ is a compact Lie group. 
Thus, our results in Section \ref{SEC:Schatten-mfds} are an extension of those in 
\cite{dr13:schatten} concerning Schatten classes. 
In particular, Theorem \ref{schchr} extents Theorem 3.7 of 
\cite{dr13:schatten} as announced in Remark \ref{rema}.  
We recall that on a compact Lie group $G$ we take $E$ to be a bi-invariant Laplacian.

\begin{prop}\label{scheq1} 
Let $G$ be a compact Lie group. 
If a linear operator $T:C^{\infty}(G)\rightarrow L^{2}(G)$, continuous on $\Dcal'(G)$,
is left-invariant then it is also invariant relative to the family of $H_{j}$'s as in
\eqref{EQ:Hj-G}
in the sense of Theorem \ref{THM:inv-rem} (in fact, it is also strongly invariant).

Let $T:C^{\infty}(G)\rightarrow L^{2}(G)$ be a left-invariant operator, and let $\sigma$ be its
symbol in the sense of Theorem \ref{THM:inv-rem} and $\tau$ its symbol
in the sense of groups as in \eqref{EQ:A-symbol-inv}. Then these symbols are
related by formulae \eqref{taus2}--\eqref{taus2-2}.

Consequently, for a bounded left-invariant operator
$T:L^2(G)\rightarrow L^2(G)$, for every $0<r<\infty$ we have
\[\|\sigma(\ell)\|_{S_{r}}^r=d_{\xi_{\ell}}\|\tau(\xi_{\ell})\|_{S_{r}}^r,\]
and, therefore,
\[\sum\limits_{\ell}\|\sigma(\ell)\|_{S_{r}}^r=\sum\limits_{\ell}d_{\xi_{\ell}}\|\tau(\xi_{\ell})\|_{S_{r}}^r.\]
\end{prop}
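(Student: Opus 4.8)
The plan is to treat the three claims in order, using the index computation that precedes the statement to reduce the symbol relation to a direct-sum statement about matrices.

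First I would establish that a left-invariant $T$, continuous on $\Dcal'(G)$, satisfies condition (i) of Theorem \ref{THM:inv} relative to the partition \eqref{EQ:Hj-G}. For this I would invoke the global quantization \eqref{EQ:A-quant}: for a left-invariant operator the symbol $\tau$ does not depend on $x$ and one has $\widehat{Tf}(\xi)=\tau(\xi)\widehat{f}(\xi)$, which is the content of \eqref{ght} read for general $f$. Since the group Fourier transform of a matrix coefficient $(\xi_{j})_{rs}$ is supported on the single class $[\xi_{j}]$, so is that of $T(\xi_{j})_{rs}$; hence $T(\xi_{j})_{rs}\in H_{[\xi_{j}]}=H_{j}$, giving $T(H_{j})\subset H_{j}$. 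As $T$ is continuous on $\Dcal'(G)$, Theorem \ref{THM:inv} upgrades (i) to the equivalence with (v) and (vi), so $T$ is strongly invariant, as claimed.

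The second claim requires no new work: the identities \eqref{taus2} and \eqref{taus2-2} relating $\sigma$ and $\tau$ were already derived above by combining \eqref{FMG2}, \eqref{invadef} and \eqref{ght}, and I would simply cite that computation.

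The heart of the proof is the Schatten identity. I would read off from \eqref{taus2} that, after grouping the indices $m\in\{1,\dots,d_{\xi_{\ell}}^{2}\}$ according to the block label $\psi(m,d_{\xi_{\ell}})$, the matrix $\sigma(\ell)$ is block-diagonal with exactly $d_{\xi_{\ell}}$ diagonal blocks, each equal to $\tau(\xi_{\ell})$ --- this is precisely the pattern exhibited in the large array above. A direct sum of $d_{\xi_{\ell}}$ copies of $\tau(\xi_{\ell})$ has, as its multiset of singular values, $d_{\xi_{\ell}}$ copies of the singular values of $\tau(\xi_{\ell})$; raising to the power $r$ and summing (which is legitimate for every $0<r<\infty$, whether $\|\cdot\|_{S_{r}}$ is a norm or merely a quasi-norm) gives
\[
\|\sigma(\ell)\|_{S_{r}}^{r}=d_{\xi_{\ell}}\|\tau(\xi_{\ell})\|_{S_{r}}^{r},
\]
and summing this over $\ell$ then yields the final identity. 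The step needing the most care is checking that the nonzero pattern $\psi(m,d_{\xi_{\ell}})=\psi(i,d_{\xi_{\ell}})$ in \eqref{taus2}, together with the fact that $\phi(\cdot,d_{\xi_{\ell}})$ restricts to a bijection onto $\{1,\dots,d_{\xi_{\ell}}\}$ on each block, genuinely produces an (unpermuted) direct sum of copies of $\tau(\xi_{\ell})$ rather than some scrambled matrix; once this combinatorial point is settled the singular-value count, and hence the identity, is immediate.
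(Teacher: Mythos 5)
Your proof is correct and follows essentially the same route as the paper: invariance of left-invariant operators deduced from the fact that the group Fourier transform localises matrix coefficients to a single class (the paper cites \eqref{ght} and Theorem \ref{THM:inv-rem}), the symbol relation by citing the computation preceding the statement, and the Schatten identity read off from the block-diagonal structure of $\sigma(\ell)$ in \eqref{taus2}. The only cosmetic difference is in the last step: you compute the singular values of the direct sum $\sigma(\ell)=\bigoplus_{b=1}^{d_{\xi_{\ell}}}\tau(\xi_{\ell})$ directly, whereas the paper first replaces $\sigma,\tau$ by $|\sigma|,|\tau|$ and diagonalises before counting diagonal entries; both arguments rest on exactly the same structural fact, and your combinatorial check that $\phi,\psi$ produce an unpermuted direct sum is the content of the paper's displayed matrix picture.
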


\begin{proof} 
The invariance in the sense of groups as in \eqref{EQ:A-symbol-inv}
of the group-left-invariant operators follows from the relation \eqref{taus2}
between symbols and from the characterisation in Theorem \ref{THM:inv-rem}.

For the following statements, since for Schatten quasi-norms we have
\[\|B\|_{S_{r}}=\||B|\|_{S_{r}},\]
we can assume that $\sigma,\tau$ are symmetric, and hence they can be also assumed 
diagonal. On the other hand, using the relation between $\sigma$ and $\tau$ 
in \eqref{taus2} and \eqref{taus2-2},
and by looking at the  
diagonal elements of $\sigma$ in (\ref{taus2}), we obtain 
\begin{align*} \|\sigma(\ell)\|_{S_{r}}^r=\sum\limits_{m=1}^{d_{\xi_{\ell}}^2}|\sigma(\ell)_{mm}|^r
=d_{\xi_{\ell}}\sum\limits_{m=1}^{d_{\xi_{\ell}}} |\tau(\xi_{\ell})_{mm}|^r
=d_{\xi_{\ell}}\|\tau(\xi_{\ell})\|_{S_{r}}^r.
\end{align*}
Thus $\|\sigma(\ell)\|_{S_{r}}^r=d_{\xi_{\ell}}\|\tau(\xi_{\ell})\|_{S_{r}}^r$ and,
therefore,
$\sum\limits_{\ell}\|\sigma(\ell)\|_{S_{r}}^r=\sum\limits_{\ell}d_{\xi_{\ell}}\|\tau(\xi_{\ell})\|_{S_{r}}^r.$
\end{proof}

We finish this section by describing an adaptation of the above construction to put
it in the framework of manifolds as described in Theorem \ref{THM:inv}.
In the case of the torus $\Tn$ this is indicated in Remark \ref{REM:torus}.
Recalling the definition of $H_{[\xi]}$ in \eqref{EQ:Hj-G} for each
$[\xi]\in\Gh$, and the notation $\lambda_{[\xi]}$ for the eigenvalues as
in \eqref{EQ:Lap-lambda}, 
for the sequence $0=\lambda_{0}^{2}<\lambda_{1}^{2}<\lambda_{2}^{2}<\ldots$ of
eigenvalues of $-\mathcal{L}_G$ counted without multiplicities we set
\begin{equation}\label{EQ:Hj-big}
 \widetilde{H_{\ell}}:=\bigoplus_{\underset{\lambda_{[\xi]}=\lambda_{\ell}}{[\xi]\in\Gh}}
 H_{[\xi]}=
 \bigoplus_{\underset{\lambda_{[\xi]}=\lambda_{\ell}}{[\xi]\in\Gh}}
 {\rm span}\{ \xi_{ik}: \; 1\leq i,k\leq d_{\xi} \},\quad \ell\in\N_{0}.
\end{equation}

The family of $\widetilde{H_{\ell}}$'s is the collection of eigenspaces of the
elliptic differential operator ${\mathcal L}_{G}$ for which the condition
\eqref{EQ:lambdas} is satisfied. The symbols $\sigma$ and 
$\widetilde{\sigma}$ of an invariant operator $T$ with 
respect to the partitions $H_{j}$'s and $\widetilde{H_{\ell}}$'s, respectively, are
related by
\begin{equation}\label{EQ:symbols}
\widetilde{\sigma}(\ell)=\bigotimes_{\underset{\lambda_{[\xi_{j}]}=\lambda_{\ell}}{[\xi_{j}]\in\Gh}}
\sigma(j),
\end{equation}
with $\widetilde{\sigma}(\ell)\in\ce^{\widetilde{d_{\ell}}\times \widetilde{d_{\ell}}}$
and $$\widetilde{d_{\ell}}= 
\sum_{\underset{\lambda_{[\xi_{j}]}=\lambda_{\ell}}{[\xi_{j}]\in\Gh}} d_{j}=
\sum_{\underset{\lambda_{[\xi_{j}]}=\lambda_{\ell}}{[\xi_{j}]\in\Gh}} d_{\xi_{j}}^{2}.$$
Recalling the relation
\eqref{taus2} 
between the symbol $\sigma$ in the sense of Theorem \ref{THM:inv-rem} and 
the group symbol $\tau$ as in \eqref{EQ:A-symbol-inv}, given by
\begin{equation}\label{EQ:symb1}
\sigma(j)\equiv \sigma(\xi_{j})=
\begin{pmatrix}
\tau(\xi_{j}) & 0 & \cdots & 0 \\
0 & \tau(\xi_{j}) & \cdots & 0 \\
\vdots & \vdots & \cdots & \vdots \\
0 & 0 & \cdots & \tau(\xi_{j})
\end{pmatrix},
\end{equation}
the formula \eqref{EQ:symbols}
provides the further relation between the symbol $\widetilde{\sigma}$ in the sense
of manifolds (in Theorem \ref{THM:inv}) and the group symbol $\tau$. Therefore, if
$\lambda_{[\xi_{1}]}=\ldots = \lambda_{[\xi_{m}]}=\lambda_{\ell}$ for non-equivalent 
representations $[\xi_{1}],\ldots,[\xi_{m}]\in\Gh$,
we have
\begin{equation}\label{EQ:symb2}
\widetilde{\sigma}(\ell)=
\begin{pmatrix}
\sigma(\xi_{1}) & 0 & \cdots & 0 \\
0 & \sigma(\xi_{2}) & \cdots & 0 \\
\vdots & \vdots & \cdots & \vdots \\
0 & 0 & \cdots & \sigma(\xi_{m})
\end{pmatrix}.
\end{equation}
In particular, we obtain

\begin{cor}\label{COR:symbs} 
Let $G$ be a compact Lie group and let
$T:C^{\infty}(G)\rightarrow L^{2}(G)$ be a linear operator, continuous on $\Dcal'(G)$.
If $T$ is left-invariant then it is also invariant relative to the operator 
${\mathcal L}_{G}$ 
(in the sense of Theorem \ref{THM:inv}).
The corresponding symbols are related by formulae
\eqref{EQ:symbols}--\eqref{EQ:symb2}.
\end{cor}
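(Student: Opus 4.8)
The plan is to derive the statement entirely from Proposition \ref{scheq1} and the general criterion of Theorem \ref{THM:inv}, since the only genuinely new step is passing from the fine partition $\{H_{[\xi_j]}\}$ of \eqref{EQ:Hj-G} to the coarse partition $\{\widetilde{H_{\ell}}\}$ of \eqref{EQ:Hj-big}. First I would recall from Proposition \ref{scheq1} that a left-invariant operator $T:C^{\infty}(G)\to L^{2}(G)$, continuous on $\Dcal'(G)$, is invariant with respect to $\{H_{[\xi_j]}\}$ in the sense of Theorem \ref{THM:inv-rem}; in particular $T(H_{[\xi_j]})\subset H_{[\xi_j]}$ for every $j$, and its symbol $\sigma(\xi_j)$ is related to the group symbol $\tau(\xi_j)$ by \eqref{taus2}--\eqref{taus2-2}, i.e.\ by the block-diagonal display \eqref{EQ:symb1}.

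Next I would observe that by definition \eqref{EQ:Hj-big} each $\widetilde{H_{\ell}}$ is the direct sum of those $H_{[\xi_j]}$ with $\lambda_{[\xi_j]}=\lambda_{\ell}$; since $T$ maps each summand into itself, linearity gives $T(\widetilde{H_{\ell}})\subset\widetilde{H_{\ell}}$ for every $\ell$. The spaces $\{\widetilde{H_{\ell}}\}$ are exactly the eigenspaces of $\mathcal{L}_G$, and by construction their eigenvalues are counted without multiplicity, so the strict ordering \eqref{EQ:lambdas} holds. Thus $T$ satisfies condition (i) of Theorem \ref{THM:inv} relative to $\{\widetilde{H_{\ell}}\}$, which is precisely invariance relative to $\mathcal{L}_G$; and since $T$ is continuous on $\Dcal'(G)$, the full list of equivalent conditions (i)--(vi) applies, so $T$ is in fact strongly invariant.

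For the symbol relation I would equip each $\widetilde{H_{\ell}}$ with the orthonormal basis obtained by concatenating, block by block, the bases $\{e_j^k\}$ of the constituent spaces $H_{[\xi_j]}$. Because $T$ leaves each $H_{[\xi_j]}$ invariant, the restriction $T|_{\widetilde{H_{\ell}}}$ splits as the block-diagonal operator $\bigoplus_j T|_{H_{[\xi_j]}}$ in this ordering, and passing to symbols (transposition preserves the block-diagonal shape and transposes each block) yields $\widetilde{\sigma}(\ell)=\bigoplus_j \sigma(\xi_j)$, which is the content of \eqref{EQ:symbols} and its explicit form \eqref{EQ:symb2}. Substituting \eqref{EQ:symb1} into each diagonal block then produces the relation between $\widetilde{\sigma}$ and the group symbol $\tau$.

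The computations are routine, so I do not expect a real obstacle; the only points requiring care are bookkeeping ones. One must order the basis of $\widetilde{H_{\ell}}$ so that the matrix is genuinely block diagonal, keep track of the transpose convention relating $\sigma(j)$ to $T|_{H_{[\xi_j]}}$, and read the symbol $\bigotimes$ in \eqref{EQ:symbols} as the block direct sum dictated by the additive dimension count $\widetilde{d_{\ell}}=\sum_j d_{\xi_j}^{2}$ rather than as a tensor product.
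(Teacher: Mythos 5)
Your proposal is correct and follows essentially the same route as the paper, which states this corollary as an immediate consequence (``In particular, we obtain'') of Proposition \ref{scheq1} together with the passage from the fine partition $\{H_{[\xi_j]}\}$ of \eqref{EQ:Hj-G} to the coarse eigenspace partition $\{\widetilde{H_{\ell}}\}$ of \eqref{EQ:Hj-big} and the block-diagonal symbol relations \eqref{EQ:symbols}--\eqref{EQ:symb2}. You merely make explicit the routine details the paper leaves implicit, including the correct reading of $\bigotimes$ in \eqref{EQ:symbols} as a block direct sum, as forced by the dimension count $\widetilde{d_{\ell}}=\sum_j d_{\xi_j}^{2}$.
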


\section{Kernels of invariant operators on compact manifolds}
\label{SEC:kernels}

In this section we describe invariant operators relative to $E$ in terms of their kernels. 
We first observe that if $T=T_{\sigma}$ is invariant 
with symbol $\sigma$, expanding Proposition \ref{PROP:quant}
we can write
\begin{align*}
T_{\sigma}f(x)=&\sum\limits_{\ell=0}^{\infty}\sum\limits_{m=1}^{d_{\ell}}(\sigma(\ell)\widehat{f}(\ell))_me_{\ell}^m(x)\\
=&\sum\limits_{\ell=0}^{\infty}\sum\limits_{m=1}^{d_{\ell}}\sum\limits_{k=1}^{d_{\ell}}\sigma(\ell)_{mk}\widehat{f}(\ell)_ke_{\ell}^m(x)\\
=&\sum\limits_{\ell=0}^{\infty}\sum\limits_{m=1}^{d_{\ell}}\sum\limits_{k=1}^{d_{\ell}}\sigma(\ell)_{mk}e_{\ell}^m(x)\int\limits_{M} f(y)\overline{e_{\ell}^k(y)}dy\\
=&\int\limits_{M}\left(\sum\limits_{\ell=0}^{\infty}\sum\limits_{m=1}^{d_{\ell}}\sum\limits_{k=1}^{d_{\ell}}\sigma(\ell)_{mk}e_{\ell}^m(x)\overline{e_{\ell}^k(y)}\right)f(y)dy.\\
\end{align*}
Hence, the integral kernel $K(x,y)$ of $T_{\sigma}$ is given by
\beq 
K(x,y)=\sum\limits_{\ell=0}^{\infty}\sum\limits_{m=1}^{d_{\ell}}\sum\limits_{k=1}^{d_{\ell}}\sigma(\ell)_{mk}e_{\ell}^m(x)\overline{e_{\ell}^k(y)}.
\label{kerK}\eq
On the other hand we note that
\[\{e^m_{\ell}\otimes \overline{e^{m'}_{\ell'}}\}_{\ell,\ell'\geq 0}^{1\leq m\leq d_{\ell},1\leq m'\leq d_{\ell'}}\]
 defines an orthonormal basis of $L^2(M\times M)$. If $T$ is Hilbert-Schmidt on $L^{2}(M)$, 
 not necessarily invariant, then its kernel $K$ is square integrable and we can write
 its decomposition in this basis as
\begin{equation}\label{EQ:K-f1}
K(x,y)=\sum\limits_{\ell=0}^{\infty}\sum\limits_{\ell'=0}^{\infty}\sum\limits_{m=1}^{d_{\ell}}\sum\limits_{m'=1}^{d_{\ell'}}((\efemo)K)(\ell,m,\ell',m')e_{\ell}^m(x)\overline{e_{\ell'}^{m'}(y)},
\end{equation}
where $((\efemo)K)(\ell,m,\ell',m')$ denotes the Fourier coefficients of K with respect to the basis 
$\{e^m_{\ell}\otimes \overline{e^{m'}_{\ell'}}\}$ given by
\begin{align*}
((\efemo)K)(\ell,m,\ell',m')=&
(K,e_{\ell}^m(x)\overline{e_{\ell'}^{m'}(y)})_{L^2(M\times M)}
\nonumber\\
=&
\int\limits_{M\times M}K(x,y)\overline{e_{\ell}^m(x)}e_{\ell'}^{m'}(y)dxdy.
\end{align*}
We observe from \eqref{kerK} and \eqref{EQ:K-f1}
that $T$ is invariant relative to $(E,\{e^m_{\ell}\}_{\ell\geq 0}^{1\leq m\leq d_{\ell}})$ if and only if 
\begin{equation}
{\displaystyle ((\efemo)K)(\ell,m,\ell',m') }= \left\{
\begin{array}{rl}
 0,\, &\ell\neq\ell',\\
\sigma(\ell)_{mm'},&\, \ell=\ell'.
\end{array} \right.
\label{ksymb}\end{equation}
For example, from (\ref{kerK}) we obtain
\begin{align*} (K,e_{\ell}^m(x)\overline{e_{\ell'}^{m'}(y)})_{L^2(M\times M)}=&\int\limits_{M\times M}\left(\sum\limits_{j=0}^{\infty}\sum\limits_{k=1}^{d_{j}}\sum\limits_{i=1}^{d_{j}}\sigma(j)_{ki}e_{j}^k(x)\overline{e_{j}^i(y)}\right)\overline{e_{\ell}^m(x)}e_{\ell'}^{m'}(y)dxdy\\
=&\sum\limits_{j=0}^{\infty}\sum\limits_{k=1}^{d_{j}}\sum\limits_{i=1}^{d_{j}}\sigma(j)_{ki}\int\limits_{M}e_{j}^k(x)\overline{e_{\ell}^m(x)}dx\int\limits_{M}e_{\ell'}^{m'}(y)\overline{e_{j}^i(y)}dy\\
=&\left\{
\begin{array}{rl}
 0,\, &\ell\neq\ell',\\
\sigma(\ell)_{mm'},&\, \ell=\ell'.
\end{array} \right.
\end{align*}

We now introduce some notation which will be useful in order to define a 
suitable setting to study the above Fourier coefficients and the relation 
between operator's kernel and symbol. Let  
\[
 \Sigma(M\times M) := \left\{\tilde{\sigma}=(\tilde{\sigma}(\ell,m,\ell',m'))_{0\leq \ell,\ell'<\infty}^{1\leq m\leq d_{\ell}, 1\leq m'\leq d_{\ell'} } :\tilde{\sigma}(\ell,m,\ell',m')=0 \mbox{ if }\ell\neq \ell'\right\},
\]
\[\mathcal{K}:=\{K\in\mathcal{D}'(M\times M): 
K {\mbox{ defines an invariant operator relative to }}\, E\}.\]
We now consider the mapping 
\[K\mapsto (\efemo)K\]
from $\mathcal{K}$ into $\Sigma(M\times M)$. 
We can identify the family of symbols $\Sigma(M\times M)$ with the matrices $\bigcup\limits_{\ell}\mathbb{C}^{d_{\ell}\times d_{\ell}}$ by letting
\[\tilde{\sigma}\equiv\sigma\]
such that $\sigma(\ell)_{mm'}=\tilde{\sigma}(\ell,m,\ell,m')$. 
In this way we also get the identification
$$
\Sigma(M\times M)\simeq \Sigma_{M} =\Sigma
$$
with $\Sigma$ from \eqref{EQ:Sigma}.

\medskip
If $1\leq p<\infty$ we define
\[\ell^{p}(\Sigma)=\{\sigma\in\Sigma:\sum\limits_{\ell=0}^{\infty}\|\sigma(\ell)\|_{S_{p}}^p<\infty\}.\]
On $\ell^{p}(\Sigma)$ we define the norm
\[\|\sigma\|_{\ell^{p}(\Sigma)}:=\left(\sum\limits_{\ell=0}^{\infty}\|\sigma(\ell)\|_{S_{p}}^p\right)^{\frac 1p},\,\, 1\leq p<\infty .\]
If $p=\infty$ we define
\[\ell^{\infty}(\Sigma)=\{\sigma\in\Sigma:\sup\limits_{\ell\in\N_{0}}\|\sigma(\ell)\|_{op}<\infty\},\]
and we endow $\ell^{\infty}(\Sigma)$ with the norm 
\[\|\sigma\|_{\ell^{\infty}(\Sigma)}:=\sup\limits_{\ell\in\N_{0}}\|\sigma(\ell)\|_{op}.\]

The integral operator with kernel $K$ will be sometimes denoted by $T_K$. 
We note that in terms of the norms $\ell^{p}(\Sigma)$, for invariant operators
Theorem \ref{L2} 
can be formulated as
\begin{equation}\label{EQ:thm-L2}
T\in {\mathscr L}(L^{2}(M)) \Longleftrightarrow \sigma_{T}\in \ell^{\infty}(\Sigma),
\end{equation}
and Theorem \ref{schchr} can be formulated as
\begin{equation}\label{EQ:thm-Sp}
T\in S_{p}(L^{2}(M)) \Longleftrightarrow \sigma_{T}\in \ell^{p}(\Sigma)
\end{equation}
for $0<p<\infty$.

For the formulation of the following theorem we will use  the mixed-norm $L^p$ spaces $L_x^{p_1}L_y^{p_2}$ on the manifold $M$ for $1\leq p_1,p_2\leq \infty$. A measurable function $K(x,y)$ is said to belong to $L_x^{p_1}L_y^{p_2}(M\times M)$ if $$\|\|K(x,y)\|_{L_y^{p_2}}\|_{L_x^{p_1}}<\infty.$$ 
On $L_x^{p_1}L_y^{p_2}(M\times M)$ we consider the norm $\|\cdot\|_{L_x^{p_1}L_y^{p_2}}:=\|\|\cdot\|_{L_y^{p_2}}\|_{L_x^{p_1}}.$ We also define 
\[
L^{(p_1,p_2)}(M\times M):=L_x^{p_1}L_y^{p_2}(M\times M)\cap L_y^{p_1}L_x^{p_2}(M\times M),
\]
endowed with norm 
\[
\|\cdot\|_{L^{(p_1,p_2)}}:=\max\{\|\cdot\|_{L_x^{p_1}L_y^{p_2}},\, \|\cdot\|_{L_y^{p_1}L_x^{p_2}} \}.
\] We note that in general $L^{(p_1,p_2)}\neq L^{(p_2,p_1)}.$ 
The basic properties of mixed-norm $L^p$ spaces for many variables were first studied by Benedek and Panzone in \cite{bp:mxlp}. In particular they proved a version of Stein's Interpolation of operators theorem and as a consequence the Riesz-Thorin theorem in that setting. A slight modification allows us to apply the Riesz-Thorin theorem when the operator $T$ acts from a mixed-norm $L^p$ space to an $\ell^{p}(\Sigma)$-space. 

\begin{thm}\label{ffb2} 
If $1\leq p\leq 2$ and $K\in \mathcal{K}\cap L^{(p',p)}$, then $(\efemo)K\in \ell^{p'}(\Sigma)$, where $\frac 1p +\frac {1}{p'}=1$. 
\end{thm}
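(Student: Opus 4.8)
The plan is to establish a Hausdorff--Young type inequality by interpolating between the two endpoints $p=2$ and $p=1$, and then to invoke the mixed-norm Riesz--Thorin theorem of Benedek--Panzone (in the Schatten-valued form alluded to just before the statement). Throughout I regard the symbol map $\mathcal{A}\colon K\mapsto (\efemo)K$ as a \emph{linear} map, where for general $K$ I keep only the diagonal-in-$\ell$ blocks, setting $\mathcal{A}K(\ell)_{mm'}:=((\efemo)K)(\ell,m,\ell,m')$. Recalling from \eqref{ksymb} that for $K\in\mathcal{K}$ this is precisely the symbol $\sigma$, it suffices to prove the norm bound $\|\mathcal{A}K\|_{\ell^{p'}(\Sigma)}\leq\|K\|_{L^{(p',p)}}$ and then restrict to $K\in\mathcal{K}$.

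First I would treat the endpoint $p=2$, where $p'=2$ and $L^{(2,2)}=L^{2}(M\times M)$. Since $\{e_{\ell}^{m}\otimes\overline{e_{\ell'}^{m'}}\}$ is an orthonormal basis of $L^{2}(M\times M)$, the Plancherel formula gives $\|K\|_{L^{2}(M\times M)}^{2}=\sum_{\ell,\ell',m,m'}|((\efemo)K)(\ell,m,\ell',m')|^{2}$. Discarding the off-diagonal terms $\ell\neq\ell'$ yields $\|\mathcal{A}K\|_{\ell^{2}(\Sigma)}^{2}=\sum_{\ell}\sum_{m,m'}|\mathcal{A}K(\ell)_{mm'}|^{2}=\sum_{\ell}\|\mathcal{A}K(\ell)\|_{S_{2}}^{2}\leq\|K\|_{L^{2}(M\times M)}^{2}$, with equality when $K\in\mathcal{K}$. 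Thus $\mathcal{A}$ is bounded (indeed isometric on $\mathcal{K}$) from $L^{2}(M\times M)$ to $\ell^{2}(\Sigma)$.

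Next I would treat the endpoint $p=1$, where $p'=\infty$ and one must bound $\sup_{\ell}\|\mathcal{A}K(\ell)\|_{op}$. The key observation is that, for $a,b\in\ce^{d_{\ell}}$, putting $\psi=\sum_{m'}a_{m'}e_{\ell}^{m'}$ and $\phi=\sum_{m}b_{m}e_{\ell}^{m}$ in $H_{\ell}$, one has $b^{*}\mathcal{A}K(\ell)a=(T_{K}\psi,\phi)_{L^{2}(M)}$, so that $\mathcal{A}K(\ell)$ is the compression $P_{\ell}T_{K}|_{H_{\ell}}$ and hence $\|\mathcal{A}K(\ell)\|_{op}\leq\|T_{K}\|_{\mathscr{L}(L^{2}(M))}$ for every $\ell$. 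Schur's test with weight $h\equiv1$ then gives $\|T_{K}\|_{\mathscr{L}(L^{2}(M))}\leq(\|K\|_{L_{x}^{\infty}L_{y}^{1}}\,\|K\|_{L_{y}^{\infty}L_{x}^{1}})^{1/2}$, and since both factors are by the very definition of $L^{(\infty,1)}=L_{x}^{\infty}L_{y}^{1}\cap L_{y}^{\infty}L_{x}^{1}$ bounded by $\|K\|_{L^{(\infty,1)}}$, we conclude $\sup_{\ell}\|\mathcal{A}K(\ell)\|_{op}\leq\|K\|_{L^{(\infty,1)}}$, i.e. $\mathcal{A}\colon L^{(\infty,1)}\to\ell^{\infty}(\Sigma)$ is bounded.

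Finally I would interpolate. Writing $\theta=2/p'$, so that $\tfrac{1}{p'}=\tfrac{\theta}{2}$ and $\tfrac{1}{p}=(1-\theta)+\tfrac{\theta}{2}$, the source scale interpolates as $L^{(p',p)}=[L^{(\infty,1)},L^{(2,2)}]_{\theta}$ by the mixed-norm (Stein/Riesz--Thorin) theorem of Benedek--Panzone applied componentwise to $L_{x}^{\cdot}L_{y}^{\cdot}$ and $L_{y}^{\cdot}L_{x}^{\cdot}$, while the target interpolates as $\ell^{p'}(\Sigma)=[\ell^{\infty}(\Sigma),\ell^{2}(\Sigma)]_{\theta}$. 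The latter is where the main obstacle lies: $\ell^{p'}(\Sigma)$ is a sequence space whose $\ell$-th component carries the \emph{Schatten} $S_{p'}$-norm, so one must interpolate a vector-valued Schatten scale with matching inner and outer exponents. This is legitimate because, by the symbol correspondence of Theorem \ref{schchr-abstract}, $\ell^{q}(\Sigma)$ is isometric to the complemented subspace of block-diagonal operators in $S_{q}(\Hcal)$, and the Schatten classes form a complex-interpolation scale; equivalently, one runs Stein's analytic interpolation directly on the matrix entries of $\mathcal{A}K$, which is the ``slight modification'' of the Benedek--Panzone argument referred to above. Combining the two endpoint estimates through this interpolation yields $\|(\efemo)K\|_{\ell^{p'}(\Sigma)}\leq\|K\|_{L^{(p',p)}}$ for all $1\leq p\leq2$, and in particular $(\efemo)K\in\ell^{p'}(\Sigma)$, as claimed.
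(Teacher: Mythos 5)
Your proposal is correct and takes essentially the same route as the paper: both proofs establish the endpoint cases $p=2$ (Plancherel / Hilbert--Schmidt) and $p=1$ (Schur's test giving $L^2$-boundedness, hence a uniform bound on the blocks $\sigma(\ell)$), and then invoke the Benedek--Panzone mixed-norm Riesz--Thorin theorem with exactly the same exponent bookkeeping. Your two refinements --- extending the symbol map linearly to \emph{all} kernels by extracting the diagonal-in-$\ell$ blocks, and bounding $\|\mathcal{A}K(\ell)\|_{op}$ via the compression identity $b^{*}\mathcal{A}K(\ell)a=(T_{K}\psi,\phi)_{L^{2}(M)}$ instead of appealing to Theorems \ref{L2} and \ref{schchr} (which presuppose invariance) --- stay within the same strategy, and if anything make the interpolation step cleaner, since the operator is then defined on genuine Banach spaces rather than on the sets $\mathcal{K}\cap L^{(\cdot,\cdot)}$ on which the paper interpolates.
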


\begin{proof}  If $p=2$ we have $p'=2$. From $$K\in \mathcal{K}\cap L_x^{2}L_y^{2}\cap L_y^{2}L_x^{2}=\mathcal{K}\cap L_{x,y}^{2}\subset L_{x,y}^{2}$$ we get a Hilbert-Schmidt operator $T_K$.  On the other hand, by Theorem \ref{schchr} with $r=2$, if $\sigma$ is the symbol of $T_K$ then $\sum\limits_{\ell}\|\sigma(\ell)\|_{S_2}^2<\infty$. Hence and by (\ref{ksymb}) we obtain $(\efemo)K\in  \ell^{2}(\Sigma)$.  

\smallskip
For $p=1$ we have $p'=\infty$. If $$K\in \mathcal{K}\cap L_x^{\infty}L_y^{1}\cap L_y^{\infty}L_x^{1},$$ 
by Schur's Lemma we get $T_K\in \mathscr{L}(L^r(M))$ for all $1\leq r\leq \infty.$ In particular $T_K\in \mathscr{L}(L^2(M))$
 and by Theorem \ref{L2} the symbol $\sigma$ of $T_K$ satisfies
\[\sup\limits_{\ell}\|\sigma(\ell)\|_{op}<\infty.\]
By (\ref{ksymb})  we have 
\[ \|(\efemo)K\|_{\ell^{\infty}(\Sigma)}=\sup\limits_{\ell}\|\sigma(\ell)\|_{op}.\]
Hence $(\efemo)K\in\ell^{\infty}(\Sigma)$. We have shown that
\[(\efemo):\mathcal{K}\cap L^{(2,2)}\longrightarrow \ell^{2}(\Sigma)\]
and
\[(\efemo):\mathcal{K}\cap L^{(\infty,1)}\longrightarrow \ell^{\infty}(\Sigma).\]
By the Riesz-Thorin interpolation theorem between $L^{(r,s)}$ and $\ell^{p}(\Sigma)$ spaces
(cf. \cite[Theorem 2]{bp:mxlp}) we obtain
\[(\efemo):\mathcal{K}\cap L^{(p_1,p_2)}\longrightarrow \ell^{q}(\Sigma),\]
with $\frac 1p_1=\frac{1-\theta}{2}+\frac{\theta}{\infty},\, \frac 1p_2=\frac{1-\theta}{2}+\frac{\theta}{1}, \frac{1}{q}=\frac{1-\theta}{2}+\frac{\theta}{\infty}$ for $0\leq\theta\leq 1.$ 
Hence
\[p_1=\frac{2}{1-\theta},\, p_2=\frac{2}{1+\theta},\, q=\frac{2}{1-\theta}.\]
We observe that if $p=\frac{2}{1+\theta}$ then $\theta=\frac{2-p}{p}$ and 
$\frac{2}{1-\theta}=\frac{p}{p-1}=p'$.
Thus
\[(\efemo):\mathcal{K}\cap L^{(p',p)}\longrightarrow \ell^{p'}(\Sigma),\]
completing the proof.
\end{proof}
The following corollary is an immediate consequence of Theorems \ref{ffb2} and \ref{schchr}, 
it furnishes a sufficient kernel condition for Schatten classes with index $p'\geq 2$. 
\begin{cor}\label{ffb1} If $1\leq p\leq 2$ and $K\in \mathcal{K}\cap L^{(p',p)}(M\times M)$, then $T_K\in S_{p'}(L^2(M))$. 
\end{cor}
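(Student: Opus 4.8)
The plan is to chain Theorem \ref{ffb2} with Theorem \ref{schchr}, observing that both theorems are phrased precisely so as to compose. First I would note that since $K\in\mathcal{K}$, by definition $K$ is the kernel of an invariant operator $T_K$ relative to $E$; hence $T_K$ possesses a matrix symbol $\sigma=\sigma_{T_K}\in\Sigma$, and under the identification $\Sigma(M\times M)\simeq\Sigma$ recorded just after \eqref{ksymb}, this symbol coincides with the Fourier coefficients of the kernel, namely $\sigma(\ell)_{mm'}=((\efemo)K)(\ell,m,\ell,m')$. Thus statements about $(\efemo)K$ in the scale $\ell^{r}(\Sigma)$ translate directly into statements about $\sigma_{T_K}$.

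Next, for $1\leq p\leq 2$ we have $2\leq p'\leq\infty$. Assuming first that $1<p\leq 2$, so that $2\leq p'<\infty$, I would invoke Theorem \ref{ffb2} with the hypothesis $K\in\mathcal{K}\cap L^{(p',p)}$ to obtain $(\efemo)K\in\ell^{p'}(\Sigma)$, which by the very definition of the $\ell^{p'}(\Sigma)$-norm means $\sum_{\ell=0}^{\infty}\|\sigma(\ell)\|_{S_{p'}}^{p'}<\infty$. Applying Theorem \ref{schchr} with $r=p'$ then yields $T_K\in S_{p'}(L^2(M))$, which is exactly the claim.

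The only point requiring separate attention is the endpoint $p=1$, where $p'=\infty$ and Theorem \ref{schchr} (stated for $0<r<\infty$) does not directly apply; here $S_{\infty}(L^2(M))$ is understood as the class of bounded operators, and membership follows instead from Theorem \ref{L2}. Indeed, Theorem \ref{ffb2} gives $(\efemo)K\in\ell^{\infty}(\Sigma)$, i.e. $\sup_{\ell}\|\sigma(\ell)\|_{op}<\infty$, so $T_K$ extends to a bounded operator on $L^2(M)$ by Theorem \ref{L2}. I do not anticipate any genuine obstacle: the corollary is a formal consequence, since the substantive work—the mixed-norm interpolation—has already been carried out in the proof of Theorem \ref{ffb2}.
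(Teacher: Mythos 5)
Your proposal is correct and follows exactly the paper's route: the paper derives Corollary \ref{ffb1} as an immediate consequence of Theorem \ref{ffb2} (via the identification \eqref{ksymb} of $(\efemo)K$ with the symbol $\sigma_{T_K}$) combined with Theorem \ref{schchr}. Your separate treatment of the endpoint $p=1$ via Theorem \ref{L2} is a careful touch the paper leaves implicit, but it is the same argument, consistent with the paper's definition of $S_\infty$ as the bounded operators.
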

We recall that sufficient conditions of the type above in terms of kernels are not allowed for $0<p'<2$ as a consequence of a Carleman's example. Corollary \ref{ffb1} is known for general integral operators (cf. \cite[Theorem 3]{russo:lpg}). Here we have deduced a particular version for invariant operators with a simple proof by applying the notion of symbol.

We now describe another representation of the kernel as the 
`generalised' Fourier transform of the symbol. From formula (\ref{kerK}) we have 
\begin{align*} 
K(x,y)=&\sum\limits_{\ell=0}^{\infty}\sum\limits_{m=1}^{d_{\ell}}\sum\limits_{k=1}^{d_{\ell}}\sigma(\ell)_{mk}e_{\ell}^m(x)\overline{e_{\ell}^k(y)}\\
=&\sum\limits_{\ell=0}^{\infty}\Tr(e_{\ell}(x)^{\top}\sigma(\ell)\overline{e_{\ell}(y)})\\
=&\sum\limits_{\ell=0}^{\infty}\Tr(\sigma(\ell)\overline{e_{\ell}(y)}e_{\ell}(x)^{\top})\\
=&\sum\limits_{\ell=0}^{\infty}\Tr(\sigma(\ell)Q_{\ell}(x,y)),
\end{align*}
where $$Q_{\ell}(x,y)=\overline{e_{\ell}(y)}e_{\ell}(x)^{\top}\in\ce^{d_{\ell}\times d_{\ell}}.$$ 

We notice that the matrix-valued function $$(Q_{\ell}(x,y))_{mk}=\overline{e^m_{\ell}(x)}e^k_{\ell}(y)$$ is of rank one for every $\ell$. Indeed, $(Q_{\ell}(x,y))_{mk}$ is nothing else but the tensor product of the vectors $e_{\ell}(x), \overline{e_{\ell}(y)}\in\ce^{d_{\ell}}$. Since on a normed space $F$ we have $\|u\otimes v\|_{op}=\|u\|_{F}\|v\|_{F}$, we get
\[
\|Q_{\ell}(x,y)\|_{op}=\|e_{\ell}(x)\|_{\ell^2(\ce^{d_{\ell}})}\|e_{\ell}(y)\|_{\ell^2(\ce^{d_{\ell}})}.
\]
From (\ref{EQ:K-f1}) we have
\[\sigma(\ell)=\int\limits_{M\times M}K(x,y)Q_{\ell}(x,y)^*dxdy.\]
Hence
\begin{align*}\|\sigma(\ell)\|_{op}&\leq \|K\|_{L^1(M\times M)}\sup\limits_{x,y}\|Q_{\ell}(x,y)^*\|_{op}\\
&= \|K\|_{L^1(M\times M)}\sup\limits_{x,y}\|e_{\ell}(x)\|_{\ell^2(\ce^{d_{\ell}})}\|e_{\ell}(y)\|_{\ell^2(\ce^{d_{\ell}})}.
\end{align*}

\begin{rem} We point out that the mere condition $K\in L^1(M\times M)$ does not guarantee the $L^2$ boundedness of the corresponding integral operator $T$. 
 Indeed, consider $M=\T$,
 $g\in L^1(\T)\backslash L^2(\T), h\equiv 1\in L^1(\T)$,
 and the kernel $$K(\theta, \phi):=g(\theta)h(\phi)\in L^1(\T\times \T).$$ It is easy to see that 
 the kernel $K(\theta, \phi)$ does not define an operator from $L^2(\T)$ into
$L^2(\T)$. For example, with $f=1\in L^2(\T)$ we have
\[(T1)(\theta)=g(\theta)\int_{\T}h(\phi)d\phi =g(\theta)\notin L^2(\T).\]
\end{rem}

\section{Applications to the nuclearity of operators in $L^{p}(M)$}
\label{SEC:nuclearity}

We now turn to the study of nuclearity in $L^{p}$-spaces on closed manifolds. 
Sufficient conditions for $r$-nuclearity on 
$L^p$ on compact Lie groups have been established in \cite{dr13a:nuclp}.
The study of nuclearity on $L^p$ in this section
relies on the analysis of suitable kernel decompositions and the relation 
between kernels and symbols described in Section \ref{SEC:kernels}. 

Let $E$ and $F$ be two Banach spaces and $0<r\leq 1$, a linear operator $T$
from $E$ into $F$ is called {\em r-nuclear} if there exist sequences
$(x_{n}')\mbox{ in } E' $ and $(y_n) \mbox{ in } F$ so that
\beq Tx= \sum\limits_n \left <x,x_{n}'\right>y_n \,\mbox{ and }\,
\sum\limits_n \|x_{n}'\|^{r}_{E'}\|y_n\|^{r}_{F} < \infty.\label{rn}\eq
When $r=1$ they are known as {\em nuclear operators}, in that case this definition agrees with the concept of trace class operator in the setting of Hilbert spaces ($E=F=H$). 
More generally, Oloff proved in \cite{Oloff:pnorm} that the class of $r$-nuclear
operators coincides with the Schatten class $S_{r}(H)$ when $E=F=H$ and 
$0<r\leq 1$.

The concept of $r$-nuclearity was introduced by Grothendieck \cite{gro:me},
and it has application to questions of the distribution of eigenvalues of operators
in Banach spaces via e.g. the Grothendieck-Lidskii formula. 
We refer to \cite{dr13a:nuclp} for several conclusions 
in the setting of compact Lie groups
concerning
summability and distribution of eigenvalues of operators on $L^{p}$-spaces once
we have information on their $r$-nuclearity.
Since these arguments are then purely functional analytic, they apply equally well
in the present setting of closed manifolds; we omit the repetition but refer the reader
to \cite{dr13a:nuclp} for several relevant applications.

The $r$-nuclear operators on Lebesgue spaces are characterised by the following theorem 
(cf. \cite{del:tracetop}). 
In the statement below we  consider 
$({\Omega}_1,{\mathcal{M}}_1,\mu_1 )$ and $({\Omega}_2,{\mathcal{M}}_2,{\mu}_2)$ to be two $\sigma$-finite measure spaces. 
\begin{thm}\label{ch2} 
Let $1\leq p_1,p_2 <\infty$, $\,0<r\leq 1$ and let $q_1$ be such that
$\frac{1}{p_1}+\frac{1}{q_1}=1$. 
 An operator $T:L^{p_1}({\mu}_1)\rightarrow L^{p_2}({\mu}_2)$ is $r$-nuclear if and only if 
 there exist sequences
 $(g_n)_n$ in $L^{p_2}({\mu}_2)$, and $(h_n)_n$ in $L^{q_1}(\mu_1)$ such that
  $\sum \limits_{n=1}^\infty \| g_n\|_{L^{p_2}}^r
 \|h_n\|_{L^{q_1}}^r<\infty$, and such that for all $f\in L^{p_1}(\mu_1)$ we have
$$Tf(x)=\int\left(\sum\limits_{n=1}^{\infty}
  g_n(x)h_n(y)\right)f(y)d\mu_1(y), \quad \mbox{for  a.e }x.$$
\end{thm}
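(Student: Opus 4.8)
The plan is to read the abstract definition \eqref{rn} of $r$-nuclearity against the concrete description of the dual of $L^{p_1}(\mu_1)$. Since $1\leq p_1<\infty$ and $\mu_1$ is $\sigma$-finite, one has the isometric identification $(L^{p_1}(\mu_1))'\cong L^{q_1}(\mu_1)$, where a functional $x_n'$ is represented by a unique $h_n\in L^{q_1}(\mu_1)$ via the bilinear pairing $\langle f,x_n'\rangle=\int f(y)h_n(y)\,d\mu_1(y)$ with $\|x_n'\|_{(L^{p_1})'}=\|h_n\|_{L^{q_1}}$. Under this identification the abstract nuclear expansion $Tf=\sum_n\langle f,x_n'\rangle y_n$ becomes, on setting $g_n:=y_n\in L^{p_2}(\mu_2)$, the candidate kernel representation, and the summability condition $\sum_n\|x_n'\|^r_{(L^{p_1})'}\|y_n\|^r_{L^{p_2}}<\infty$ is exactly $\sum_n\|h_n\|^r_{L^{q_1}}\|g_n\|^r_{L^{p_2}}<\infty$. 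Thus both implications reduce to the same bookkeeping, and the genuine content is to justify that the $L^{p_2}$-convergent abstract series may be rewritten as a pointwise integral against the kernel $\sum_n g_n(x)h_n(y)$.

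The device that legitimises every interchange is the elementary fact that for $0<r\leq 1$ the condition $\ell^r$ upgrades to $\ell^1$: if $a_n\geq 0$ and $\sum_n a_n^r<\infty$, then $a_n^r\leq 1$ for large $n$, whence $a_n=(a_n^r)^{1/r}\leq a_n^r$ and $\sum_n a_n<\infty$. Applying this with $a_n=\|g_n\|_{L^{p_2}}\|h_n\|_{L^{q_1}}$ gives $\sum_n\|g_n\|_{L^{p_2}}\|h_n\|_{L^{q_1}}<\infty$. I would then introduce the auxiliary function $G(x):=\sum_n|g_n(x)|\,\|h_n\|_{L^{q_1}}$ and note, by the triangle inequality in $L^{p_2}$ (valid as $1\leq p_2<\infty$) together with monotone convergence, that $\|G\|_{L^{p_2}}\leq\sum_n\|g_n\|_{L^{p_2}}\|h_n\|_{L^{q_1}}<\infty$. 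Hence $G\in L^{p_2}(\mu_2)$ and $G(x)<\infty$ for a.e. $x$, so for a.e. $x$ the series $\sum_n g_n(x)h_n$ converges absolutely in $L^{q_1}(\mu_1)$ to a function $y\mapsto K(x,y)$ with $\|K(x,\cdot)\|_{L^{q_1}}\leq G(x)$. Pairing this convergent series with $f\in L^{p_1}=(L^{q_1})'$ may then be performed termwise:
\[
\int K(x,y)f(y)\,d\mu_1(y)=\sum_n g_n(x)\int h_n(y)f(y)\,d\mu_1(y)=\sum_n\langle f,x_n'\rangle g_n(x).
\]

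It remains to match the right-hand side with $Tf(x)$ for a.e. $x$. The abstract series $\sum_n\langle f,x_n'\rangle g_n$ converges to $Tf$ in $L^{p_2}(\mu_2)$, since its tails are bounded by $\|f\|_{L^{p_1}}\sum_{n\geq N}\|h_n\|_{L^{q_1}}\|g_n\|_{L^{p_2}}\to 0$; it also converges absolutely for a.e. $x$, being dominated by $\|f\|_{L^{p_1}}G(x)$. Extracting an a.e.-convergent subsequence of the partial sums then identifies the pointwise limit with $Tf(x)$ a.e., which yields the asserted formula. The converse runs the same computation backwards: given $(g_n)\subset L^{p_2}$ and $(h_n)\subset L^{q_1}$ with $\sum_n\|g_n\|_{L^{p_2}}^r\|h_n\|_{L^{q_1}}^r<\infty$, the functionals $x_n'(f)=\int f h_n\,d\mu_1$ and vectors $y_n=g_n$ exhibit $T$ as $r$-nuclear, the required $L^{p_2}$-convergence of $\sum_n\langle f,x_n'\rangle y_n$ again following from the $\ell^1$ bound. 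I expect the main obstacle to be precisely this passage between norm convergence in $L^{p_2}$ and the pointwise a.e. evaluation of the kernel integral; it is also the point where the hypotheses $0<r\leq 1$, $1\leq p_1<\infty$ and the $\sigma$-finiteness of $\mu_1$ enter in an essential way.
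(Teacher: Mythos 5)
Your proposal is correct, but note that there is no internal proof in the paper to compare it against: Theorem \ref{ch2} is stated with the citation ``(cf. \cite{del:tracetop})'' and is imported as a known characterisation rather than proved. Your argument is a complete and self-contained proof of exactly the kind that underlies the cited result: the isometric identification $(L^{p_1}(\mu_1))'\cong L^{q_1}(\mu_1)$ (where $\sigma$-finiteness is genuinely needed for the case $p_1=1$, $q_1=\infty$), the elementary upgrade $\ell^r\hookrightarrow\ell^1$ for $0<r\leq 1$, the majorant $G(x)=\sum_n|g_n(x)|\,\|h_n\|_{L^{q_1}}\in L^{p_2}$ justifying that $K(x,\cdot)=\sum_n g_n(x)h_n$ is a well-defined element of $L^{q_1}$ for a.e.\ $x$, termwise integration against $f$ via the boundedness of $u\mapsto\int uf\,d\mu_1$ on $L^{q_1}$, and finally the reconciliation of the $L^{p_2}$-limit of the partial sums with their a.e.\ limit by passing to an a.e.-convergent subsequence. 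Each of these steps is sound, and the converse direction is correctly reduced to the same computation run backwards. The only point you could make slightly more explicit is the joint measurability of $(x,y)\mapsto\sum_n g_n(x)h_n(y)$ on the product space (it follows from your bound on $G$ together with monotone convergence and Fubini on the $\sigma$-finite product), but this is routine and does not constitute a gap.
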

In order to study nuclearity on $L^p(M)$ spaces for a given compact manifold $M$ of dimension $n$, we introduce a function $\Lambda(j,k;n,p)$ which controls the $L^p$-norms of the family of eigenfunctions $\{e_{j}^{k}\}$ of the operator $E$, i.e. we will suppose that 
$\Lambda(j,k;n,p)$ is such that we have the estimates
\beq\|e_{j}^{k}\|_{L^{p}(M)}\leq \Lambda(j,k;n,p).\label{control}\eq

In particular, if $\Lambda$ is such a function we observe that 
\[\|e_{j}^{k}\|_{L^{p}(M)}\leq vol(M)^{\frac 1p}\Lambda(j,k;n,\infty).\]

When $M=G$ is a compact Lie group efficient
$\|e_{j}^{k}\|_{L^{p}(G)}$ bounds can be obtained (cf. \cite{dr13a:nuclp}). 
The estimation of $L^p$ norms for eigenfunctions of differential elliptic 
operators on general closed manifolds has been largely studied, see for instance \cite{sogge-z:ei}. 
Some examples will be given at the end of this section.
An example can be also obtained from the following simple lemma:

\begin{lem}\label{control1} 
Let $f$ be such that $\|f\|_{L^2(M)}=1$, then
\begin{itemize}
\item[(i)] $\|f\|_{L^p(M)}\leq (vol(M))^{\frac{2-p}{2p}}$ if $\,1\leq p\leq 2 .$
\item[(ii)] $\|f\|_{L^p(M)}\leq \|f\|_{L^{\infty}(M)}^{\frac{p-2}{p}}$ if $\,2\leq p<\infty .$
\end{itemize}
\end{lem}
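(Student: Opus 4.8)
The plan is to derive both inequalities directly from H\"older's inequality, exploiting the finiteness of $vol(M)$ (since $M$ is compact) and the normalization $\|f\|_{L^2(M)}=1$. Neither part requires any structure of $f$ beyond its membership in the relevant $L^p$ spaces, so the argument is purely measure-theoretic.

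For part (i), with $1\le p<2$, I would write $|f|^p=|f|^p\cdot 1$ and apply H\"older's inequality with the conjugate exponents $\frac{2}{p}$ and $\frac{2}{2-p}$ (which are conjugate since $\frac{p}{2}+\frac{2-p}{2}=1$), obtaining
\[
\int_M|f|^p\,dx\le\Big(\int_M|f|^2\,dx\Big)^{p/2}\Big(\int_M 1\,dx\Big)^{(2-p)/2}=\|f\|_{L^2(M)}^p\,(vol(M))^{(2-p)/2}.
\]
Taking $p$-th roots and using $\|f\|_{L^2(M)}=1$ yields $\|f\|_{L^p(M)}\le (vol(M))^{(2-p)/(2p)}$. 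The endpoint $p=2$ is trivial, since then the exponent $\frac{2-p}{2p}$ vanishes and both sides equal $1$.

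For part (ii), with $2\le p<\infty$, I would instead split $|f|^p=|f|^{p-2}|f|^2$ and estimate $|f|^{p-2}\le\|f\|_{L^\infty(M)}^{p-2}$ pointwise almost everywhere, which gives
\[
\int_M|f|^p\,dx\le\|f\|_{L^\infty(M)}^{p-2}\int_M|f|^2\,dx=\|f\|_{L^\infty(M)}^{p-2}\|f\|_{L^2(M)}^2=\|f\|_{L^\infty(M)}^{p-2}.
\]
Taking $p$-th roots gives $\|f\|_{L^p(M)}\le\|f\|_{L^\infty(M)}^{(p-2)/p}$, again with the case $p=2$ immediate.

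Since both estimates reduce to a single application of H\"older's inequality, there is no substantial obstacle here; the only points demanding minor care are verifying the conjugacy of the exponents in (i) and treating the degenerate endpoint $p=2$ separately (or observing that the stated formulas remain valid there by continuity). Note that compactness of $M$ is used only through $vol(M)<\infty$ in part (i), while part (ii) needs no finiteness of the measure at all.
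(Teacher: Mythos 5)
Your proof is correct and follows essentially the same route as the paper: part (i) is Hölder's inequality with conjugate exponents $\frac{2}{p}$ and $\frac{2}{2-p}$ applied to $|f|^p\cdot 1$, and part (ii) is the pointwise splitting $|f|^p=|f|^{p-2}|f|^2$ with the sup-norm bound on the first factor. The extra remarks on the endpoint $p=2$ and on where finiteness of $vol(M)$ enters are fine but not needed beyond what the paper does.
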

\begin{proof} (i) By H\"older inequality we have
\begin{align*} \int_M|f(x)|^pdx\leq \left(\int_M|f(x)|^{p\frac 2p}dx\right)^{\frac p2}
\left(\int_M|1|^{p\frac{2}{2-p}}dx\right)^{\frac {2-p}{2}}
=(vol(M))^{\frac {2-p}{2}}.
\end{align*}
(ii) We also have
\begin{align*} \int_M|f(x)|^pdx=\int_M|f(x)|^{p-2}|f(x)|^{2}dx
\leq \|f\|_{L^{\infty}(M)}^{p-2},
\end{align*}
completing the proof. 
\end{proof}
We now formulate a sufficient condition for the $r$-nuclearity on $L^p(M)$ spaces 
as an application of the notion of the matrix-symbol on closed manifolds. 
Inspired by Lemma \ref{control1}, we will use the following function 
$\tilde{p}$ for $1\leq p\leq \infty$: 
\beq\label{ppp} \tilde{p}:=\left\{
\begin{array}{rl}
0\,,&\, \mbox{ if } 1\leq p\leq 2,\\
\frac{p-2}{p},\,&\,\mbox{ if } 2<p<\infty ,\\
1,\,&\,\mbox{ if } p=\infty .
\end{array} \right. \eq
For $p_{1}, p_{2}$ we denote their dual indices by $q_{1}:=p_{1}^{\prime}$,
$q_{2}:=p_{2}^{\prime}$.

\begin{thm}\label{main126} 
Let $1\leq p_1,p_2 <\infty$ and $0<r\leq 1$. Let $T:L^{p_1}(M)\to L^{p_2}(M)$ be a
strongly invariant linear 
continuous operator. Assume that its matrix-valued symbol $\sigma(\ell)$ satisfies 
\[\sum\limits_{\ell=0}^{\infty} \sum\limits_{m,k=1}^{d_{\ell}}
|\sigma(\ell)_{mk}|^r\Lambda(\ell,m;n,\infty)^{\tilde{p_2}r}\Lambda(\ell,k;n,\infty)^{\tilde{q_1}r}<\infty.\]
Then the operator $T:L^{p_1}(M)\rightarrow L^{p_2}(M)$ is $r$-nuclear. 
\end{thm}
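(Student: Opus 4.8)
The plan is to apply the characterisation of $r$-nuclear operators on Lebesgue spaces given in Theorem \ref{ch2}, feeding into it the explicit kernel of $T$ produced by the symbol calculus of Section \ref{SEC:kernels}. Since $T$ is strongly invariant with matrix symbol $\sigma$, its integral kernel is given by \eqref{kerK}, that is, $K(x,y)=\sum_{\ell=0}^{\infty}\sum_{m,k=1}^{d_{\ell}}\sigma(\ell)_{mk}\,e_{\ell}^m(x)\,\overline{e_{\ell}^k(y)}$. Reading off this expansion, I would set $g_{\ell,m,k}(x):=\sigma(\ell)_{mk}\,e_{\ell}^m(x)$ and $h_{\ell,m,k}(y):=\overline{e_{\ell}^k(y)}$, so that $K(x,y)=\sum_{\ell,m,k}g_{\ell,m,k}(x)\,h_{\ell,m,k}(y)$; enumerating the countable index set $\{(\ell,m,k):\ell\in\N_{0},\,1\leq m,k\leq d_{\ell}\}$ turns these into the two sequences in $L^{p_2}(M)$ and $L^{q_1}(M)$ required by Theorem \ref{ch2}. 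Since $\|g_{\ell,m,k}\|_{L^{p_2}}=|\sigma(\ell)_{mk}|\,\|e_{\ell}^m\|_{L^{p_2}}$ and $\|h_{\ell,m,k}\|_{L^{q_1}}=\|e_{\ell}^k\|_{L^{q_1}}$, the whole matter reduces to bounding $\sum_{\ell,m,k}\|g_{\ell,m,k}\|_{L^{p_2}}^r\|h_{\ell,m,k}\|_{L^{q_1}}^r$ by the quantity assumed finite in the hypothesis.

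The crucial bridging step is to control the $L^{p}$-norms of the eigenfunctions by $\Lambda(\cdot\,;n,\infty)^{\tilde p}$. As every $e_{\ell}^{k}$ satisfies $\|e_{\ell}^{k}\|_{L^2(M)}=1$, I would combine Lemma \ref{control1} with the control \eqref{control} to show that, for each exponent $1\leq p\leq\infty$, there is a constant $C_p\geq 1$ depending only on $p$ and $vol(M)$ such that $\|e_{\ell}^{k}\|_{L^{p}(M)}\leq C_p\,\Lambda(\ell,k;n,\infty)^{\tilde p}$. This has to be checked against the three regimes of the definition \eqref{ppp} of $\tilde p$: for $1\leq p\leq 2$ one has $\tilde p=0$ and Lemma \ref{control1}(i) gives the uniform bound with $C_p=vol(M)^{\frac{2-p}{2p}}$; for $2<p<\infty$ one has $\tilde p=\frac{p-2}{p}$, and Lemma \ref{control1}(ii) together with $\|e_{\ell}^{k}\|_{L^\infty}\leq\Lambda(\ell,k;n,\infty)$ gives it with $C_p=1$; and for $p=\infty$, where $\tilde p=1$, the bound \eqref{control} gives it directly.

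Applying this estimate with $p=p_2$ to the factor $\|e_{\ell}^m\|_{L^{p_2}}$ and with $p=q_1$ to the factor $\|e_{\ell}^k\|_{L^{q_1}}$ — noting that the endpoint $p_1=1$ forces $q_1=\infty$, which is precisely the case $\tilde{q_1}=1$ handled above — I arrive at $\sum_{\ell,m,k}\|g_{\ell,m,k}\|_{L^{p_2}}^r\|h_{\ell,m,k}\|_{L^{q_1}}^r\leq C_{p_2}^r C_{q_1}^r\sum_{\ell=0}^{\infty}\sum_{m,k=1}^{d_\ell}|\sigma(\ell)_{mk}|^r\,\Lambda(\ell,m;n,\infty)^{\tilde{p_2}r}\,\Lambda(\ell,k;n,\infty)^{\tilde{q_1}r}$, and the right-hand side is finite by hypothesis. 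Theorem \ref{ch2} then yields the $r$-nuclearity of $T$.

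The one point I expect to need genuine care is the justification that the distributionally derived series \eqref{kerK} represents $T$ as an operator $L^{p_1}(M)\to L^{p_2}(M)$ in the pointwise kernel form demanded by Theorem \ref{ch2}, rather than only in the distributional sense in which it was obtained. Here I would use that $0<r\leq 1$: the $r$-summability just established forces $\|g_{\ell,m,k}\|_{L^{p_2}}\|h_{\ell,m,k}\|_{L^{q_1}}\to 0$ and hence upgrades to ordinary summability $\sum_{\ell,m,k}\|g_{\ell,m,k}\|_{L^{p_2}}\|h_{\ell,m,k}\|_{L^{q_1}}<\infty$, which gives absolute convergence of the kernel series and legitimises the representation $Tf(x)=\int_M\big(\sum_{\ell,m,k}g_{\ell,m,k}(x)h_{\ell,m,k}(y)\big)f(y)\,dy$ first for $f$ in the dense subspace $C^{\infty}(M)$ and then, by the continuity of $T$, for all $f\in L^{p_1}(M)$.
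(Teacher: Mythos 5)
Your proposal is correct and follows essentially the same route as the paper's own proof: expand the kernel via \eqref{kerK}, factor it as $g_{\ell,m,k}(x)=\sigma(\ell)_{mk}e_{\ell}^m(x)$ and $h_{\ell,k}(y)=\overline{e_{\ell}^k(y)}$, bound $\|e_{\ell}^m\|_{L^{p}}\leq C_p\Lambda(\ell,m;n,\infty)^{\tilde p}$ via Lemma \ref{control1} together with \eqref{control}, and conclude by Theorem \ref{ch2}. Your additional care about the three regimes of $\tilde p$ and the upgrade from $r$-summability to absolute summability (legitimising the pointwise kernel representation) is sound detail that the paper leaves implicit.
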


\begin{proof} By \eqref{kerK} the kernel of $T$ is given by
\[K(x,y)=\sum\limits_{\ell=0}^{\infty}\sum\limits_{m=1}^{d_{\ell}}\sum\limits_{k=1}^{d_{\ell}}\sigma(\ell)_{mk}e_{\ell}^m(x)\overline{e_{\ell}^k(y)}.\]
We set
\[g_{\ell,m,k}(x):=\sigma(\ell)_{mk}e_{\ell}^m(x),\,h_{\ell,k}(y):=\overline{e_{\ell}^k(y)}.\]
Now, by Lemma \ref{control1} we have
\[
\|e_{\ell}^m\|_{L^{p}}\leq C_{p}\Lambda(\ell,m;n,\infty)^{\tilde{p}},
\]
 where $C_p=\max\{(vol(M))^{\frac{2-p}{2p}},1\}$.
We now observe that
\begin{align*} \sum\limits_{\ell,m,k}\|g_{\ell,m,k}\|_{L^{p_2}}^r\|h_{\ell,k}\|_{L^{q_1}}^r=&\sum\limits_{\ell=0}^{\infty}\sum\limits_{m,k=1}^{d_{\ell}}\|\sigma(\ell)_{mk}e_{\ell}^m\|_{L^{p_2}}^r\|\overline{e_{\ell}^k}\|_{L^{q_1}}^r\\
=&\sum\limits_{\ell=0}^{\infty}\sum\limits_{m,k=1}^{d_{\ell}}|\sigma(\ell)_{mk}|^r\|e_{\ell}^m\|_{L^{p_2}}^r\|e_{\ell}^k\|_{L^{q_1}}^r\\
\leq & (C_{p_{2}} C_{q_{1}})^{r}
\sum\limits_{\ell=0}^{\infty}\sum\limits_{m,k=1}^{d_{\ell}}|\sigma(\ell)_{mk}|^r\Lambda(\ell,m;n,\infty)^{\tilde{p_2}r}\Lambda(\ell,k;n,\infty)^{\tilde{q_1}r},
\end{align*}
finishing the proof in view of Theorem \ref{ch2}.
\end{proof}
In particular for formally self-adjoint invariant operators we can diagonalise each 
matrix $\sigma(\ell)$, so that we have
\begin{cor}\label{main12a} 
Let $1\leq p_1,p_2 <\infty$ and $0<r\leq 1$. 
Let $T:L^{p_1}(M)\to L^{p_2}(M)$ be a strongly invariant formally self-adjoint  
continuous operator. Assume that its matrix-valued symbol $\sigma(\ell)$ satisfies 
\[\sum\limits_{\ell=0}^{\infty}\sum\limits_{m=1}^{d_{\ell}}|\sigma(\ell)_{mm}|^r\Lambda(\ell,m;n,\infty)^{(\tilde{p_2}+\tilde{q_1})r}<\infty.\]
 Then the operator $T:L^{p_1}(M)\rightarrow L^{p_2}(M)$ is $r$-nuclear. 
\end{cor}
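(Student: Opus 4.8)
The plan is to deduce the corollary directly from Theorem \ref{main126} by using formal self-adjointness to pass to a basis in which each symbol matrix is diagonal. First I would check that, for a formally self-adjoint invariant operator, every symbol matrix $\sigma(\ell)\in\ce^{d_\ell\times d_\ell}$ is Hermitian. By the formula \eqref{symbinv2} we have $\sigma(\ell)_{mk}=(Te_\ell^k,e_\ell^m)$, and since the elements of $H_\ell$ are smooth, formal self-adjointness gives $(Te_\ell^k,e_\ell^m)=(e_\ell^k,Te_\ell^m)=\overline{(Te_\ell^m,e_\ell^k)}=\overline{\sigma(\ell)_{km}}$, so $\sigma(\ell)^{*}=\sigma(\ell)$. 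Equivalently, the restriction $T|_{H_\ell}:H_\ell\to H_\ell$, which is well defined by property (i) of Theorem \ref{THM:inv}, is a self-adjoint operator on the finite-dimensional Hilbert space $H_\ell$.

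Next I would diagonalize. Within each eigenspace $H_\ell$ I choose a new orthonormal basis consisting of eigenvectors of the self-adjoint operator $T|_{H_\ell}$. Because every vector of $H_\ell$ is again an eigenfunction of $E$ with eigenvalue $\lambda_\ell$, this produces a new orthonormal basis of $L^2(M)$ made of eigenfunctions of $E$, i.e. a basis of the form \eqref{fam}, with respect to which the symbol of $T$ is diagonal and its diagonal entries $\sigma(\ell)_{mm}$ are the (real) eigenvalues of $T|_{H_\ell}$. This is exactly the diagonalization alluded to in the paragraph preceding the corollary.

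Finally I would apply Theorem \ref{main126} in this diagonalizing basis. Since the off-diagonal entries $\sigma(\ell)_{mk}$ with $m\neq k$ now vanish, the double sum over $m,k$ in the hypothesis of Theorem \ref{main126} collapses to a single sum over the diagonal index $m$, and the two weight factors, now carrying the same index, combine as $\Lambda(\ell,m;n,\infty)^{\tilde{p_2}r}\,\Lambda(\ell,m;n,\infty)^{\tilde{q_1}r}=\Lambda(\ell,m;n,\infty)^{(\tilde{p_2}+\tilde{q_1})r}$. The resulting condition is precisely the summability assumption of the corollary, so Theorem \ref{main126} yields the $r$-nuclearity of $T:L^{p_1}(M)\to L^{p_2}(M)$.

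The one point requiring care is the consistency of the control \eqref{control}: the function $\Lambda(\ell,m;n,\infty)$ must bound the $L^\infty$ norms of the new, diagonalizing eigenfunctions and not merely those of the original basis. This is automatic whenever $\Lambda$ depends only on the eigenvalue $\lambda_\ell$ (and not on the particular choice of orthonormal basis of $H_\ell$), which is the situation for the sup-norm estimates we have in mind; otherwise one reads $\Lambda$ as the bound attached to the diagonalizing basis. This is the only genuine obstacle, and it is mild, the remaining steps being a routine specialization of Theorem \ref{main126}.
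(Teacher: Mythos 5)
Your proof is correct and follows essentially the paper's own route: the paper likewise uses formal self-adjointness together with invariance to diagonalise each $\sigma(\ell)$ by an orthonormal change of basis within the eigenspace $H_\ell$ (which stays inside the eigenfunctions of $E$), and then applies Theorem \ref{main126}, the double sum collapsing to the diagonal one. Your closing caveat, that $\Lambda(\ell,m;n,\infty)$ must control the sup-norms of the \emph{diagonalising} basis, is a point the paper leaves implicit and effectively settles only in Corollary \ref{main12ab}, where $\Lambda$ is taken independent of $m$.
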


In some cases it is possible to simplify the sufficient condition above when the control function $\Lambda(\ell,m;n,\infty)$ is independent of $m$. For instance a classical result
(local Weyl law) due to H{\"o}rmander 
(\cite[Theorem 5.1]{ho:eigen1},  \cite[Chapter XXIX]{ho:apde4}) implies the following
estimate:
\begin{lem} \label{LEM:Horm}
Let $M$ be a closed manifold of dimension $n$. Let $E\in \Psi_{+e}^{\nu}(M)$, then
\beq\|e_{\ell}^m\|_{L^{\infty}}\leq C\lambda_{\ell}^{\frac{n-1}{2\nu}}.\label{asein}\eq 
\end{lem}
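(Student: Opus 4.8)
The plan is to deduce \eqref{asein} from the sharp pointwise (local) Weyl law for the spectral function of a first-order operator, so the first step is a reduction to order $1$. I would pass to the complex power $A:=E^{1/\nu}$, which by Seeley's calculus \cite{Seeley:complex-powers-1967} is again a classical, positive, elliptic, self-adjoint pseudo-differential operator, now of order $1$. Crucially, $A$ has the \emph{same} eigenfunctions $e_\ell^m$ as $E$, with eigenvalues $\mu_\ell:=\lambda_\ell^{1/\nu}$. Hence it suffices to prove the uniform bound $|e_\ell^m(x)|\le C\,\mu_\ell^{(n-1)/2}$, since $\mu_\ell^{(n-1)/2}=\lambda_\ell^{(n-1)/(2\nu)}$.

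Next I would introduce the spectral function of $A$ on the diagonal,
\[
e(x,x,\mu):=\sum_{\mu_\ell\le\mu}\ \sum_{m=1}^{d_\ell}|e_\ell^m(x)|^2,
\]
and invoke Hörmander's theorem (\cite[Theorem 5.1]{ho:eigen1}, \cite[Chapter XXIX]{ho:apde4}), which gives the asymptotics $e(x,x,\mu)=C(x)\mu^n+O(\mu^{n-1})$ with remainder \emph{uniform} in $x\in M$; that is, there is $C_0>0$ with $|e(x,x,\mu)-C(x)\mu^n|\le C_0\mu^{n-1}$ for all $x$ and all $\mu\ge 1$. The uniformity in $x$ is precisely what upgrades a pointwise estimate to an $L^\infty$ bound.

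The key step is then to read off the contribution of a single eigenvalue as a jump of the step function $\mu\mapsto e(x,x,\mu)$. Since the $e_\ell^m$ with fixed $\ell$ form an orthonormal basis of the $\mu_\ell$-eigenspace of $A$, we have $\sum_{m=1}^{d_\ell}|e_\ell^m(x)|^2=e(x,x,\mu_\ell)-e(x,x,\mu_\ell^-)$, where $e(x,x,\mu_\ell^-)$ denotes the left limit. Subtracting the common continuous term $C(x)\mu_\ell^n$ and applying the uniform remainder bound at $\mu_\ell$ and (after passing to the limit $\mu\uparrow\mu_\ell$) at $\mu_\ell^-$, the term $C(x)\mu_\ell^n$ cancels and I obtain $\sum_{m=1}^{d_\ell}|e_\ell^m(x)|^2\le 2C_0\,\mu_\ell^{n-1}$, uniformly in $x$. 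Dominating each single summand by the whole sum yields $|e_\ell^m(x)|^2\le 2C_0\,\mu_\ell^{n-1}=2C_0\,\lambda_\ell^{(n-1)/\nu}$ for every $x$ and $m$; taking square roots and then the supremum over $x$ gives $\|e_\ell^m\|_{L^\infty}\le C\,\lambda_\ell^{(n-1)/(2\nu)}$, which is \eqref{asein}. (The local Weyl law is applied for $\mu_\ell\ge 1$; the finitely many eigenvalues with $\lambda_\ell<1$ are handled by enlarging $C$, reading the right-hand side with $1+\lambda_\ell$ if needed, consistently with Proposition \ref{weyl2a}.)

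The only genuinely hard ingredient is the uniform-in-$x$ remainder $O(\mu^{n-1})$ in the pointwise Weyl law, which is Hörmander's theorem and which I would quote as a black box; everything else is elementary. In particular, no multiplicity or regularity information about the individual $\lambda_\ell$ or $d_\ell$ is required, because the bound on the jump is independent of the size of the spectral gap to the previous eigenvalue.
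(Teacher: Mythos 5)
Your proposal is correct and follows essentially the same route as the paper: both invoke H\"ormander's local Weyl law with an $x$-uniform remainder and bound $\sum_{m}|e_\ell^m(x)|^2$ by the jump of the spectral function at the eigenvalue, where the continuous Weyl main term cancels and only the two remainder terms survive. The only cosmetic difference is that you first pass to the first-order power $E^{1/\nu}$ before quoting the Weyl law, while the paper applies H\"ormander's theorem to $E$ directly with remainder $O(\lambda^{(n-1)/\nu})$ -- an equivalent reparametrization -- and your explicit treatment of the finitely many small eigenvalues is a minor refinement of the same argument.
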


\begin{proof}
In order to explain this estimate we first consider the family of eigenvalues $\{\lambda_{\ell}\}$ of $E$ ordered in the increasing order
\[0=\lambda_{0}\leq \lambda_{1}\leq\cdots\lambda_{\ell}\leq\cdots\]  
and counted with multiplicity. For the projection $P_{\ell}(f)$ onto $H_{\ell}$, consider $E_{\lambda}f:=\sum\limits_{\lambda_{\ell}\leq\lambda}P_{\ell}(f)$ the associated partial sum operators. Its kernel is given by
\[E_{\lambda}(x,y)=\sum\limits_{\lambda_{\ell}\leq\lambda}\sum\limits_{m=1}^{d_{\ell}}e_{\ell}^m(x)\overline{e_{\ell}^m(y)}.\]
If $p(x,\xi)$ is the principal symbol of $E$, by Theorem 5.1 of  \cite{ho:eigen1} we have
\beq\label{ho23}E_{\lambda}(x,x)=\sum\limits_{\lambda_{\ell}\leq\lambda}\sum\limits_{m=1}^{d_{\ell}}|e_{\ell}^m(x)|^2=(2\pi)^{-n}\int\limits_{p(x,\xi)\leq\lambda}d\xi+R(x,\lambda)\eq
with \[|R(x,\lambda)|\leq C\lambda^{\frac{n-1}{\nu}},\, x\in M.\]

Since $E_{\mu}(x,x)$ is increasing right-continuous with respect to  $\mu$, the fact that the spectrum of $E$ is discrete, by the continuity of $\int\limits_{p(x,\xi)\leq\mu}d\xi$ with respect to $\mu$ and by taking left-hand limit in (\ref{ho23}) we obtain
\begin{align*}\lim\limits_{\mu\rightarrow \lambda^{-}}E_{\mu}(x,x)=\sum\limits_{\lambda_{\ell}<\lambda}\sum\limits_{m=1}^{d_{\ell}}|e_{\ell}^m(x)|^2
=(2\pi)^{-n}\int\limits_{p(x,\xi)\leq\lambda}d\xi+R(x,\lambda^{-}).
\end{align*} 
Hence
\[E_{\lambda_{\ell}}(x,x)-E_{\lambda_{\ell}^{-}}(x,x)=\sum\limits_{m=1}^{d_{\ell}}|e_{\ell}^m(x)|^2=R(x,\lambda_{\ell})-R(x,\lambda_{\ell}^{-}).\]
In particular, we have
\[|e_{\ell}^m(x)|\leq 2(\sqrt{R(x,\lambda_{\ell})}+\sqrt{R(x,\lambda_{\ell}^{-})}\,)\leq 2C\lambda_{\ell}^{\frac{n-1}{2\nu}},\]
which proves Lemma \ref{LEM:Horm}.
 \end{proof}

Thus $\Lambda(\ell;n,\infty)=C\lambda_{\ell}^{\frac{n-1}{2\nu}}$ furnishes an example of $\Lambda$ independent of $m$. For controls of type $\Lambda(\ell;n,\infty)$ we have a basis-independent condition: 

\begin{cor}\label{main12ab} 
Let $1\leq p_1,p_2 <\infty$ and $0<r\leq 1$. 
Let $T:L^{p_1}(M)\to L^{p_2}(M)$ be a strongly invariant formally self-adjoint  
continuous operator. Assume that its matrix-valued symbol $\sigma(\ell)$ satisfies 
\[
\sum\limits_{\ell=0}^{\infty}
\|\sigma(\ell)\|_{S_r}^r\Lambda(\ell;n,\infty)^{(\tilde{p_2}+\tilde{q_1})r}<\infty.
\]
Then the operator $T:L^{p_1}(M)\rightarrow L^{p_2}(M)$ is $r$-nuclear. 
In particular, if its matrix-valued symbol $\sigma(\ell)$ satisfies 
\begin{equation}\label{EQ:cor-nuc}
\sum\limits_{\ell=0}^{\infty}
\|\sigma(\ell)\|_{S_r}^r\lambda_{\ell}^{{\frac{(n-1)}{2\nu}}(\tilde{p_2}+\tilde{q_1})r}<\infty,
\end{equation}
then the operator $T:L^{p_1}(M)\rightarrow L^{p_2}(M)$ is $r$-nuclear. 
\end{cor}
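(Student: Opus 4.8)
The plan is to deduce this statement from Corollary \ref{main12a} by diagonalising the symbol, the only delicate point being that the diagonalising change of basis must not spoil the control function.

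First I would use that $T$ is formally self-adjoint: by \eqref{symbinv2} the symbol satisfies $\sigma(\ell)_{mk}=(Te_\ell^k,e_\ell^m)=(e_\ell^k,Te_\ell^m)=\overline{\sigma(\ell)_{km}}$, so each matrix $\sigma(\ell)\in\ce^{d_\ell\times d_\ell}$ is Hermitian. Hence for every $\ell$ there is a unitary $U_\ell$ on $H_\ell$, that is, a new orthonormal basis $\{\tilde e_\ell^m\}_{m=1}^{d_\ell}$ of $H_\ell$, in which $\sigma(\ell)$ becomes diagonal with diagonal entries $\mu_\ell^m$ equal to its eigenvalues. Since the Schatten quasi-norm is unitarily invariant, $\sum_{m=1}^{d_\ell}|\mu_\ell^m|^r=\|\sigma(\ell)\|_{S_r}^r$ for each $\ell$.

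The key observation I would make next is that a control function of the form $\Lambda(\ell;n,\infty)$, independent of $m$, is basis-independent. Indeed, such a bound comes from a pointwise estimate on the diagonal of the spectral projection $P_\ell$, and the kernel diagonal $P_\ell(x,x)=\sum_{m=1}^{d_\ell}|e_\ell^m(x)|^2$ does not depend on the chosen orthonormal basis of $H_\ell$; compare \eqref{ho23}. Consequently $\|\tilde e_\ell^m\|_{L^\infty}^2\le \sup_x P_\ell(x,x)\le \Lambda(\ell;n,\infty)^2$ also for the diagonalising basis $\{\tilde e_\ell^m\}$, so the estimate \eqref{control} persists after the change of basis.

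With these two facts in hand I would simply invoke Corollary \ref{main12a} in the basis $\{\tilde e_\ell^m\}$. Because $\Lambda$ is $m$-independent, its hypothesis sum collapses to $\sum_\ell \Lambda(\ell;n,\infty)^{(\tilde{p_2}+\tilde{q_1})r}\sum_m|\mu_\ell^m|^r=\sum_\ell\|\sigma(\ell)\|_{S_r}^r\Lambda(\ell;n,\infty)^{(\tilde{p_2}+\tilde{q_1})r}$, which is finite by assumption, so $T$ is $r$-nuclear. For the final claim I would feed the H\"ormander bound of Lemma \ref{LEM:Horm}, $\Lambda(\ell;n,\infty)=C\lambda_\ell^{\frac{n-1}{2\nu}}$, itself basis-independent as it is read off from \eqref{ho23}, into the first part; the constant $C^{(\tilde{p_2}+\tilde{q_1})r}$ factors out and does not affect convergence, yielding exactly \eqref{EQ:cor-nuc}. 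The main obstacle is precisely this basis-independence step: diagonalising $\sigma(\ell)$ mixes the eigenfunctions, and without the $m$-independence of $\Lambda$ the change of basis would introduce a factor such as $\sqrt{d_\ell}$ coming from $\sum_k|(U_\ell)_{mk}|\le\sqrt{d_\ell}$, which would destroy the clean bound.
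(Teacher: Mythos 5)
Your proof is correct and follows essentially the same route as the paper's: formal self-adjointness makes each $\sigma(\ell)$ Hermitian, diagonalising it within $H_\ell$ together with the unitary invariance of the $S_r$ quasi-norm reduces the hypothesis exactly to that of Corollary \ref{main12a}, and feeding in Lemma \ref{LEM:Horm} gives the second claim. The only difference is that the paper's proof passes silently over the point you single out as the main obstacle — that the control function must survive the diagonalising change of basis — and your justification, reading $\Lambda(\ell;n,\infty)$ as a bound on the basis-independent quantity $\sup_x P_\ell(x,x)$, is precisely what the local Weyl law \eqref{ho23} provides and is the reading under which the statement is applied in the paper.
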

\begin{proof} Since $T$ is $E$-invariant and formally self-adjoint, each matrix $\sigma(\ell) $ can be assumed diagonal, and the result follows from Corollary \ref{main12a} since 
\[
\sum\limits_{m=1}^{d_{\ell}}|\sigma(\ell)_{mm}|^r=\Tr(|\sigma(\ell)|^r)=\|\sigma(\ell)\|_{S_r}^r,
\]
completing the proof. The $r$-nuclearity under condition \eqref{EQ:cor-nuc}
follows by using Lemma \ref{LEM:Horm}
and
taking $\Lambda(\ell;n,\infty)=C\lambda_{\ell}^{\frac{n-1}{2\nu}}$.
\end{proof}

\begin{rem}\label{eqsmg}
If $M$ is a compact Lie group Corollary \ref{main12ab}  
absorbs Theorem 3.4 in \cite{dr13a:nuclp} by taking $E$ to be the Laplacian
and the family of eigenfunctions $\{e_{\ell}^{k}\}$ 
as in (\ref{ort11}). Indeed, since $|d_{\xi_{\ell}}^{\half}(\xi_{\ell})_{ij}(x)|\leq d_{\xi_{\ell}}^{\half} $ one can choose $\Lambda(\ell;\infty)=d_{\xi_{\ell}}^{\half}$ and taking into account that, by Lemma \ref{scheq1}, we have
\[\|\sigma(\ell)\|_{S_{r}}^r=d_{\xi_{\ell}}\|\tau(\xi_{\ell})\|_{S_{r}}^r,\]
we obtain
\[\sum\limits_{\ell}\|\sigma(\ell)\|_{S_r}^r\Lambda(\ell;\infty)^{(\tilde{p_2}+\tilde{q_1})r}=\sum\limits_{\ell}d_{\xi_{\ell}}^{1+{\half}{(\tilde{p_2}+\tilde{q_1})r}}\|\tau(\xi_{\ell})\|_{S_{r}}^r,\]
with a right-hand side equivalent to the term giving the sufficient condition in Theorem 3.4 of \cite{dr13a:nuclp}. Indeed,
\begin{align*} \half(\tilde{p_2}+\tilde{q_1})=&\half\left(1-\frac{2}{\max\{2,p_2\}}+1-\frac{2}{\max\{2,q_1\}}\right)\\
=&1-\frac{1}{\max\{2,q_1\}}-\frac{1}{\max\{2,p_2\}}\\
=&\frac{1}{\min\{2,p_1\}}-\frac{1}{\max\{2,p_2\}},
\end{align*}
which was the order obtained in \cite[Theorem 3.4]{dr13a:nuclp}
on compact Lie groups.
\end{rem}

In order to give another example we recall
Proposition \ref{weyl2a} with  
useful relations between the eigenvalues $\lambda_j$ and their multiplicities $d_j$. 
As a consequence of Corollary \ref{main12ab}  and Proposition \ref{weyl2a}, 
for the negative powers of the operator $E$ itself we obtain:

\begin{cor}\label{main12a3} 
Let $1\leq p_1,p_2 <\infty$ and $0<r\leq 1$. 
Let $E\in\Psi_{+e}^{\nu}(M) $. 
If $$\alpha>\frac{n}{r}+(\tilde{p_2}+\tilde{q_1})\frac{n-1}{2}$$
then the operator $(I+E)^{-\frac{\alpha}{\nu}}:L^{p_1}(M)\rightarrow L^{p_2}(M)$ is $r$-nuclear. 
\end{cor}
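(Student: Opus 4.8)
The plan is to recognise $(I+E)^{-\frac{\alpha}{\nu}}$ as an invariant operator and to feed its symbol into Corollary \ref{main12ab}. Since $(I+E)^{-\frac{\alpha}{\nu}}$ is a function of $E$ defined by the spectral calculus, it is strongly invariant relative to $E$ and, being a pseudo-differential operator of negative order $-\alpha$, it is continuous on $\Dcal'(M)$; moreover it is positive and formally self-adjoint because $E$ is. By \eqref{EQ:symbol-F} its matrix symbol is the scalar multiple of the identity $\sigma(\ell)=(1+\lambda_\ell)^{-\frac{\alpha}{\nu}} I_{d_\ell}$, so its singular values on $H_\ell$ are all equal to $(1+\lambda_\ell)^{-\frac{\alpha}{\nu}}$ with multiplicity $d_\ell$, which gives
\[
\|\sigma(\ell)\|_{S_r}^r = d_\ell (1+\lambda_\ell)^{-\frac{\alpha r}{\nu}}.
\]

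Next I would substitute this into the sufficient condition \eqref{EQ:cor-nuc} of Corollary \ref{main12ab}. The $r$-nuclearity of $(I+E)^{-\frac{\alpha}{\nu}}:L^{p_1}(M)\to L^{p_2}(M)$ then follows once we verify
\[
\sum_{\ell=0}^{\infty} d_\ell (1+\lambda_\ell)^{-\frac{\alpha r}{\nu}}\,\lambda_\ell^{\frac{(n-1)}{2\nu}(\tilde{p_2}+\tilde{q_1})r}<\infty.
\]
For $\ell\geq 1$ we have $\lambda_\ell>0$ and $\lambda_\ell\asymp 1+\lambda_\ell$, while the single term $\ell=0$ (where $\lambda_0=0$) is finite irrespective of the exponent; hence the convergence of the displayed series is equivalent to that of $\sum_{\ell\geq 1} d_\ell (1+\lambda_\ell)^{-q}$ with
\[
q:=\frac{r}{\nu}\Bigl(\alpha-\frac{n-1}{2}(\tilde{p_2}+\tilde{q_1})\Bigr).
\]

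Finally I would invoke the multiplicity estimate \eqref{asymp2} of Proposition \ref{weyl2a}, by which $\sum_{\ell\geq 1} d_\ell (1+\lambda_\ell)^{-q}<\infty$ if and only if $q>\frac{n}{\nu}$. Rearranging $q>\frac{n}{\nu}$ gives precisely $\alpha>\frac{n}{r}+\frac{n-1}{2}(\tilde{p_2}+\tilde{q_1})$, which is the hypothesis, completing the argument. The only point requiring a little care is the bookkeeping around $\lambda_0=0$ and the passage from $\lambda_\ell$ to $1+\lambda_\ell$ in the exponent; since these affect only finitely many terms and a bounded comparison constant, they do not disturb convergence. Thus there is no genuine obstacle here: the entire content lies in combining the diagonal symbol formula \eqref{EQ:symbol-F}, the nuclearity criterion of Corollary \ref{main12ab}, and the Weyl-type summability dichotomy of Proposition \ref{weyl2a}.
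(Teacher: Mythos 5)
Your proposal is correct and follows essentially the same route as the paper's own proof: compute the diagonal symbol $\sigma(\ell)=(1+\lambda_\ell)^{-\alpha/\nu}I_{d_\ell}$ via \eqref{EQ:symbol-F}, insert $\|\sigma(\ell)\|_{S_r}^r=d_\ell(1+\lambda_\ell)^{-\alpha r/\nu}$ into condition \eqref{EQ:cor-nuc} of Corollary \ref{main12ab}, and conclude by the summability dichotomy \eqref{asymp2} of Proposition \ref{weyl2a}. The only cosmetic difference is that the paper simply bounds $\lambda_\ell^{e}\leq C(1+\lambda_\ell)^{e}$ for the nonnegative exponent, whereas you argue via two-sided comparability for $\ell\geq 1$; both handle the $\lambda_0=0$ term the same way in substance.
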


Note that if $p_{1}=p_{2}=2$, we have $\tilde{p_{2}}=\tilde{q_{1}}=0$, and since
Schatten class $S_{r}$ and $r$-nuclear class coincide on $L^{2}(M)$,
Proposition \ref{PROP:elliptic} shows that the statement of Corollary \ref{main12a3} 
is sharp in this case of indices. However, it does depend on the bounds for
eigenvalues which can be improved in the presence of additional structures as
discussed in Remark \ref{REM:bounds}.

\begin{proof}[Proof of Corollary \ref{main12a3}]
If we denote by $\lambda_{\ell}$ the eigenvalues of $E$, for $\alpha>0$ we observe that 
$\sigma_{(I+E)^{-\frac{\alpha}{\nu}}}(\ell)=(1+\lambda_{\ell})^{-\frac{\alpha}{\nu}}I_{d_{\ell}}$. 
Then 
\[
\|\sigma_{(I+E)^{-\frac{\alpha}{\nu}}}(\ell)\|_{S_r}^r=(1+\lambda_{\ell})^{-\frac{\alpha r}{\nu}}d_{\ell}.
\] 
Now by applying Corollary \ref{main12ab} we obtain
\begin{multline*}
\sum\limits_{\ell}\|\sigma(\ell)\|_{S_r}^r\lambda_{\ell}^{{\frac{(n-1)}{2\nu}}(\tilde{p_2}+\tilde{q_1})r}
\leq
C\sum\limits_{\ell}d_{\ell}(1+\lambda_{\ell})^{-\frac{\alpha r}{\nu}}(1+\lambda_{\ell})^{(\tilde{p_2}+\tilde{q_1})\frac{(n-1)r}{2\nu}}\\
=
C\sum\limits_{\ell}d_{\ell}(1+\lambda_{\ell})^{(-\alpha+(\tilde{p_2}+\tilde{q_1})\frac{(n-1)}{2})
\frac{r}{\nu}}
<\infty,
\end{multline*}
if $q=(\alpha-(\tilde{p_2}+\tilde{q_1})\frac{(n-1)}{2})\frac{r}{\nu}>\frac{n}{\nu}$ by Proposition \ref{weyl2a}. But this is equivalent to the condition 
$\alpha>\frac{n}{r}+(\tilde{p_2}+\tilde{q_1})\frac{n-1}{2}$.
\end{proof}

\begin{rem}\label{REM:bounds}
As we pointed out in Remark \ref{eqsmg}, on compact Lie groups we can always choose 
$E$ to be a Laplacian with an
orthonormal basis
given by rescaled matrix elements of representations, for which we can take
$\Lambda(\ell;\infty)=d_{\xi_{\ell}}^{\half}=d_{\ell}^{\frac14}$. 
At the same time, if $E$ is an operator of second order
(so that $\nu=2$) the best we can hope
for on closed manifolds in general is 
$\Lambda(\ell;n,\infty)=C\lambda_{\ell}^{\frac{n-1}{4}}$ given by Lemma \ref{LEM:Horm}.
In view of \eqref{EQ:Weyl}, we always have 
$d_\ell^{\frac14}\lesssim \lambda_{\ell}^{\frac{n}{8}}$, so that this choice on compact
Lie groups is
better than the general bound $\Lambda(\ell;n,\infty)=C\lambda_{\ell}^{\frac{n-1}{4}}$ above. 
Partly, this is explained by the presence of the additional (group) structure in this
case. The other point is that there is a difference in finding $L^{\infty}$-estimates for 
elements of {\em any} 
orthonormal basis as opposed to estimates for a favourable one that may exist due
to additional assumptions or structures. However, the latter one seems to be the question
much less studied in the literature, see \cite{sogge-z:ei} or \cite{tz:bounds}
for some
partial discussions.
\end{rem}

We now give an example of the above remark
in the case of the the sphere $\mathbb{S}^3\simeq \SU2$. We consider the Laplacian 
(the Casimir element) $E=-\mathcal{L}_{\mathbb{S}^3}$. 
We will apply the condition given by Theorem \ref{main12ab} 
along with the control $\Lambda(\ell, \infty)=d_{\ell}^{\frac 14}$. 
For the symbol of $(I+E)^{-\frac{\alpha}{2}}$, 
since the eigenvalues of $I+E$ are of the form $(1+\ell)\ell$  we obtain
\begin{align*} \|\sigma_{(I+E)^{-\frac{\alpha}{2}}}(\ell)\|_{S^r}^r=
((1+\ell)\ell)^{-\frac{\alpha r}{2}}d_{\ell}
\approx ((1+\ell)\ell)^{-\frac{\alpha r}{2}}\ell^2
\approx (1+\ell^2)^{1-\frac{\alpha r}{2}}.
\end{align*} 
Therefore, using $d_{\ell}\approx \ell^{2}$, 
\begin{align*} \sum\limits_{\ell}\|\sigma_{(I+E)^{-\frac{\alpha}{2}}}(\ell)\|_{S^r}^r\Lambda(\ell, \infty)^{(\tilde{p_2}+\tilde{q_1})r}\leq&\sum\limits_{\ell}(1+\ell ^2)^{1-\frac{\alpha r}{2}}\ell^{\half(\tilde{p_2}+\tilde{q_1})r}\\
\approx&\sum\limits_{\ell}(1+\ell)^{2-{\alpha r}+\half(\tilde{p_2}+\tilde{q_1})r}.
\end{align*}
The series on the right-hand side converges if and only if $2-{\alpha r}+\half(\tilde{p_2}+\tilde{q_1})r<-1$. Thus,  
the condition \[\alpha>\frac{3}{r}+\half(\tilde{p_2}+\tilde{q_1})\]
ensures the membership of $(I+E)^{-\frac{\alpha}{2}}$ in the Schatten class of order $r$. Summarising, we have proved the following:

\begin{cor}\label{COR:su2-LapM}
If $\alpha > \frac 3r+\half(\tilde{p_2}+\tilde{q_1})$, $0<r\leq 1$, the operator
$(I-\mathcal{L}_{\mathbb{S}^3})^{-\frac{\alpha}{2}}$ is $r$-nuclear from 
$L^{p_1}(\mathbb{S}^3)$ into $L^{p_2}(\mathbb{S}^3)$.
\end{cor}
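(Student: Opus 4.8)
The plan is to obtain the statement as a direct application of Corollary \ref{main12ab} to the operator $T=(I+E)^{-\frac{\alpha}{2}}$, where $E=-\mathcal{L}_{\mathbb{S}^3}\in\Psi^{2}_{+e}(\mathbb{S}^3)$ (so $\nu=2$ and $I+E=I-\mathcal{L}_{\mathbb{S}^3}$). The key point is that, since $\mathbb{S}^3\simeq\SU2$ carries a group structure, I would not use the generic manifold control from Lemma \ref{LEM:Horm} but the sharper bound $\Lambda(\ell;\infty)=d_{\ell}^{1/4}$ coming from the matrix-coefficient basis, exactly as flagged in Remark \ref{REM:bounds}.

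First I would check that the hypotheses of Corollary \ref{main12ab} are met. As $T=F(E)$ with $F(\lambda)=(1+\lambda)^{-\alpha/2}$ is defined by the spectral calculus of the positive self-adjoint elliptic operator $E$, it is strongly invariant relative to $E$ in the sense of Theorem \ref{THM:inv}, formally self-adjoint, continuous on $\Dcal'(\mathbb{S}^3)$, and by \eqref{EQ:symbol-F} it has the diagonal symbol
\[
\sigma_{T}(\ell)=(1+\lambda_{\ell})^{-\frac{\alpha}{2}}I_{d_{\ell}}.
\]
Because on $\mathbb{S}^3\simeq\SU2$ the eigenfunctions are rescaled representation coefficients, the bound $|e_{\ell}^{k}(x)|=\sqrt{d_{\xi_{\ell}}}\,|(\xi_{\ell})_{ij}(x)|\leq d_{\xi_{\ell}}^{\half}=d_{\ell}^{1/4}$ from Remark \ref{eqsmg} shows that $\Lambda(\ell;\infty)=d_{\ell}^{1/4}$ is admissible. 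Verifying this admissibility is the only substantive input, since it replaces the weaker generic bound $C\lambda_{\ell}^{(n-1)/4}\asymp\ell$ of Lemma \ref{LEM:Horm} by the smaller $d_{\ell}^{1/4}\asymp\ell^{1/2}$.

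The remaining step, which I expect to be routine rather than a genuine obstacle, is the convergence analysis. Using that $\sigma_{T}(\ell)$ is diagonal I have $\|\sigma_{T}(\ell)\|_{S_{r}}^{r}=(1+\lambda_{\ell})^{-\frac{\alpha r}{2}}d_{\ell}$, and from the spectral data on $\mathbb{S}^3$, namely $1+\lambda_{\ell}\asymp\ell^{2}$ and $d_{\ell}\asymp\ell^{2}$, this is $\asymp(1+\ell^{2})^{1-\frac{\alpha r}{2}}$. With $\Lambda(\ell;\infty)^{(\tilde{p_2}+\tilde{q_1})r}=d_{\ell}^{\frac14(\tilde{p_2}+\tilde{q_1})r}\asymp\ell^{\frac12(\tilde{p_2}+\tilde{q_1})r}$, the series in Corollary \ref{main12ab} becomes
\[
\sum_{\ell}\|\sigma_{T}(\ell)\|_{S_{r}}^{r}\,\Lambda(\ell;\infty)^{(\tilde{p_2}+\tilde{q_1})r}
\asymp\sum_{\ell}(1+\ell)^{2-\alpha r+\frac12(\tilde{p_2}+\tilde{q_1})r},
\]
which converges precisely when the exponent is strictly below $-1$, i.e. $2-\alpha r+\frac12(\tilde{p_2}+\tilde{q_1})r<-1$. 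Rearranging gives $\alpha>\frac{3}{r}+\half(\tilde{p_2}+\tilde{q_1})$, and under this hypothesis Corollary \ref{main12ab} delivers the $r$-nuclearity of $(I-\mathcal{L}_{\mathbb{S}^3})^{-\frac{\alpha}{2}}:L^{p_1}(\mathbb{S}^3)\to L^{p_2}(\mathbb{S}^3)$, completing the argument.
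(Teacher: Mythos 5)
Your proposal is correct and follows essentially the same route as the paper: both apply Corollary \ref{main12ab} with the group-structure control $\Lambda(\ell;\infty)=d_{\ell}^{1/4}$ (rather than the generic bound of Lemma \ref{LEM:Horm}), compute $\|\sigma(\ell)\|_{S_r}^r=(1+\lambda_\ell)^{-\alpha r/2}d_\ell\approx(1+\ell^2)^{1-\alpha r/2}$ from the $\SU2$ spectral data, and reduce convergence of the resulting series to the exponent condition $2-\alpha r+\half(\tilde{p_2}+\tilde{q_1})r<-1$, i.e. $\alpha>\frac{3}{r}+\half(\tilde{p_2}+\tilde{q_1})$. Your explicit verification of the hypotheses of Corollary \ref{main12ab} (strong invariance, formal self-adjointness, diagonal symbol via \eqref{EQ:symbol-F}) is a minor elaboration of what the paper leaves implicit, not a different argument.
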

Corollary \ref{COR:su2-LapM} gives a direct proof of Corollary 3.19 in
\cite{dr13a:nuclp} which was proved there in the group setting.

\begin{rem} It is clear that the sharpness of the sufficient conditions obtained in this section depends on how sharp is the $\Lambda$ function we can choose. 
 For instance the best situation for $\Lambda(\ell,\infty)$ is when it can be chosen constant,
  i.e. when the eigenfunctions are uniformly bounded. This is the case of the torus $\tn$ and unfortunately may be essentially the only one, see \cite{tz:bounds}.
\end{rem}


\begin{thebibliography}{H{\"o}r85b}

\bibitem[Ati68]{Atiyah:global-aspects-1968}
M.~F. Atiyah.
\newblock Global aspects of the theory of elliptic differential operators.
\newblock In {\em Proc. {I}nternat. {C}ongr. {M}ath. ({M}oscow, 1966)}, pages
  57--64. Izdat. ``Mir'', Moscow, 1968.

\bibitem[BBR96]{Boggiatto-Buzano-Rodino:bk}
P.~Boggiatto, E.~Buzano, and L.~Rodino.
\newblock {\em Global hypoellipticity and spectral theory}, volume~92 of {\em
  Mathematical Research}.
\newblock Akademie Verlag, Berlin, 1996.

\bibitem[BN04]{Buzano-Nicola:2004}
E.~Buzano and F.~Nicola.
\newblock Pseudo-differential operators and {S}chatten-von {N}eumann classes.
\newblock In {\em Advances in pseudo-differential operators}, volume 155 of
  {\em Oper. Theory Adv. Appl.}, pages 117--130. Birkh{\"a}user, Basel, 2004.

\bibitem[BN07]{Buzano-Nicola:powers-hypo-JFA-2007}
E.~Buzano and F.~Nicola.
\newblock Complex powers of hypoelliptic pseudodifferential operators.
\newblock {\em J. Funct. Anal.}, 245(2):353--378, 2007.

\bibitem[BP61]{bp:mxlp}
A.~Benedek and R.~D. Panzone.
\newblock The spaces ${L}^p$, with mixed norms.
\newblock {\em Duke. Math. J.}, 28:301--324, 1961.

\bibitem[Bru68]{Bruhat:BK-Tata-1968}
F.~Bruhat.
\newblock {\em Lectures on {L}ie groups and representations of locally compact
  groups}.
\newblock Tata Institute of Fundamental Research, Bombay, 1968.
\newblock Notes by S. Ramanan, Tata Institute of Fundamental Research Lectures
  on Mathematics, No. 14.

\bibitem[BT10]{Buzano-Toft:Schatten-Weyl-JFA-2010}
E.~Buzano and J.~Toft.
\newblock Schatten-von {N}eumann properties in the {W}eyl calculus.
\newblock {\em J. Funct. Anal.}, 259(12):3080--3114, 2010.

\bibitem[Car16]{car:ex}
T.~Carleman.
\newblock \"{U}ber die {F}ourierkoeffizienten einer stetigen {F}unktion.
\newblock {\em Acta Math.}, 41(1):377--384, 1916.
\newblock Aus einem Brief an Herrn A. Wiman.

\bibitem[Cho11]{Chodosh}
O.~Chodosh.
\newblock Infinite matrix representations of isotropic pseudodifferential
  operators.
\newblock {\em Methods Appl. Anal.}, 18(4):351--371, 2011.

\bibitem[Del10]{del:tracetop}
J.~Delgado.
\newblock The trace of nuclear operators on {$L^p(\mu)$} for {$\sigma$}-finite
  {B}orel measures on second countable spaces.
\newblock {\em Integral Equations Operator Theory}, 68(1):61--74, 2010.

\bibitem[Dix77]{Dixmier:bk-Cstar-algebras}
J.~Dixmier.
\newblock {\em {$C\sp*$}-algebras}.
\newblock North-Holland Publishing Co., Amsterdam-New York-Oxford, 1977.
\newblock Translated from the French by Francis Jellett, North-Holland
  Mathematical Library, Vol. 15.

\bibitem[Dix96]{Dixmier:bk-algebras}
J.~Dixmier.
\newblock {\em Les alg{\`e}bres d'op{\'e}rateurs dans l'espace hilbertien
  (alg{\`e}bres de von {N}eumann)}.
\newblock Les Grands Classiques Gauthier-Villars. [Gauthier-Villars Great
  Classics]. {\'E}ditions Jacques Gabay, Paris, 1996.
\newblock Reprint of the second (1969) edition.

\bibitem[DR13]{dr13:schatten}
J.~Delgado and M.~Ruzhansky.
\newblock Schatten classes and traces on compact {L}ie groups.
\newblock {\em arXiv:1303.3914v1}, 2013.

\bibitem[DR14a]{Delgado-Ruzhansky:CRAS-kernels}
J.~Delgado and M.~Ruzhansky.
\newblock Kernel and symbol criteria for {S}chatten classes and
  {$r$}-nuclearity on compact manifolds.
\newblock {\em C. R. Math. Acad. Sci. Paris}, 352(10):779--784, 2014.

\bibitem[DR14b]{dr13a:nuclp}
J.~Delgado and M.~Ruzhansky.
\newblock {$L^p$}-nuclearity, traces, and {G}rothendieck-{L}idskii formula on
  compact {L}ie groups.
\newblock {\em J. Math. Pures Appl. (9)}, 102(1):153--172, 2014.

\bibitem[DR14c]{dr:suffkernel}
J.~Delgado and M.~Ruzhansky.
\newblock Schatten classes on compact manifolds: kernel conditions.
\newblock {\em J. Funct. Anal.}, 267(3):772--798, 2014.

\bibitem[Dui11]{Duis:BK-FIO-2011}
J.~J. Duistermaat.
\newblock {\em Fourier integral operators}.
\newblock Modern Birkh{\"a}user Classics. Birkh{\"a}user/Springer, New York,
  2011.
\newblock Reprint of the 1996 edition [MR1362544], based on the original
  lecture notes published in 1973 [MR0451313].

\bibitem[GK69]{gokr}
I.~C. Gohberg and M.~G. Kre{\u\i}n.
\newblock {\em Introduction to the theory of linear nonselfadjoint operators}.
\newblock Translated from the Russian by A. Feinstein. Translations of
  Mathematical Monographs, Vol. 18. American Mathematical Society, Providence,
  R.I., 1969.

\bibitem[Gro55]{gro:me}
A.~Grothendieck.
\newblock Produits tensoriels topologiques et espaces nucl{\'e}aires.
\newblock {\em Mem. Amer. Math. Soc.}, 1955(16):140, 1955.

\bibitem[GW73]{Greenfield-Wallach:hypo-TAMS-1973}
S.~J. Greenfield and N.~R. Wallach.
\newblock Remarks on global hypoellipticity.
\newblock {\em Trans. Amer. Math. Soc.}, 183:153--164, 1973.

\bibitem[H{\"o}r68]{ho:eigen1}
L.~H{\"o}rmander.
\newblock The spectral function of an elliptic operator.
\newblock {\em Acta Math.}, 121:193--218, 1968.

\bibitem[H{\"o}r85a]{ho:apde2}
L.~H{\"o}rmander.
\newblock {\em The Analysis of linear partial differential operators, vol.
  III}.
\newblock Springer-Verlag, 1985.

\bibitem[H{\"o}r85b]{ho:apde4}
L.~H{\"o}rmander.
\newblock {\em The Analysis of linear partial differential operators, vol. IV}.
\newblock Springer-Verlag, 1985.

\bibitem[HP10]{hi:pn}
A.~Hinrichs and A.~Pietsch.
\newblock {$p$}-nuclear operators in the sense of {G}rothendieck.
\newblock {\em Math. Nachr.}, 283(2):232--261, 2010.

\bibitem[K{\"o}n78]{ko:pn}
H.~K{\"o}nig.
\newblock Eigenvalues of {$p$}-nuclear operators.
\newblock In {\em Proceedings of the {I}nternational {C}onference on {O}perator
  {A}lgebras, {I}deals, and their {A}pplications in {T}heoretical {P}hysics
  ({L}eipzig, 1977)}, pages 106--113, Leipzig, 1978. Teubner.

\bibitem[NR10]{Nicola-Rodino:bk}
F.~Nicola and L.~Rodino.
\newblock {\em Global pseudo-differential calculus on {E}uclidean spaces},
  volume~4 of {\em Pseudo-Differential Operators. Theory and Applications}.
\newblock Birkh{\"a}user Verlag, Basel, 2010.

\bibitem[Olo72]{Oloff:pnorm}
R.~Oloff.
\newblock {$p$}-normierte {O}peratorenideale.
\newblock {\em Beitr{\"a}ge Anal.}, (4):105--108, 1972.
\newblock Tagungsbericht zur Ersten Tagung der WK Analysis (1970).

\bibitem[Pie84]{piet:r}
A.~Pietsch.
\newblock Grothendieck's concept of a {$p$}-nuclear operator.
\newblock {\em Integral Equations Operator Theory}, 7(2):282--284, 1984.

\bibitem[RL13]{Reinov}
O.~I. Reinov and Q.~Laif.
\newblock Grothendieck-{L}idskii theorem for subspaces of ${L}p-$spaces.
\newblock {\em Math. Nachr.}, (2--3):279--282, 2013.

\bibitem[RS75]{r-s:mp}
M.~Reed and B.~Simon.
\newblock {\em Methods of modern mathematical physics. {II}. {F}ourier
  analysis, self-adjointness}.
\newblock Academic Press [Harcourt Brace Jovanovich Publishers], New York,
  1975.

\bibitem[RS80]{r-s:vol1}
M.~Reed and B.~Simon.
\newblock {\em Methods of modern mathematical physics. {I}}.
\newblock Academic Press, Inc. [Harcourt Brace Jovanovich, Publishers], New
  York, second edition, 1980.
\newblock Functional analysis.

\bibitem[RT10]{rt:book}
M.~Ruzhansky and V.~Turunen.
\newblock {\em Pseudo-differential operators and symmetries. Background
  analysis and advanced topics}, volume~2 of {\em Pseudo-Differential
  Operators. Theory and Applications}.
\newblock Birkh{\"a}user Verlag, Basel, 2010.

\bibitem[RT13]{rt:groups}
M.~Ruzhansky and V.~Turunen.
\newblock Global quantization of pseudo-differential operators on compact {L}ie
  groups, {$\rm SU(2)$}, 3-sphere, and homogeneous spaces.
\newblock {\em Int. Math. Res. Not. IMRN}, (11):2439--2496, 2013.

\bibitem[RT15]{Ruzhansky-Tokmagambetov:IMRN}
M.~Ruzhansky and N.~Tokmagambetov.
\newblock Nonharmonic analysis of boundary value problems.
\newblock {\em Int. Math. Res. Notices, doi: 10.1093/imrn/rnv243}, 2015.

\bibitem[Rus74]{russo:lpg}
B.~Russo.
\newblock The {N}orm of the {L}p-{F}ourier {T}ransform on {U}nimodular
  {G}roups.
\newblock {\em Trans. Amer. Math. Soc.}, 192(2):293--305, 1974.

\bibitem[Sch70]{sch:id}
R.~Schatten.
\newblock {\em Norm ideals of completely continuous operators}.
\newblock Second printing. Ergebnisse der Mathematik und ihrer Grenzgebiete,
  Band 27. Springer-Verlag, Berlin, 1970.

\bibitem[See65]{see:ex}
R.~T. Seeley.
\newblock Integro-differential operators on vector bundles.
\newblock {\em Trans. Amer. Math. Soc.}, 117:167--204, 1965.

\bibitem[See67]{Seeley:complex-powers-1967}
R.~T. Seeley.
\newblock Complex powers of an elliptic operator.
\newblock In {\em Singular {I}ntegrals ({P}roc. {S}ympos. {P}ure {M}ath.,
  {C}hicago, {I}ll., 1966)}, pages 288--307. Amer. Math. Soc., Providence,
  R.I., 1967.

\bibitem[See69]{see:exp}
R.~T. Seeley.
\newblock Eigenfunction expansions of analytic functions.
\newblock {\em Proc. Amer. Math. Soc.}, 21:734--738, 1969.

\bibitem[Shu01]{shubin:r}
M.~A. Shubin.
\newblock {\em Pseudodifferential operators and spectral theory}.
\newblock Springer-Verlag, Berlin, second edition, 2001.
\newblock Translated from the 1978 Russian original by Stig I. Andersson.

\bibitem[Sim79]{sim:trace}
B.~Simon.
\newblock {\em Trace ideals and their applications}, volume~35 of {\em London
  Mathematical Society Lecture Note Series}.
\newblock Cambridge University Press, Cambridge, 1979.

\bibitem[Sob14]{sob:sch}
A.~V. Sobolev.
\newblock On the {S}chatten--von {N}eumann properties of some
  pseudo-differential operators.
\newblock {\em J. Funct. Anal.}, 266(9):5886--5911, 2014.

\bibitem[Ste70]{Stein:BOOK-topics-Littlewood-Paley}
E.~M. Stein.
\newblock {\em Topics in harmonic analysis related to the {L}ittlewood-{P}aley
  theory.}
\newblock Annals of Mathematics Studies, No. 63. Princeton University Press,
  Princeton, N.J., 1970.

\bibitem[Str72]{Strichartz:functional-calculus-AJM-1972}
R.~S. Strichartz.
\newblock A functional calculus for elliptic pseudo-differential operators.
\newblock {\em Amer. J. Math.}, 94:711--722, 1972.

\bibitem[SZ02]{sogge-z:ei}
C.~Sogge and S.~Zelditch.
\newblock {R}iemannian manifolds with maximal eigenfunction growth.
\newblock {\em Duke Math. J.}, 114(3):387--437, 2002.

\bibitem[Tof06]{Toft:Schatten-AGAG-2006}
J.~Toft.
\newblock Schatten-von {N}eumann properties in the {W}eyl calculus, and
  calculus of metrics on symplectic vector spaces.
\newblock {\em Ann. Global Anal. Geom.}, 30(2):169--209, 2006.

\bibitem[Tof08]{Toft:Schatten-modulation-2008}
J.~Toft.
\newblock Schatten properties for pseudo-differential operators on modulation
  spaces.
\newblock In {\em Pseudo-differential operators}, volume 1949 of {\em Lecture
  Notes in Math.}, pages 175--202. Springer, Berlin, 2008.

\bibitem[TZ02]{tz:bounds}
J.~Toth and S.~Zelditch.
\newblock Riemannian manifolds with uniformly bounded eigenfunctions.
\newblock {\em Duke Math. J.}, 111(1):97--132, 2002.

\end{thebibliography}

\end{document}